\newcommand{\noun}[1]{\textsc{#1}}
\numberwithin{equation}{section}
\numberwithin{figure}{section}
  \theoremstyle{plain}
  \newtheorem*{thm*}{\protect\theoremname}
  \theoremstyle{remark}
  \newtheorem{rem}{\protect\remarkname}
  \theoremstyle{definition}
  \newtheorem{defn}{\protect\definitionname}
 \theoremstyle{definition}
  \newtheorem{example}{\protect\examplename}
  \theoremstyle{plain}
  \newtheorem{lem}{\protect\lemmaname}
  \theoremstyle{plain}
  \newtheorem{cor}{\protect\corollaryname}
  \theoremstyle{remark}
  \newtheorem*{claim*}{\protect\claimname}
\theoremstyle{plain}
\newtheorem{thm}{\protect\theoremname}
  \theoremstyle{definition}
  \newtheorem{problem}{\protect\problemname}
  \providecommand{\claimname}{Claim}
  \providecommand{\definitionname}{Definition}
  \providecommand{\examplename}{Example}
  \providecommand{\lemmaname}{Lemma}
  \providecommand{\problemname}{Problem}
  \providecommand{\remarkname}{Remark}
  \providecommand{\theoremname}{Theorem}
\providecommand{\corollaryname}{Corollary}
\providecommand{\theoremname}{Theorem}
\begin{document}

\title{\textmd{\noun{TOPOLOGICAL DETECTION OF LYAPUNOV INSTABILITY}}}

\author{pedro teixeira}
\selectlanguage{english}%
\begin{abstract}
Given an arbitrary $C^{\,0}$ flow on a manifold $M$, let $\mbox{CMin}$
be the set of its compact minimal sets, endowed with the Hausdorff
metric, and $\mathcal{S}$ the subset of those that are Lyapunov stable.
A topological characterization of the interior of $\mathcal{S}$,
the set of Lyapunov stable compact minimal sets that are away from
Lyapunov unstable ones is given, together with a description of the
dynamics around it. In particular, $\mbox{int}_{H}\mathcal{S}$ is
locally a Peano continuum (Peano curve) and each of its countably
many connected components admits a complete geodesic metric.\foreignlanguage{british}{ }

\selectlanguage{british}%
This result establishes unexpected connections between the local topology
of $\mbox{CMin}$ and the dynamics of the flow, providing criteria
for the local detection of Lyapunov instability by merely looking
at the topology of $\mbox{CMin}$. For instance, if $\mbox{CMin}$
is not locally connected at $\varLambda\in\mbox{CMin}$, then every
neighbourhood of $\varLambda$ in $M$ contains Lyapunov unstable
compact minimal sets (hence, if $\mbox{CMin}$ is nowhere locally
connected, then every neighbourhood of each compact minimal set contains
infinitely many Lyapunov unstable compact minimal sets).
\end{abstract}

\keywords{\noindent Lyapunov stable, unstable, hyper-stable, minimal sets,
(generalized) Peano continuum, non-wandering flows, volume-preserving,
flows with all orbits periodic, continuous decompositions of manifolds.}

\selectlanguage{british}%

\subjclass[2000]{\noindent Primary 37C75, 37B45; Secondary 37B25, 37C10, 37C55, 54H20.}

\maketitle

\section{Introduction}

The comprehension of the dynamics around compact minimal sets plays
an important role in the study of flows on manifolds. Among the concepts
that are pertinent in this context, those of Lyapunov stability/instability
are fundamental both in the conservative and in the dissipative settings
\cite{LY,BI,M1,M2}. Detecting the occurrence of Lyapunov unstable
compact minimal sets in the neighbourhood of Lyapunov stable ones
is a relevant dynamical problem, first of all because in the presence
of the former, by an arbitrarily small perturbation of the phase space
coordinates of a point, one may pass from stable to unstable almost
periodic solutions.

The set $\mbox{CMin}$ of all compact minimal sets of a flow is naturally
endowed with the Hausdorff metric, thus becoming a metric space whose
``points'' are the compact minimal sets $\varLambda\in\mbox{CMin}$.
This is a topological invariant (topologically equivalent flows have
homeomorphic $\mbox{CMin}$'s). One the other hand, examples show
abundantly that completely distinct flows may also have homeomorphic
$\mbox{CMin}$'s. Nevertheless, it turns out that the mere inspection
of the local topology of $\mbox{CMin}$ at $\varLambda$ may reveal
unexpected information about the flow dynamics around $\varLambda$.
For instance, if $\mbox{CMin}$ is \emph{not }locally connected at
$\varLambda$, then every neighbourhood of $\varLambda$ in $M$ contains
Lyapunov unstable compact minimal sets. The same holds if $\varLambda$
has no compact neighbourhood in $\mbox{CMin}$. This follows immediately
from the fact that, on any flow, the interior of the set $\mathcal{S}$
of all Lyapunov stable compact minimal sets is an open subset of $\mbox{CMin}$
that is both locally compact and locally connected. It is this quite
exceptional topological structure that furnishes criteria permitting
to detect that a given $\varLambda\in\mbox{CMin}$ does \emph{not}
belong to $\mbox{int}_{H}\mathcal{S}$ or equivalently, that $\varLambda\in\mbox{cl}_{H}\mathcal{U}=\mbox{(int}_{H}\mathcal{S})^{c}$,%
\footnote{$\mathcal{U}$ is the set of Lyapunov unstable compact minimals sets
of the flow. %
} and it is readily seen that $\varLambda\in\mbox{cl}_{H}\mathcal{U}$
iff every neighbourhood of $\varLambda$ in $M$ contains Lyapunov
unstable compact minimal sets. Actually, $\mbox{int}_{H}\mathcal{S}$
is locally a Peano continuum, having the nice ``pre-geometric''
property of existence of a complete geodesic metric on each of its
countably many connected components (Corollary \hyperlink{Corollary 3}{3},
Remark \hyperlink{Remark 1.c}{1.c}). 

\begin{figure}
\hypertarget{Fig 1.1}{}

\begin{centering}
\includegraphics{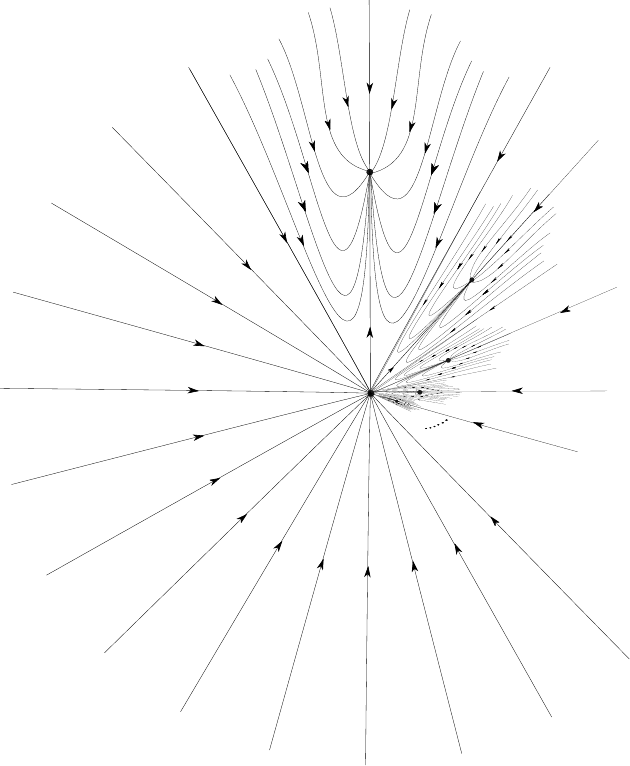}
\par\end{centering}

\caption{All equilibria, except the origin, are attractors. The $\mbox{CMin}$
of this planar flow is homeomorphic to that of (1) in Fig. 1.3.}
\end{figure}

As a practical application, imagine that without knowing a certain
flow, we are provided with a homeomorphic copy $K$ of its $\mbox{CMin}$.
If, for instance, $K$ is the Cantor star,%
\footnote{Cantor star: union of the closed radii of $\mathbb{D}^{2}$ connecting
the origin to a Cantor subset of $\mathbb{S}^{1}=\partial\mathbb{D}^{2}$.%
} then we know immediately that every neighbourhood of each compact
minimal set of the flow contains infinitely many Lyapunov unstable
compact minimal sets, and this by simply observing that $K$ is not
locally connected at a dense subset of its points. However, it is
in general impossible, by the mere inspection of $K$, to determine
\emph{which} points of $K$ correspond (under that homeomorphism)
to the Lyapunov unstable compact minimal sets we know to exist. Instead
of being a limitation, this fact is actually one of the reasons that
make these criteria interesting, for they are among the results that
somehow escape the intrinsic bounds confining the dynamical information
concerning Lyapunov stability/instability extractable from the topology
of $\mbox{CMin}$, as we now explain. To give a simpler example, consider
the $C^{\infty}$ flow on $\mathbb{R}^{2}$ pictured as (1) in Fig.
\hyperlink{Fig 1.3}{1.3} (we suppose that all orbits outside the
outer periodic orbit have that orbit as $\omega$-limit and have empty
$\alpha$-limit set; the denumerably many periodic orbits are ordered
by decreasing length as $\gamma_{n}$, $n\in\mathbb{N}$). Its $\mbox{CMin}$
is homeomorphic to $K=\{0\}\cup\{1/n:\, n\in\mathbb{N}\}$, the Lyapunov
stable equilibrium $O$ (origin) is taken to $0$ and each Lyapunov
unstable periodic orbit $\gamma_{n}$ to $1/n$. Since $K$ is not
locally connected at $0$, we know that on \emph{any} flow having
its $\mbox{CMin}$ homeomorphic to $K$, every neighbourhood of the
compact minimal set corresponding to $0$ (under that homeomorphism)
contains Lyapunov unstable compact minimal sets. Now, it is easily
seen that there is another $C^{\infty}$ flow on $\mathbb{R}^{2}$
without periodic orbits (Fig. \hyperlink{Fig 1.1}{1.1}), whose $\mbox{CMin}$
is also homeomorphic to $K$ in such a away that $0$ is taken to
the Lyapunov \emph{unstable} equilibrium $O$ and each $1/n$ is taken
to a Lyapunov \emph{stable} equilibrium (these later equilibria are
attractors i.e. asymptotically stable, see Definition \hyperlink{Definition 7}{7}).
Hence, these two flows have homeomorphic $\mbox{CMin}$'s, but under
that homeomorphism, the Lyapunov unstable compact minimal sets of
the first flow correspond to the Lyapunov stable ones of the second
and vice versa (similar examples could be given on $\mathbb{S}^{2}$).
Therefore, by the mere inspection of $K\simeq\mbox{CMin}$, it is
impossible to determine which points of $K$ correspond to the Lyapunov
unstable compact minimal sets that we know to occur in every neighbourhood
of $O$. Observe, however, that our conclusion remains intact: in
both flows every neighbourhood of the equilibrium $O$ (corresponding
to $0$ under both homeomorphisms) contains Lyapunov unstable compact
minimal sets $\varGamma$ (in the second flow, the only such $\varGamma$
is the equilibrium orbit $\{O\}$ itself).

It should be also mentioned that contrariwise, it is obviously impossible
to detect the occurrence of Lyapunov stable compact minimal sets in
a flow by merely looking at its $\mbox{CMin}$, for given any $C^{\,0\leq r\leq\infty,\omega}$
flow on a manifold $M$, there is a $C^{r}$ flow on $M\times\mathbb{S}^{1}$
with all compact minimal sets Lyapunov unstable, whose $\mbox{CMin}$
is isometric to that of the original flow on $M$. Actually, the above
mentioned topological characterization of $\mbox{int}_{H}\mathcal{S}$
is quite exceptional, for among the subsets of $\mbox{CMin}$ directly
related to its partition into Lyapunov stable and Lyapunov unstable
compact minimal sets ($\mbox{CMin}=\mathcal{S\,}\sqcup\mathcal{\, U}$),
only $\mbox{int}_{H}\mathcal{S}$ has a nice local topology (even
inducing a pre-geometric structure).%
\footnote{For instance, every $\mathbb{S}^{n\geq1}$ carries a $C^{\infty}$
flow with $\mathcal{S}$ and $\mathcal{U}$ both nowhere locally compact
and nowhere locally connected (always in relation to the Hausdorff
metric). It is also easily seen that every $n$-dimensional compact
metric space $K$ (even if nowhere locally connected) is homeomorphic
to the set $\mathcal{S}$ of all Lyapunov stable compact minimals
of some $C^{\infty}$ flow on $\mathbb{R}^{2n+2}$. The same is true
for $\mbox{bd}_{H}\mathcal{S}$, $\mathcal{S}\cap\mbox{bd}_{H}\mathcal{S}$,
$\mbox{cl}_{H}\mathcal{S}$, $\mbox{\ensuremath{\big(}cl}_{H}\mathcal{S}\big)\setminus\mathcal{U}$,
$\mathcal{U}$, $\mbox{int}_{H}\mathcal{U},$ $\mathcal{U}\cap\mbox{bd}_{H}\mathcal{U}$,
$\mbox{cl}_{H}\mathcal{\mathcal{U}}$, $\mbox{\ensuremath{\big(}cl}_{H}\mathcal{\mathcal{U}}\big)\setminus\mathcal{S}$
(obviously, $\mbox{int}_{H}$, $\mbox{bd}_{H}$, $\mbox{cl}_{H}$
stand for interior boundary and closure of subsets of $\mbox{CMin }$in
relation to the Hausdorff metric and $\mathcal{U}=\mathcal{S}^{c}:=\mbox{CMin\ensuremath{\setminus}}\mathcal{S}$).
These examples already give an idea of how topologically arbitrary
these sets may be in comparison with $\mbox{int}_{H}\mathcal{S}$.%
} 

This work continues the line of research initiated in \cite{TE} aiming
to illuminate the connections between the local topology of $\mbox{CMin}$
\foreignlanguage{english}{and the dynamics of the flow, here focused
in Lyapunov stability/instability phenomena. In the conservative setting
it gives, for instance, a somewhat purely topological counterpart
to certain well known dynamical facts, usually detected by analytic
methods in higher regularity (for instance, the occurrence of Lyapunov
unstable periodic orbits arbitrarily close to a generic elliptic periodic
orbit in dimension 3 (``\emph{...de sorte qu'il est par essence impossible
de séparer les processus stables et instables'' }in Zehnder \cite{Z2},
see also \cite{Z1,NE,M1} and} Fig. 8.3-3 ``VAK'' in \cite[p.585]{AB}).
Based on the local topological characterization of $\mbox{int}_{H}\mathcal{S}$
mentioned above, the following result establishes a conservative setting
picture of the possible interplay between Lyapunov stability and instability
in the neighbourhood of an arbitrary compact minimal set:

\begin{figure}
\begin{centering}
\includegraphics[scale=0.8]{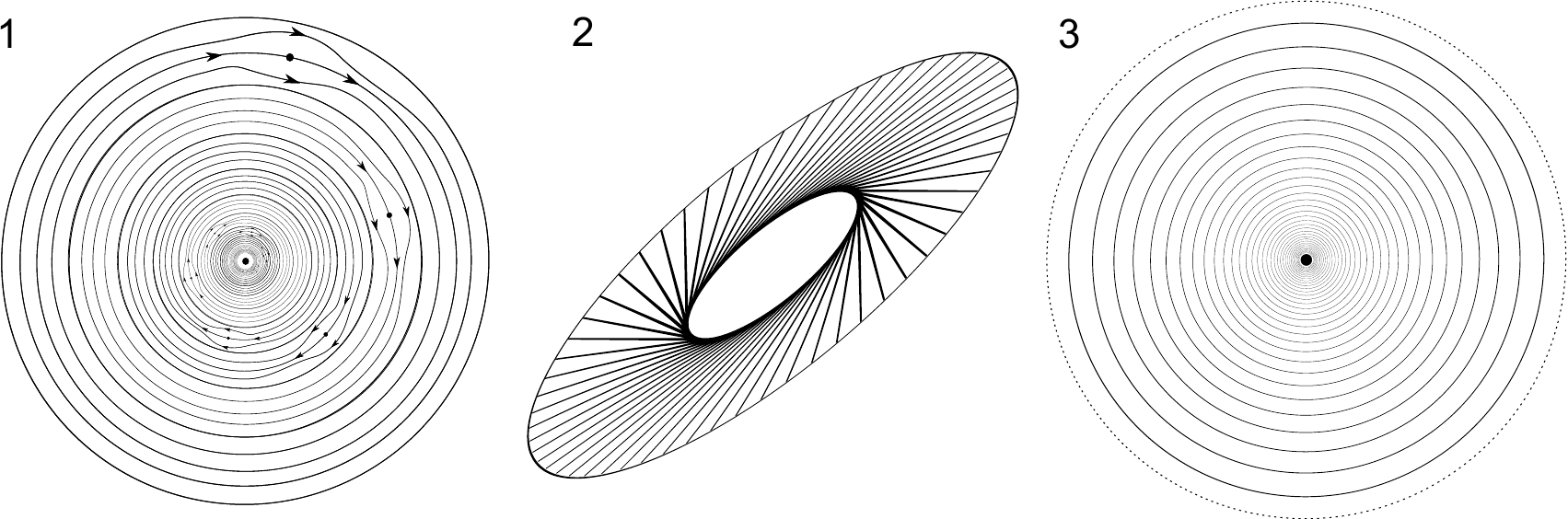}
\par\end{centering}

\caption{{\small{Theorem A, cases 1, 2 and 3.}}}
\end{figure}

\begin{thm*}
\textbf{\emph{(A) }}Let $\varLambda$ be a compact minimal set of
a $C^{0}$ non-wandering flow on a connected manifold $M$. Then,
either
\begin{enumerate}
\item there is a sequence of Lyapunov unstable compact minimal sets $\varLambda_{n}$
converging to $\varLambda$ in the Hausdorff metric, or
\item $\varLambda=M$ i.e. $M$ is compact and the flow is minimal, or
\item there are arbitrarily small compact, connected, invariant neighbourhoods
$U$ of $\varLambda$ in $M$ such that:

\begin{enumerate}
\item $U$ is the union of $\mathfrak{c}=|\mathbb{R}|$ Lyapunov stable
(compact) minimal sets $\varLambda_{i\in\mathbb{R}}$;
\item endowed with the Hausdorff metric, the set $\big\{\varLambda_{i\in\mathbb{R}}\big\}$
is a Peano continuum (Peano curve).
\end{enumerate}
\end{enumerate}
\end{thm*}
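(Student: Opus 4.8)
The plan is to split on whether $\Lambda\in\mathrm{cl}_{H}\mathcal{U}=(\mathrm{int}_{H}\mathcal{S})^{c}$ or $\Lambda\in\mathrm{int}_{H}\mathcal{S}$. If $\Lambda\in\mathrm{cl}_{H}\mathcal{U}$, then by the very definition of closure in the Hausdorff metric there is a sequence of Lyapunov unstable compact minimal sets $\Lambda_{n}$ with $\Lambda_{n}\to\Lambda$, which is exactly alternative (1). Thus it remains to treat the case $\Lambda\in\mathrm{int}_{H}\mathcal{S}$, i.e. $\Lambda$ hyper-stable, and to show that it yields alternative (2) or (3).

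So assume $\Lambda\in\mathrm{int}_{H}\mathcal{S}$. Using the local structure of $\mathrm{int}_{H}\mathcal{S}$ already established (it is open, locally compact, locally connected, and locally a Peano continuum by Corollary 3 and Remark 1.c), $\Lambda$ has arbitrarily small neighbourhoods $\mathcal{P}\subseteq\mathrm{int}_{H}\mathcal{S}$ that are Peano continua, every point of $\mathcal{P}$ being a Lyapunov stable compact minimal set within a prescribed Hausdorff distance $\delta$ of $\Lambda$. Fixing such a $\mathcal{P}$, I would put $U=\bigcup_{\Gamma\in\mathcal{P}}\Gamma$. Several of the required properties are then formal: $U$ is invariant (each $\Gamma$ is), compact (a union of a Hausdorff-compact family of compacta), and contained in the $\delta$-ball $B_{M}(\Lambda,\delta)$, so that shrinking $\mathcal{P}$ toward $\{\Lambda\}$ makes $U$ arbitrarily small. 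Connectedness I would deduce from the arcwise connectedness of the Peano continuum $\mathcal{P}$ together with the connectedness of each $\Gamma$ (a compact minimal set is the closure of a single orbit, hence connected) and the upper semicontinuity of the decomposition $\{\Gamma\}_{\Gamma\in\mathcal{P}}$: the union of the minimal sets along an arc in $\mathcal{P}$ joining $\Lambda$ to an arbitrary $\Gamma$ is a continuum, placing every $\Gamma$ in the connected component of $\Lambda$. For the count in (3)(a), a nondegenerate Peano continuum contains an arc and lies in the separable space $\mathrm{CMin}$, so it has cardinality exactly $\mathfrak{c}$; and (3)(b) is the choice of $\mathcal{P}$ itself.

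The crux — and the only place the non-wandering hypothesis enters — is to show that $U$ is genuinely a \emph{neighbourhood} of $\Lambda$ in $M$, equivalently that the Lyapunov stable minimal sets near $\Lambda$ cover a full $M$-neighbourhood. Here I would first use Lyapunov stability to produce arbitrarily small positively invariant open neighbourhoods of $\Lambda$, and then the non-wandering hypothesis to forbid proper attraction and thereby render the stability two-sided, yielding a compact \emph{invariant} neighbourhood $N\subseteq B_{M}(\Lambda,\delta)$. On $N$ the dynamical description accompanying the characterization of $\mathrm{int}_{H}\mathcal{S}$ (the continuous decomposition of a neighbourhood of a hyper-stable set into minimal sets) shows that every $x\in N$ lies in a compact minimal set $\Gamma_{x}\subseteq N$; being $\delta$-close to $\Lambda$, each $\Gamma_{x}$ is Lyapunov stable, so $N\subseteq U$ and $U$ is a neighbourhood of $\Lambda$. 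I expect this covering step to be the real difficulty, since it is precisely where neutral, conservative-type stability must be distinguished from asymptotic stability; everything else is topological bookkeeping.

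Finally I would separate (2) from (3) according to whether $\Lambda$ is isolated in $\mathrm{CMin}$. If it is, then all the $\Gamma_{x}$ above coincide with $\Lambda$, forcing $N\subseteq\Lambda$; then $\Lambda$ is open in $M$, and being also compact with $M$ connected, $\Lambda=M$, which is alternative (2). If $\Lambda$ is not isolated, the neighbourhoods $\mathcal{P}$ are nondegenerate Peano continua and the construction above delivers, for arbitrarily small $U$, exactly the conclusions (3)(a) and (3)(b).
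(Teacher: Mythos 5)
Your proposal is correct and its core is the same as the paper's: the trichotomy is split via $\varLambda\in\mbox{cl}_{H}\mathcal{U}$ versus $\varLambda\in H\mathcal{S}$, and in the hyper-stable case one takes a Peano-continuum $d_{H}$-neighbourhood $\mathcal{P}\subset H\mathcal{S}$ of $\varLambda$ (Corollary 3, with Lemmas 1 and 7 to transfer between $M$-neighbourhoods and $d_{H}$-neighbourhoods) and forms the union of its members. The organizational difference is where the non-wandering hypothesis is spent. The paper first proves the general-flow statement (Theorem 2) with $U:=V\cup\mathfrak{N}^{*}$, where $V$ is the $(+)$invariant neighbourhood supplied by Lemma 1, so that $U$ is a neighbourhood \emph{by construction}; non-wandering enters only afterwards (Lemma 2.4) to upgrade $U$ to the invariant set $H\mathcal{S}(U)^{*}$. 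You instead take $U=\mathcal{P}^{*}$ alone, which is a union of stable minimals by construction, and spend non-wandering on the converse task: bi-stability yields a compact \emph{invariant} neighbourhood $N$, and the dynamics force $N\subset\mathcal{P}^{*}$, so $U$ is a neighbourhood. The two readings are dual, and yours has the small aesthetic advantage that $\mbox{CMin}(U)=\mathcal{P}$ exactly (distinct minimals are disjoint), while avoiding the detour through the general Theorem B; the price is that the covering step must be argued from scratch, and your handling of the isolated case ($N\subseteq\varLambda$ open and compact in connected $M$, hence $\varLambda=M$) likewise replaces the paper's route through the non-existence of proper attractors in non-wandering flows.

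Two spots need tightening. First, ``the continuous decomposition of a neighbourhood of a hyper-stable set into minimal sets'' is not a citable general-flow fact: for flows with proper attractors it is false, and in the non-wandering setting it is essentially the statement under proof. What is actually available is Lemma 3.1 (for compact, $(+)$invariant $N$ with $\mbox{CMin}(N)\subset H\mathcal{S}$, every $x\in N$ has $\omega(x)$ a stable minimal set contained in $N$), after which invariance of $N$ together with Lemma 2.4 (or Lemma 2.3 applied to $\alpha(x)\subset N$) forces $x\in\omega(x)$; this is the derivation your sentence must be replaced by. Second, ``being $\delta$-close to $\varLambda$, each $\varGamma_{x}$ is Lyapunov stable'' is not the property you need: stability alone does not place $\varGamma_{x}$ in $\mathcal{P}$. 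What does is that $\varGamma_{x}\subset N$ with $N$ a sufficiently small $M$-neighbourhood of $\varLambda$ chosen inside a Lemma-1 neighbourhood, so that $\varGamma_{x}\in H\mathcal{S}$ and $\varGamma_{x}$ is $d_{H}$-close to $\varLambda$ (Lemma 6), hence $\varGamma_{x}\in H\mathcal{S}(N)\subset\mathcal{P}$ (Lemma 7). With these two substitutions your argument is complete.
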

For general $C^{0}$ flows (not necessarily non-wandering), proper
attractors may show up and we have the following analogue local characterization
(Section \hyperlink{Section 4.1.2}{4.1.2}, Theorem \hyperlink{Theorem 2}{2}
and Fig. \hyperlink{Fig 1.3}{1.3}):

\begin{thm*}
\textbf{\emph{(B)}} Let $\varLambda$ be a compact minimal set of
a $C^{0}$ flow on a connected manifold $M$. Then, either
\begin{enumerate}
\item there is a sequence of Lyapunov unstable compact minimal sets $\varLambda_{n}$
converging to $\varLambda$ in the Hausdorff metric, or
\item \emph{$\varLambda$ }is an attractor i.e. asymptotically stable, or 
\item there are arbitrarily small compact, connected, (+)invariant neighbourhoods
$U$ of $\varLambda$ in $M$ such that:

\begin{enumerate}
\item the (compact) minimal sets contained in $U$ are all Lyapunov stable
and, endowed with the Hausdorff metric, their set is a Peano continuum
with $\mathfrak{c}$ elements;
\item for each $x\in U$, $\omega(x)$ is a (Lyapunov stable compact) minimal
set contained in $U$ and if $x\not\in\omega(x)$, then its negative
orbit leaves $U$ (and thus never returns again).
\end{enumerate}
\end{enumerate}
\end{thm*}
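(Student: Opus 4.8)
The plan is to base the whole argument on the partition $\mbox{CMin}=\mathcal{S}\sqcup\mathcal{U}$ together with the already-established fact that $\mbox{int}_{H}\mathcal{S}$ is open, which yields the clean dichotomy $\mbox{CMin}=\mbox{int}_{H}\mathcal{S}\sqcup\mbox{cl}_{H}\mathcal{U}$. If $\varLambda\in\mbox{cl}_{H}\mathcal{U}$ then, by the definition of the Hausdorff closure, there is a sequence of Lyapunov unstable compact minimal sets converging to $\varLambda$, which is exactly alternative (1). I therefore assume from the outset that $\varLambda\in\mbox{int}_{H}\mathcal{S}$ and aim at (2) or (3); this already makes $\varLambda$ Lyapunov stable and furnishes a whole Hausdorff-neighbourhood of $\varLambda$ in $\mbox{CMin}$ consisting of Lyapunov stable compact minimal sets.

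I would then exploit the exceptional local structure of $\mbox{int}_{H}\mathcal{S}$: being open, locally compact and locally connected, $\varLambda$ has arbitrarily small compact connected Hausdorff-neighbourhoods $\mathcal{N}\subseteq\mbox{int}_{H}\mathcal{S}$, each a Peano continuum. The proof splits according to whether some such $\mathcal{N}$ reduces to the singleton $\{\varLambda\}$, i.e. whether $\varLambda$ is an isolated point of $\mbox{CMin}$. Assume it is. The decisive sub-step is a no-satellite lemma: Lyapunov stability confines every minimal set meeting a small neighbourhood of $\varLambda$ to a thin tube $N_{\epsilon}(\varLambda)$, and the flow-box picture at the regular points of $\varLambda$ forces any minimal set threading such a tube to shadow the whole of $\varLambda$, hence to lie Hausdorff-close to it; by isolation there is no such set other than $\varLambda$, so a small neighbourhood of $\varLambda$ in $M$ carries no compact minimal set besides $\varLambda$ itself. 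Consequently, for $x$ near $\varLambda$ the forward orbit remains in the tube, so $\omega(x)$ is a nonempty compact invariant subset of $N_{\epsilon}(\varLambda)$ whose only minimal subset is $\varLambda$; a backward-shadowing argument (each $y\in\omega(x)$ has $\varLambda\subseteq\alpha(y)$, so Lyapunov stability squeezes $y$ into $\varLambda$) yields $\omega(x)=\varLambda$. Thus $\varLambda$ is an attractor, which is alternative (2).

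If $\varLambda$ is not isolated, every sufficiently small $\mathcal{N}$ is a non-degenerate Peano continuum, hence contains an arc and so has exactly $\mathfrak{c}$ elements, all Lyapunov stable. To produce the neighbourhood of (3), I would use Lyapunov stability to pick a compact connected (+)invariant $M$-neighbourhood $U$ of $\varLambda$ lying in a tube $N_{\epsilon}(\varLambda)$ so thin that, by the same shadowing mechanism, every minimal set meeting $U$ is Hausdorff-close to $\varLambda$ and hence belongs to $\mathcal{N}$; conversely each member of $\mathcal{N}$, being $M$-close to $\varLambda$ and Lyapunov stable, stays in the tube and so lies in $U$. This identifies the minimal sets contained in $U$ with $\mathcal{N}$, giving (3a). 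For the dynamics in (3b), boundedness of the forward orbit makes $\omega(x)$ a connected compact invariant set; backward-shadowing plus Lyapunov stability of the members puts $\omega(x)$ inside the union of the (pairwise disjoint, hence mutually separated) minimal sets of $\mathcal{N}$, so connectedness pins it to a single $\varGamma\in\mathcal{N}$. Finally, if $x\notin\omega(x)$ while its negative orbit stayed in $U$, then $\alpha(x)$ would meet some stable $\varGamma'\in\mathcal{N}$; applying Lyapunov stability of $\varGamma'$ to a point of the backward orbit lying near $\varGamma'$ would trap the entire orbit of $x$ in every neighbourhood of $\varGamma'$, forcing $x\in\varGamma'=\omega(x)$, a contradiction. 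Hence the negative orbit leaves $U$.

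The hard part is the passage from the purely metric data of $\mbox{CMin}$ to this dynamical picture in $M$: one must coordinate the thinness of the tube, the Hausdorff-radius defining $\mathcal{N}$, and the (+)invariant neighbourhood so that $U$ is a genuine neighbourhood of $\varLambda$ whose internal recurrence is \emph{exactly} the Peano continuum $\mathcal{N}$---with the family $\{\varGamma\}_{\varGamma\in\mathcal{N}}$ forming a continuous (upper semicontinuous) decomposition and no extra minimal set intruding. This is precisely where the hyper-stability of the members of $\mbox{int}_{H}\mathcal{S}$ and the continuous-decomposition machinery enter, and it is the step I expect to absorb most of the work.
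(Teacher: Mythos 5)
Your skeleton (the dichotomy $\mbox{CMin}=\mbox{int}_{H}\mathcal{S}\sqcup\mbox{cl}_{H}\mathcal{U}$, the isolated/non-isolated split, and the stability-squeezing arguments) agrees with the paper's, and your attractor case is essentially correct. But the decisive step of alternative (3) is genuinely missing, not merely deferred: you want a compact, connected, (+)invariant neighbourhood $U$ of $\varLambda$ with $\mbox{CMin}(U)$ \emph{exactly equal} to a prescribed non-degenerate Peano continuum $\mathcal{N}\subset\mbox{int}_{H}\mathcal{S}$, and your tube mechanism cannot deliver both inclusions at once. If $U$ lies in an $\epsilon$-tube around $\varLambda$, then by the very shadowing you invoke every minimal set contained in $U$ is within $d_{H}$-distance $h(\epsilon)$ of $\varLambda$, where $h(\epsilon)\to0$ as $\epsilon\to0$; since $\mathcal{N}$ is connected and non-degenerate, the continuous function $\varGamma\mapsto d_{H}(\varGamma,\varLambda)$ takes on $\mathcal{N}$ every value between $0$ and some $r\geq\mbox{diam}_{H}\mathcal{N}/2$, so as soon as $h(\epsilon)<r$ there are members of $\mathcal{N}$ whose $d_{H}$-distance to $\varLambda$ exceeds $h(\epsilon)$; such members cannot be contained in the tube, hence not in $U$. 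Thus ``every minimal in $U$ lies in $\mathcal{N}$'' and ``every member of $\mathcal{N}$ lies in $U$'' are incompatible for tube-confined $U$, which is exactly the circularity you flag at the end; and replacing $\mathcal{N}$ by $\mbox{CMin}(U)$ does not help, since that set is only known to be a continuum (Lemma 3.4), not a Peano continuum --- the obstruction the paper discusses before Lemma 5. The paper's resolution is a short trick absent from your proposal: take $V$ compact, connected, (+)invariant with only the \emph{one-sided} property $\mbox{CMin}(V)\subset\mathcal{N}$ (Lemmas 1 and 7), check that $\mathcal{N}^{*}=\cup_{\varGamma\in\mathcal{N}}\varGamma$ is compact, connected and invariant, and set $U:=V\cup\mathcal{N}^{*}$. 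This $U$ is not confined to any thin tube, yet $\mbox{CMin}(U)=\mathcal{N}$ holds automatically: a minimal set in $U$ either meets $\mathcal{N}^{*}$, and then equals a member of $\mathcal{N}$ because distinct minimal sets are disjoint, or lies in $V$. No ``continuous-decomposition machinery'' is needed.

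A second, smaller error: in (3b) you place $\omega(x)$ inside $\mathcal{N}^{*}$ and then argue that, the members of $\mathcal{N}$ being ``pairwise disjoint, hence mutually separated'', connectedness pins $\omega(x)$ to a single member. Pairwise disjointness of uncountably many compacta separates nothing, and the inference is refuted precisely in the situation at hand: the paper proves that $\mathcal{N}^{*}$ is itself \emph{connected}, although it is the disjoint union of $\mathfrak{c}$ minimal sets. The correct route is the instability of proper compact invariant subsets of limit sets (Lemma 2.2): once some minimal $\varGamma\subset\omega(x)$, if $\varGamma\subsetneq\omega(x)$ then the forward orbit of $x$ comes arbitrarily close to $\varGamma$ and later returns near points of $\omega(x)\setminus\varGamma$, so $\varGamma$ would be unstable, a contradiction; hence $\omega(x)=\varGamma$. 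Note also that your opening step --- arbitrarily small compact connected $d_{H}$-neighbourhoods of $\varLambda$ that are Peano continua --- does not follow formally from openness plus local compactness plus local connectedness (compact connected neighbourhoods in such a space need not be locally connected); it is the content of the paper's Lemma 5 and Corollary 3, which you may cite but should not treat as automatic.
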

\begin{figure}
\hypertarget{Fig 1.3}{}

\begin{centering}
\includegraphics[scale=0.9]{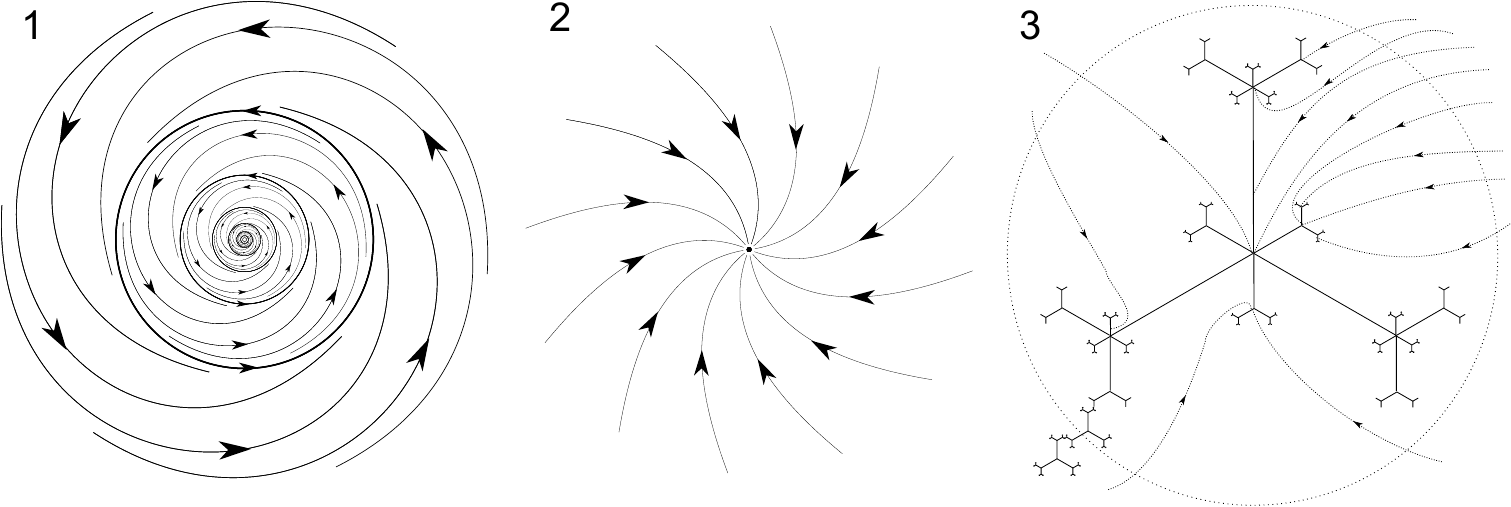}
\par\end{centering}

\caption{{\small{Theorem B, cases 1, 2 and 3.}}}
\end{figure}

This result shows that, if a compact minimal set $\varLambda$ is
away from Lyapunov unstable ones, then the dynamics around it is reasonably
well understood and the set of Lyapunov stable compact minimals near
$\varLambda$ has a remarkable topological structure (in relation
to the Hausdorff metric $d_{H}$), exactly as in the non-wandering
case (Theorem A). 

Although we have assumed the phase space $M$ to be a connected manifold,
these results still hold under much weaker hypothesis: it is enough
to suppose that $M$ is a generalized Peano continuum i.e. a locally
compact, connected and locally connected metric space (see Remark
\hyperlink{Remark 1.c}{1.c}). What seems remarkable is that \emph{this
topological structure of the phase space is actually completely inherited
by each component of} \foreignlanguage{english}{$H\mathcal{S}:=\mbox{int}_{H}\mathcal{S}$,
the set of Lyapunov stable compact minimal sets that are away (in
the Hausdorff metric) from Lyapunov unstable ones (see Definitions
}\hyperlink{Definition 3}{3}\foreignlanguage{english}{ and }\hyperlink{Definition 4}{4}\foreignlanguage{english}{).
Globally, what happens, for arbitrarily flows, is roughly the following
(Theorem }\hyperlink{Theorem 1}{1}):
\begin{enumerate}
\item $H\mathcal{S}$\emph{ }has countably many components $X_{i}$ (possibly
none), each $X_{i}$ being a clopen generalized Peano continuum (\emph{$H\mathcal{S}$
}is endowed with the Hausdorff metric);
\item the union $X_{i}^{*}\subset M$ of the (Lyapunov stable) compact minimal
sets $\varLambda\in X_{i}$ is contained in the ``basin'' $A_{i}$,
a connected, open invariant subset of $M$, consisting of all points
that have some $\varLambda\in X_{i}$ as $\omega$-limit set. Although
$X_{i}^{*}$ may be noncompact, it roughly acts as an attractor with
basin $A_{i}$ in the flow (see Fig. \hyperlink{Fig 1.4}{1.4});
\item if $x\in A_{i}$ but $x\not\in\omega(x)$, then\emph{ }$\alpha(x)\subset\mbox{bd\,}A_{i}$.
\end{enumerate}
\begin{figure}
\hypertarget{Fig 1.4}{}

\begin{centering}
\includegraphics[scale=1.3]{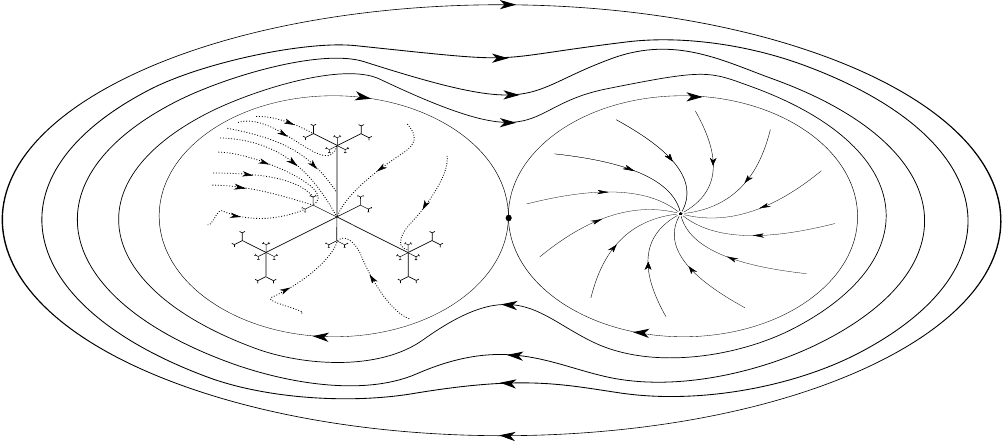}
\par\end{centering}

\caption{{\small{Flow on $\mathbb{S}^{2}$. $H\mathcal{S}$ has three components.}}}
\end{figure}

This gives a fairly complete description of the dynamics within and
around each $X_{i}^{*}$. If the flow is non-wandering but not minimal,
then $X_{i}^{*}=A_{i}$ (Theorem \hyperlink{Theorem 3}{3}), i.e.
the union of the compact minimal sets belonging to a component $X_{i}$
of $H\mathcal{S}$ is a (nonvoid) connected, open invariant set $A_{i}\subset M$
and the local density of these minimal sets is actually $\mathfrak{c}$
all over $A_{i}$: every open set $B\subset A_{i}$ intersects $\mathfrak{c}$
(Lyapunov stable) compact minimal sets $\varLambda\in X_{i}$.

The proofs explore the local topology of the phase space $M$, together
with the specific dynamical constraints of the flow near $\varLambda\in H\mathcal{S}$
and the fact that these minimal sets $\varLambda$ are continua (see
Remark \hyperlink{Remark 1.a}{1.a}), to ``move'' the local topological
structure of $M$ to the components of $H\mathcal{S}$. A kind of
duality builds up between these two topologies, Lemma \hyperlink{Lemma 3}{3}
being a simple ``show-case'' of the techniques that enable this
``crossing of the bridge''. Showing that $H\mathcal{S}$ is locally
a Peano continuum (Corollary \hyperlink{Corollary 3}{3}) requires
a fundamental result from Peano continuum theory (see the proof of
Lemma \hyperlink{Lemma 5}{5}), and seems hard to establish otherwise.

This paper is organized as follows: Section \hyperlink{Section 2}{2}
introduces the general setting of the whole work and the main concepts,
Lyapunov stability and hyper-stability (\hyperlink{Section 2.1}{2.1},
\hyperlink{Section 2.2}{2.2} and \hyperlink{Section 2.3}{2.3}).
Examples illustrating the dynamical significance of both notions are
given in \hyperlink{Section 2.4}{2.4} and \hyperlink{Section 2.5}{2.5},
the later section being entirely devoted to flows with all orbits
periodic. Section \hyperlink{Section 3}{3} introduces the main tools
and the first dynamical consequences of the topological characterization
of $H\mathcal{S}$ there obtained, criteria for the local detection
of Lyapunov unstable compact minimal sets being given in \hyperlink{Sect 3.1}{3.1}.
Section \hyperlink{Section 4}{4} contains the main results, giving
a reasonable global and local characterization of $H\mathcal{S}$
and of the dynamics around it. The analogue characterizations, specific
to the non-wandering context, are obtained in \hyperlink{Section 4.2}{4.2}.
Section \hyperlink{Section 5}{5} shows how global absence of Lyapunov
unstable compact minimals imposes strong dynamical constraints on
the flow, if the phase space is compact (these constraints vanish
in the noncompact setting). Directly related to this phenomenon are
the continuous decompositions of closed manifolds into closed submanifolds.
The natural question of the existence of a manifold structure in the
associated quotient space (endowed with the Hausdorff metric) is briefly
discussed in \hyperlink{Section 5.2}{5.2}. Finally, Section \hyperlink{Section 6}{6}
shows that the topological characterization of $H\mathcal{S}$ obtained
in Corollary \hyperlink{Corollary 3}{3} (Section \hyperlink{Section 3}{3})
is optimal in the context under consideration and ends pointing to
evidence showing that intricate (generalized) Peano continua indeed
appear as the $H\mathcal{S}$ sets of smooth flows on manifolds. Sections
\hyperlink{Section 2.4}{2.4}, \hyperlink{Section 2.5}{2.5}, \hyperlink{Section 4.1.3}{4.1.3},
\hyperlink{Section 5}{5} and \hyperlink{Section 6}{6} are unessential
and may be skipped by any reader seeking a ``straight to the core''
approach. However, the first paragraph of Section \hyperlink{Section 6}{6}
and Problem \hyperlink{Hypothesis}{2} in the same section are important
to gain perspective of the significance of the main result (Theorem
\hyperlink{Theorem 1}{1}).

\hypertarget{Section 2}{}

\section{Lyapunov stability versus instability}

\hypertarget{Section 2.1}{}

\subsection{General setting}

Throughout this paper, deductions are purely topological, all results
being valid for flows on much larger classes of phase spaces than
those of manifolds. We now establish the general context of this work.
This amounts to a minimum of hypothesis needed to deduce all the results
in the paper.\medskip{}

\begin{description}
\item [{\noun{Convention}}] Except if otherwise mentioned, we will be considering
an \emph{arbitrary continuous ($C^{0}$) flow $\theta$ on a} \emph{locally
compact, connected and locally connected metric space }$M$. Such
an $M$ is called a \emph{generalized Peano continuum} (a \emph{Peano
continuum}, if compact). $M$ is \emph{non-degenerate }if $|M|>1$,
thus implying $|M|=\mathfrak{c}=|\mathbb{R}|$.
\end{description}
($M$ is connected ($\implies|M|\geq\mathfrak{c}$) and locally compact,
hence \cite[p.269]{KO} separable ($\implies|M|\leq\mathfrak{c})$,
therefore $|M|=\mathfrak{c}$).

\medskip{}

Assuming the phase space of the flow to be a generalized Peano continuum
instead of a manifold has obvious advantages, even in the differentiable
setting: the results may be applied to subflows on arbitrary (e.g.
non-manifold) connected, closed invariant subsets, provided these
are locally connected. Obviously, if, instead, these invariant subsets
are open, then only connectedness is required. On the other hand,
this more general setting helps to get rid of the additional manifold
structure that is irrelevant for the comprehension of the phenomena
under consideration, attention becoming exclusively focused on the
topological factors that are determinant to the process.
\begin{rem}
\hypertarget{Remark 1.a}{}(a) a \emph{continuum }is a compact and
connected metric space. Compact minimal sets are continua.

(b) Peano continua (or \emph{Peano curves}) are the continuous images
of $\mathbb{D}^{1}=[-1,1]$ into Hausdorff spaces (Hahn-Mazurkiewicz
Theorem). 

\hypertarget{Remark 1.c}{}(c) connected manifolds%
\footnote{An $n$-\emph{manifold} (briefly, a \emph{manifold}) is a separable
metric space, locally homeomorphic to the $n$-cell $\mathbb{D}^{n}=\{x\in\mathbb{R}^{n}:\,|x|\leq1\}$.
Thus, manifolds are 2nd countable and Hausdorff (but possibly disconnected,
noncompact, with boundary). As usual, \emph{closed manifold} means
compact and boundaryless. Except if otherwise mentioned, \emph{all
manifolds considered are assumed to be connected}.%
} (not necessarily compact or boundaryless) are generalized Peano continua
and the later share important topological properties with the former:
they are locally compact, \emph{separable, arcwise connected, locally
arcwise connected }\cite[p.131-132]{NA} and \emph{admit a complete
geodesic metric} (Tominaga and Tanaka \cite{TO}, following Bing \cite{B1,B2}).
This implies the existence of an equivalent metric $d$ for which:
(1) every two points are joined through a geodesic arc (given any
$x,y\in M$, there is an isometric embedding $\varphi:[0,d(x,y)]\hookrightarrow M$
with $\varphi(0)=x$ and $\varphi(d(x,y))=y$); (2) closed balls are
compact (Hopf-Rinow-Cohn-Vossen Theorem \cite[p.51]{BU}). Generalized
Peano continua are at the threshold of metric geometry. It is conjectured
(Busemann \cite{BN}), that if every geodesic in an $n$-dimensional
generalized Peano continuum $M$ is locally prolongable and prolongations
are unique, then $M$ is a (boundaryless) $n$-manifold. This is confirmed
in dimension $n\leq4$ and (apparently) open in higher dimensions
(see \cite{HR,BR}). However, even $\mathbb{R}^{2}$ embeddable (generalized)
Peano continua may display intricate fractal-like geometry and topology
(as geodesics can have multiple prolongations, see e.g. \cite[Chap.1]{CH},\cite[Chap.9]{SA}
for examples).\end{rem}
\begin{defn}
(\emph{neighbourhoods and distance in $M$}) $[M,d]$ denotes the
space $M$ with metric $d$. Given $x\in M$ (resp. $Y\subset M$),
$\mathcal{N}_{x}$ (resp. $\mathcal{N}_{Y}$) is the set of neighbourhoods
of $x$ (resp. of $Y$) in $M$. For $X,\, Y\subset M$, $d(X,Y):=\mbox{inf}\{d(x,y):\, x\in X,\, y\in Y\}$.
\end{defn}

The classical concept of Lyapunov stability of compact invariant sets
is now introduced. It is crucial to understand the much stronger dynamical
constraints this notion imposes in the non-wandering setting. Given
the relevance of non-wandering flows commanded by conservative dynamics,
we shall deduce, throughout this paper, from the general results,
the corresponding characterizations that are specific to this particularly
important context. 

Consider a $C^{0}$ flow on $M$ and a nonvoid, compact invariant
set $K\subset M$.
\begin{defn}
(\emph{stable / unstable})~ $K$ is \emph{(Lyapunov)} \emph{stable}
if every $U\in\mathcal{N}_{K}$ contains a positively invariant $V\in\mathcal{N}_{K}$
$(V=\mathcal{O}^{+}(V)\subset U$, \foreignlanguage{english}{where
$\mathcal{O}^{+}(V)=\cup_{x\in V}\,\mathcal{O}^{+}(x)$ and $\mathcal{O}^{+}(x)=\{x^{\, t}:\, t\geq0\}$).}
Otherwise it is \emph{unstable}. If every $U\in\mathcal{N}_{K}$ contains
a negatively invariant $V\in\mathcal{N}_{K}$, we say that $K$ is
(-)stable. $K$ is\emph{ bi-stable} if it is both stable and (-)stable.\end{defn}
\begin{rem}
\emph{Throughout this paper, every mention to stability/instability
is always in Lyapunov's sense.}

\hypertarget{Section 2.2}{}
\end{rem}

\subsection{Non-wandering setting}

\begin{figure}
\begin{centering}
\includegraphics{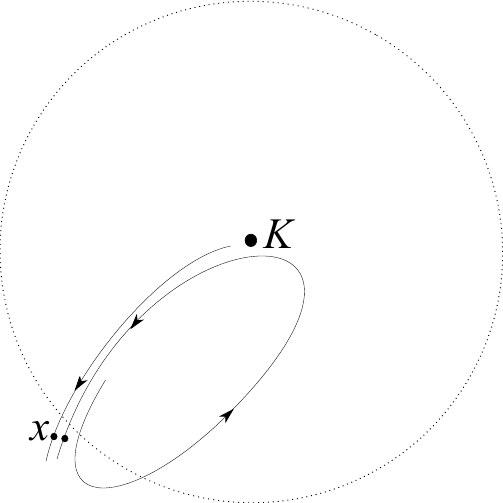}
\par\end{centering}

\caption{{\small{For non-wandering flows, $K$ unstable $\Longleftrightarrow$
$K$ (-)unstable.}}}
\end{figure}

\begin{rem}
The following elementary facts will be implicitly used throughout
this paper: (a) a nonvoid proper subset of a connected metric space
has nonvoid boundary; (b) in a locally compact metric space, compact
subsets have arbitrarily small compact neighbourhoods and both closed
and open sets are locally compact; (c) a (+)invariant set has (+)invariant
closure and interior.
\end{rem}
It is easily seen that in a\emph{ }non-wandering flow, a stable (or
(-)stable) $K$ is always bi-stable: together, the density of recurrent
points (Poincaré Recurrence Theorem)%
\footnote{Here we are invoking the following topological version of this celebrated
result: \emph{the set of recurrent points of a non-wandering $C^{0}$
flow, on a locally compact metric space $M$, is Baire residual and
thus dense in $M$ }(\emph{Proof.} for each $0\neq n\in\mathbb{Z}$,
let $R_{n}=\{x\in M:\,\exists\, t\in[1,+\infty)n\,:\, d(x^{t},x)<1/|n|\}$.
Then $R_{n}$ is open by continuity of the flow and dense in $M$
since the flow is non-wandering. Therefore, the set of recurrent points,
$R=\underset{0\neq n\in\mathbb{Z}}{\cap}R_{n}$, is Baire residual
in $M$).%
} and the continuity of the flow immediately imply that
\[
K\mbox{ \,\emph{is unstable} }\Longleftrightarrow\, K\mbox{ \,\emph{is} }\mbox{(-)}unstable\quad\mbox{(Fig. 2.1)}
\]

Therefore, in the context of\emph{ non-wandering flows}, a compact
invariant set $\emptyset\neq K\subsetneq M$ is either
\begin{enumerate}
\item \hypertarget{I}{}\emph{stable $\Longleftrightarrow$ }(-)\emph{stable
$\Longleftrightarrow$ bi-stable},\\
in which case $K$ \emph{has arbitrarily small compact invariant neighbourhoods};
due to the connectedness of $M$, this implies that \emph{for any}
$U\in\mathcal{N}_{K}$, $U\setminus K$ \emph{contains infinitely
many compact minimal sets }(\emph{Proof. }if $K$ is bi-stable and
$U_{0}\in\mathcal{N}_{K}$, then $U_{0}\setminus K$ contains a compact
minimal set: take a compact $U\in\mathcal{N}_{K}$, $M\neq U\subset U_{0}$.
As $K$ is bi-stable, there are $V_{0},V_{1}\in\mathcal{N}_{K}$ such
that $\mathcal{O}^{-}(V_{0}),\,\mathcal{O}^{+}(V_{1})\subset U$,
hence $V:=\overline{\mathcal{O}(V_{0}\cap V_{1})}\subset U$. Now
$V\neq M$ is a compact invariant neighbourhood of $K$ and $M$ is
connected, thus $\mbox{bd\,}V\subset U_{0}$ is a nonvoid compact
invariant set disjoint from $K$ and it contains at least one compact
minimal set. Therefore, since there are arbitrarily small compact
invariant neighbourhoods of $K$, $U_{0}\setminus K$ contains infinitely
many compact minimal sets), or\medskip{}

\item \emph{unstable $\Longleftrightarrow$ }(-)\emph{unstable},\\
in which case there is an $U\in\mathcal{N}_{K}$ and points $z$ arbitrarily
near $K$ such that 
\[
\mathcal{O}^{-}(z)\not\subset U\mbox{ and }\mathcal{O}^{+}(z)\not\subset U
\]

\end{enumerate}
i.e. points arbitrarily near $K$ escape from $U$ both in the past
and future. It should be mentioned that, in case (2), Ura-Kimura-Bhatia
Theorem \cite[p.114]{BH} implies that, in the absence of a pair of
points $x,\, y\in K^{c}$ such that $\,\emptyset\neq\alpha(x),\,\omega(y)\subset K$,
a kind of partial stability still takes place near $K$: there are
points $z\in K^{c}$ whose orbits remain forever (past and future)
arbitrarily near $K$ (i.e., for all $U\in\mathcal{N}_{K}$, there
are orbits $\mathcal{O}(z)\subset U\setminus K$). Obviously, both
phenomena may coexist in dimension $n\geq2$.

\hypertarget{Section 2.3}{}

\subsection{(Lyapunov) hyper-stability}

We now introduce the central concept of this paper, Lyapunov hyper-stability.
Briefly, the compact minimal sets are partitioned into Lyapunov stable
and unstable ones, the hyper-stable being those that are \emph{not}
in the (Hausdorff metric) closure of the unstable. This actually amounts
to having a neighbourhood in $M$ intersecting no (Lyapunov) unstable
compact minimal set (see below).
\begin{defn}
\hypertarget{Definition 3}{}($\mbox{CMin}$, $\mathcal{S}$, $\mathcal{U}$)
$\mbox{CMin}$ is the set of compact minimals of the flow and $\mathcal{\mathcal{S}},\,\mathcal{U}\subset\mbox{CMin}$
the subsets of those that are, respectively, (Lyapunov) stable and
unstable. For $U\subset M$, 
\[
\mbox{CMin}(U)=\mbox{set of compact minimals contained in }U
\]
$\mathcal{S}(U)$, $\mathcal{U}(U)$ are defined in an analogue way. \end{defn}
\begin{rem}
\hypertarget{Remark 4}{}$\mbox{CMin}(U)$ and more generally $\mbox{Ci}(U)$,
the set of nonvoid, compact invariant subsets of $U$ are naturally
endowed with the Hausdorff metric $d_{H}$, $\mbox{Ci}(U)$ being
compact, if $U$ is compact (see e.g. \cite[p.233]{TE}). The subscript
$_{H}$ stands for Hausdorff metric concepts, e.g. $\mbox{int}_{H}X$,
$\mbox{cl}_{H}X$ and $\mbox{bd}_{H}X$ are, respectively, the interior,
closure and boundary of $X\subset[\mbox{CMin},\, d_{H}]$.
\end{rem}
Let $\varLambda\in\mbox{CMin}$. The following three statements seem
ordered by increasing strength:
\begin{enumerate}
\item every neighbourhood of $\varLambda$ in $M$ intersects some $\varGamma\in\mathcal{U}$
\item every neighbourhood of $\varLambda$ in $M$ contains some $\varGamma\in\mathcal{U}$
\item there is a sequence $\varGamma_{n}\in\mathcal{U}$ such that $\varGamma_{n}\overset{d_{H}}{\longrightarrow}\varLambda$
\end{enumerate}
However, they are actually equivalent. $(1)\Rightarrow(2):$ this
is obvious if $\varLambda\in\mathcal{U}$, otherwise, given $U\in\mathcal{N}_{\varLambda}$,
there is a compact, (+)invariant $U\supset V\in\mathcal{N}_{\varLambda}$
(Lemma \hyperlink{Lemma 1}{1}, Remark \hyperlink{Remark 5}{5}, below).
By (1), $V$ intersects some $\varGamma\in\mathcal{U}$, hence $\varGamma\subset V\subset U$
(as $\varGamma=\overline{\mathcal{O}^{+}(x)}$, for each $x\in\varGamma$);
$(2)\Rightarrow(3):$ for each $n\geq$1, take a $\varGamma_{n}\in\mathcal{U}$
contained in $B(\varLambda,1/n).$ Then (Lemma \hyperlink{Lemma 6}{6},
below), $\varGamma_{n}\overset{d_{H}}{\longrightarrow}\varLambda$;
$(3)\Rightarrow(1):$ this is obvious since $d_{H}(\varGamma_{n},\varLambda)<\epsilon\implies\varGamma_{n}\subset B(\varLambda,\epsilon)$.
\begin{defn}
\hypertarget{Definition 4}{}(\emph{hyper-stable}, $H\mathcal{S}$)
a compact minimal set $\varLambda$ is (Lyapunov)\emph{ hyper-stable}
if some neighbourhood of $\varLambda$ in $M$ intersects \emph{no}
$\varGamma\in\mathcal{U}$. For any $U\subset M$, $H\mathcal{S}(U)$
is the set of hyper-stable $\varLambda\subset U$ and $H\mathcal{S}:=H\mathcal{S}(M)=\mbox{int}_{H}\mathcal{S}.$
Therefore, $H\mathcal{S}^{c}$ is the set of compact minimals satisfying
the three equivalent conditions (1) to (3), i.e. $H\mathcal{S}^{c}=\mbox{cl}_{H}\mathcal{U}$
or, equivalently, $H\mathcal{S}=\mbox{int}_{H}\mathcal{S}$.
\end{defn}
\hypertarget{Section 2.4}{}

\subsection{Examples}
\begin{example}
The (frictionless) mathematical pendulum, described by $\overset{.}{x}=y$,
$\overset{.}{y}=-\mbox{sin}\,2\pi x$, whose phase space is the cylinder
$M=\mathbb{S}^{1}\times\mathbb{R}$. Every orbit, except three (one
equilibrium with two homoclinic loops) is a hyper-stable compact minimal,
and they are all periodic, with the exception of the lower equilibrium
point, which is a centre. $[H\mathcal{S},d_{H}]$ is homeomorphic
to the (separated) union of $[0,1)$ and two copies of $\mathbb{R}$. 
\end{example}
\begin{figure}
\hypertarget{Fig 2.2}{}

\begin{centering}
\includegraphics{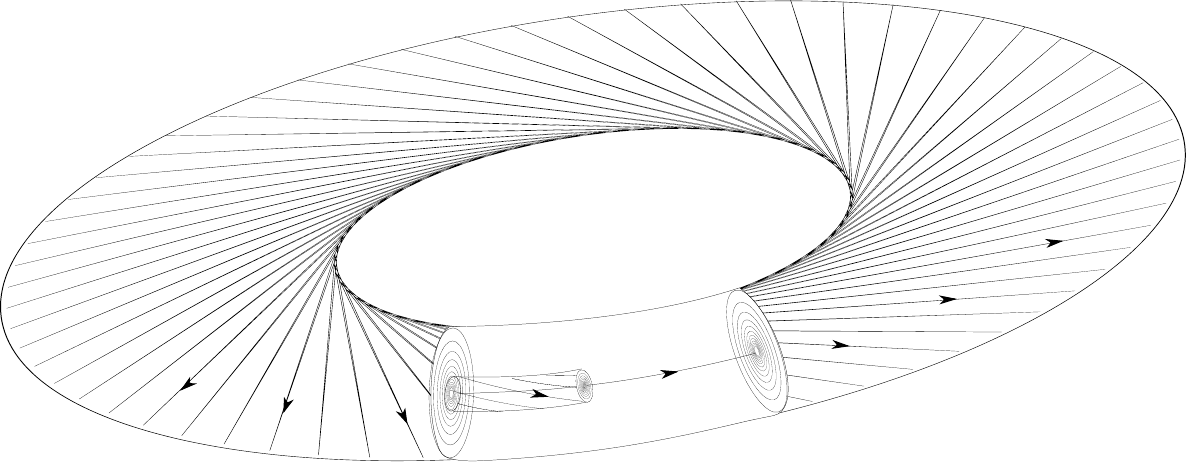}
\par\end{centering}

\caption{{\small{Example 2.}}}
\end{figure}

\begin{example}
Simple (but instructive) models of 3-dimensional volume-preserving
dynamics near a periodic orbit are given by the $C^{\infty}$ vector
fields
\[
\begin{array}{lll}
v_{\lambda}:M=\mathbb{S}^{1}\times\mathbb{D}^{2} & \longrightarrow & \mathbb{R}^{4}=\mathbb{C}^{2}\\
\qquad\,\,\,\,\,\,\,\,\,\,\,\,\,\,\,\,\,\,(z_{1},z_{2}) & \longmapsto & \big(iz_{1},i\lambda(|z_{2}|^{2})z_{2}\big)
\end{array}
\]
where $\lambda\in C^{\infty}([0,1],\mathbb{R})$ and $|\cdot|$ is
the euclidean norm (see Fig. \hyperlink{Fig 2.2}{2.2}). Each torus
$\mathbb{S}^{1}\times\mu\mathbb{S}^{1}$, $0<\mu\leq1$, is invariant
and carries either a minimal subflow ($\lambda(\mu^{2})\in\mathbb{Q}^{c}$)
or foliates into periodic orbits $(\lambda(\mu^{2})\in\mathbb{Q})$.
Thus, every point $z\in M$ belongs to some $\varLambda\in\mbox{CMin}$.
To understand the Lyapunov's nature of these minimal sets, three cases
are particularly relevant ($\simeq$ means homeomorphic):
\begin{enumerate}
\item $\lambda=const.\in\mathbb{Q}$, in which case $[H\mathcal{S},d_{H}]\simeq\mathbb{D}^{2}$.
This case defines the local fibre structure of Seifert fibrations
\cite[p.67]{E1};
\item $\lambda=const.\in\mathbb{Q}^{c}$, in which case $[H\mathcal{S},d_{H}]\simeq\mathbb{D}^{1}$;
\item $\lambda$ is constant in no nontrivial interval $I\subset[0,1]$,
in which case $H\mathcal{S}=\emptyset$ (see below).
\end{enumerate}
\end{example}
In general, $\lambda$ is constant on each component of a (possibly
empty) open set $A\subset[0,1]$, and constant in no neighbourhood
of each point $x\in A^{c}$. It follows that if $\mu^{2}\in A$, $0<\mu\leq1$,
then every $\varLambda\in\mbox{CMin}$ contained in the invariant
torus $\mathbb{S}^{1}\times\mu\mathbb{S}^{1}$ is hyper-stable, otherwise 
\begin{itemize}
\item it is an unstable periodic orbit, if $\lambda(\mu^{2})\in\mathbb{Q}$
(due to the existence of a sequence of minimal tori $\mathbb{S}^{1}\times\mu_{n}\mathbb{S}^{1}$,
$\mu_{n}\rightarrow\mu$, $\lambda(\mu_{n}^{2})\in\mathbb{Q}^{c}$,
$d_{H}$ converging to the torus $\mathbb{S}^{1}\times\mu\mathbb{S}^{1}$
containing the periodic orbit $\varLambda$);
\item $\varLambda=\mathbb{S}^{1}\times\mu\mathbb{S}^{1}$ is a stable, but
not hyper-stable minimal torus, if $\lambda(\mu^{2})\in\mathbb{Q}^{c}$
(the stability of $\varLambda$ is forced by the existence of sequences
of minimal tori $\mathbb{S}^{1}\times\xi_{n}\mathbb{S}^{1}$, $\xi_{n}\rightarrow\mu$,
$\lambda(\xi_{n}^{2})\in\mathbb{Q}^{c}$, $d_{H}$ converging to $\varLambda$
from both sides, thus entailing the existence of arbitrarily small
invariant neighbourhoods; $\varLambda$ is not hyper-stable due to
the existence of unstable periodic orbits in tori $\mathbb{S}^{1}\times\zeta_{n}\mathbb{S}^{1}$,
$\zeta_{n}\rightarrow\mu$, $\lambda(\zeta_{n}^{2})\in\mathbb{Q}$,
and thus in every neighbourhood of $\varLambda=\mathbb{S}^{1}\times\mu\mathbb{S}^{1}$).
\end{itemize}
The core periodic orbit $\gamma=\mathbb{S}^{1}\times\{0\}$ is always
stable, being hyper-stable iff $0\in A$.

\hypertarget{Section 2.5}{}

\subsection{Hyper-stability in flows with all orbits periodic}
\begin{defn}
$\mbox{Per}:=\,$the set of periodic orbits of the flow.
\end{defn}
If a flows with all orbits periodic induces a circle bundle on $M$,
then $H\mathcal{S}=\mbox{Per}$ and $[H\mathcal{S},d_{H}]$ is homeomorphic
to the base space of the bundle, which, however, may be a non-manifold
(see Section \hyperlink{Section 5.2}{5.2}).
\begin{example}
A classical example of a volume-preserving $C^{\omega}$ periodic
flow%
\footnote{A flow $\theta$ without equilibria is \emph{periodic} if there is
a $t>0$ such that $\theta^{t}=\mbox{Id}$.%
} inducing a nontrivial circle bundle is given by the Hopf flow\emph{
}on $\mathbb{S}^{2n+1}$ ($\mathbb{R}^{2n+2}$ identified with $\mathbb{C}^{n+1}$),
\[
(t,\, z)\longmapsto e^{it}z
\]
The base space of the induced bundle is $\mathbb{C}P^{n}\simeq H\mathcal{S}$,
the case $n=1$, $\mathbb{S}^{1}\hookrightarrow\mathbb{S}^{3}\rightarrow\mathbb{S}^{2}\simeq\mathbb{C}P^{1}$
being the original Hopf fibration \cite[p.230]{DU}.
\end{example}
It is possible that $H\mathcal{S}=\mbox{Per}$, even if a flows with
all orbits periodic does not induce a circle bundle. $C^{1}$ flows
with all orbits periodic on orientable, closed 3-manifolds induce
Seifert fibrations (Epstein, \cite{E1}), which in general are not
circle bundles. This guarantees that $H\mathcal{S}=\mbox{Per}$ and
also, that $[H\mathcal{S},d_{H}]$ is always a closed 2-manifold,
a remarkable phenomenon unparalleled in the higher dimensions (see
also Section \hyperlink{Section 5.2}{5.2}, Example \hyperlink{Example 4}{4}).

However, there are $C^{\infty}$ flows with all orbits periodic on
closed manifolds, for which $H\mathcal{S}\subsetneq\mbox{Per}$ i.e.
$\mathcal{U}\neq\emptyset$. Sullivan \cite{SU} constructed a beautiful
example of a such a flow (which can be made $C^{\omega}$), on a closed
5-manifold, for which there is no bound to the lengths of the orbits.
This implies (Epstein \cite[Theorem 4.3]{E2}) the existence of Lyapunov
unstable orbits in the flow (take a convergent sequence $x_{n}\in\gamma_{n}$,
where $\gamma_{n}$ is a sequence orbits with no bound on their lengths.
Then, the orbit $\mathcal{O}(\mbox{lim}\, x_{n})$ is necessarily
unstable).

\hypertarget{Section 3}{}

\section{Main lemmas. First consequences}

Lemmas \hyperlink{Lemma 1}{1} and \hyperlink{Lemma 3}{3} constitute
the core tools for the comprehension of the dynamics near hyper-stable
compact minimals. Lemma \hyperlink{Lemma 3}{3}, in particular, essentially
shows how each component of $H\mathcal{S}$ inherits the local topological
structure of the phase space (i.e. its local compactness and local
connectedness).
\begin{lem}
\hypertarget{Lemma 1}{}Every\foreignlanguage{english}{\textup{ $\varLambda\in H\mathcal{S}$}}
has arbitrarily small compact, connected, (+)invariant neighbourhoods
$U$ in $M$ such that \emph{$\mbox{CMin}(U)\subset H\mathcal{S}.$}\end{lem}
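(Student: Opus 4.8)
The plan is to build $U$ in two stages: first produce an arbitrarily small compact, connected, (+)invariant neighbourhood of $\varLambda$, and then arrange it to sit inside a fixed neighbourhood that already witnesses hyper-stability, so that the condition $\mbox{CMin}(U)\subset H\mathcal{S}$ comes for free. The hyper-stability bookkeeping is the cheap half. Since $\varLambda\in H\mathcal{S}=\mbox{int}_{H}\mathcal{S}\subset\mathcal{S}$, it is in particular stable, and by the definition of hyper-stability there is an \emph{open} $W_{0}\in\mathcal{N}_{\varLambda}$ meeting no $\varGamma\in\mathcal{U}$. If the final $U$ is kept inside $W_{0}$, then any compact minimal $\varGamma\subset U\subset W_{0}$ has $W_{0}$ itself as a neighbourhood meeting no unstable minimal, so $\varGamma$ is hyper-stable; hence $\mbox{CMin}(U)\subset H\mathcal{S}$ automatically. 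Given an arbitrary target $\Omega\in\mathcal{N}_{\varLambda}$, I would therefore work throughout inside $W_{0}\cap\Omega$, which guarantees both smallness and the hyper-stability conclusion.

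Next, inside $W_{0}\cap\Omega$ I construct the neighbourhood. By local compactness of $M$ pick a compact $C\in\mathcal{N}_{\varLambda}$ with $C\subset W_{0}\cap\Omega$ and set $G=\mbox{int}\,C$; by stability there is a (+)invariant $V\in\mathcal{N}_{\varLambda}$ with $V\subset G$, so $\overline{V}\subset C$ is compact and (+)invariant (the closure of a (+)invariant set is (+)invariant). Now let $U$ be the connected component of $\overline{V}$ containing $\varLambda$, which is a single well-defined component because $\varLambda$ is a continuum. Three properties must be checked. Compactness: $U$ is a component of the compact set $\overline{V}$, hence closed in it and compact. (+)invariance: for $x\in U$ the positive orbit $\mathcal{O}^{+}(x)$ is a connected subset of $\overline{V}$ through $x$, so it lies in the component $U$. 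Neighbourhood property: local connectedness of $M$ makes the component $W'$ of the open set $\mbox{int}\,V$ containing $\varLambda$ open, and $W'$ is a connected subset of $\overline{V}$ through $\varLambda$, so $W'\subset U$; thus $U\supset W'\ni\varLambda$ is a neighbourhood of $\varLambda$. Finally $U\subset\overline{V}\subset W_{0}\cap\Omega$, which closes the argument and, since $\Omega$ is arbitrary, yields arbitrarily small such $U$.

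The part requiring the most care is reconciling the three structural demands simultaneously: connectedness forces one to pass to a component, yet a component of the \emph{closed} set $\overline{V}$ need not be a neighbourhood, while a component of the \emph{open} set $\mbox{int}\,V$ need not be compact or (+)invariant. The trick that dissolves this tension is to take the component $U$ inside $\overline{V}$ (securing compactness and, via connectedness of positive orbits, (+)invariance) and then to certify that $U$ is a neighbourhood by trapping the open component $W'\subset\mbox{int}\,V$ inside it; this last step is exactly where local connectedness of the phase space is indispensable. Everything else is routine, the dynamical input being confined to the single use of stability to extract the (+)invariant $V$, and to the observation that $W_{0}$ simultaneously certifies hyper-stability of every $\varGamma\in\mbox{CMin}(U)$.
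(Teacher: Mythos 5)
Your proof is correct, but it reaches connectedness by a genuinely different mechanism than the paper, essentially in the opposite order: the paper first builds a \emph{connected} set and then makes it (+)invariant, whereas you first build a \emph{(+)invariant} set and then make it connected. Concretely, the paper takes a compact neighbourhood $W$ of $\varLambda$ containing no minimal of $\mbox{cl}_{H}\mathcal{U}$, uses stability to get $V\in\mathcal{N}_{\varLambda}$ with $\overline{\mathcal{O}^{+}(V)}\subset W$, covers the continuum $\varLambda$ by a minimal finite family of connected open sets $B_{i}\subset V$ (this is where local connectedness enters for the paper), and sets $U:=\overline{\mathcal{O}^{+}(\cup B_{i})}$; connectedness is then automatic (positive saturation and closure preserve connectedness of $\cup B_{i}$, each $B_{i}$ meeting the connected $\varLambda$), and so is the neighbourhood property, since $U\supset\cup B_{i}$. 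You instead take the component $U$ of the compact (+)invariant set $\overline{V}$ through $\varLambda$, recover (+)invariance of $U$ from connectedness of positive orbits, and recover the neighbourhood property by trapping the open component $W'$ of $\mbox{int}\,V$ inside $U$ --- that trapping step, which needs local connectedness of $M$, is one the paper's construction never has to make. Your bookkeeping for $\mbox{CMin}(U)\subset H\mathcal{S}$ is also slightly more self-contained: by keeping $U$ inside an \emph{open} witness $W_{0}$ meeting no unstable minimal, every minimal $\varGamma\subset U$ is hyper-stable directly from Definition 4, whereas the paper appeals to the equivalence $H\mathcal{S}^{c}=\mbox{cl}_{H}\mathcal{U}$ of Section 2.3 and to compact neighbourhoods containing no element of $\mbox{cl}_{H}\mathcal{U}$. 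Both routes consume the same ingredients (stability, local compactness, local connectedness, a hyper-stability witness); yours trades the finite-cover-plus-saturation construction for component extraction, at the cost of having to re-verify invariance and the neighbourhood property afterwards.
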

\begin{rem}
\hypertarget{Remark 5}{}Except for the the conclusion $\mbox{CMin}(U)\subset H\mathcal{S}$,
the same holds if $\varLambda\in\mathcal{S}$. \end{rem}
\begin{proof}
$M$ is locally compact and $\varLambda\in H\mathcal{S}$, hence $\mbox{\ensuremath{\varLambda}}$
\foreignlanguage{english}{has arbitrarily small compact neighbourhoods
containing no $\varGamma\in\mbox{cl}_{H}\mathcal{U}$ (Section }\hyperlink{Section 2.3}{2.3}\foreignlanguage{english}{).
Let $W$ be one of them. As $\varLambda$ is stable and $W$ is compact,
there is a $V\in\mathcal{N}_{\varLambda}$ such that $\overline{\mathcal{O}^{+}(V)}\subset W$.
Take a minimal finite cover of $\varLambda$ by sufficiently small
connected open sets $B_{i}\subset V$ ($M$ is locally connected).
Then $U:=\overline{\mathcal{O}^{+}(\cup B_{i})}\subset W$ is a compact,
connected (Remark }\hyperlink{Remark 1.a}{1.a}\foreignlanguage{english}{),
(+)invariant neighbourhood of $\varLambda$ and $\mbox{CMin}(U)\subset H\mathcal{S}=(\mbox{cl}_{H}\mathcal{U})^{c}$.}\end{proof}
\begin{lem}
\hypertarget{Lemma 2}{} Suppose that

\emph{(a)} $K,\, K\text{'}$ are nonvoid, compact invariant sets;

\emph{(b)} \emph{$\varLambda_{n}\in\mbox{CMin}$};

\emph{(c)} $\alpha(x),\,\omega(x)$ are nonvoid, compact limit sets.
Then,
\begin{enumerate}
\item $\varLambda_{n}\overset{d_{H}}{\longrightarrow}K$ and $K'\subsetneq K$
$\Longrightarrow$ $K'$ is unstable;
\item $K'\subsetneq\omega(x)$ $\Longrightarrow$ $K'$ is unstable;
\item $K\subset\alpha(x)$ is stable $\Longrightarrow$ $x\in K=\alpha(x)$;
\item if the flow is non-wandering and $\omega(x)$ is stable, then $x\in\omega(x)$.
\end{enumerate}
\end{lem}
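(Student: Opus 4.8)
The plan is to derive all four items from one mechanism: a stable compact invariant set $K'$ has arbitrarily small \emph{open} positively invariant neighbourhoods $V$ (take the interior of the $(+)$invariant neighbourhood supplied by stability; the interior of a $(+)$invariant set is again $(+)$invariant), and every such $V$ \emph{traps} forward orbits, so that once a trajectory or a point of a minimal set enters $V$ it never leaves $\overline{V}$. Each statement is then obtained by playing this trapping against a point that the relevant object is forced to approach \emph{outside} $K'$.

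For (1) I would assume $K'$ stable and fix $p\in K\setminus K'$. Compactness of $K'$ gives $d(p,K')>0$, so stability furnishes an open $(+)$invariant $V\supset K'$ with $p\notin\overline{V}$. Since $\varLambda_{n}\overset{d_{H}}{\longrightarrow}K$, every point of $K$ is approximated by points of $\varLambda_{n}$; for $q\in K'$ this yields $x_{n}\in\varLambda_{n}$ with $x_{n}\to q\in V$, so $\varLambda_{n}\cap V\neq\emptyset$ for large $n$. Minimality then forces $\varLambda_{n}=\overline{\mathcal{O}^{+}(x_{n})}\subset\overline{V}$, because $\overline{\mathcal{O}^{+}(z)}=\varLambda_{n}$ for every point $z$ of a compact minimal set. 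But the same convergence produces $y_{n}\in\varLambda_{n}$ with $y_{n}\to p\notin\overline{V}$, contradicting $\varLambda_{n}\subset\overline{V}$; hence $K'$ is unstable.

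Statement (2) uses the identical idea with the single orbit of $x$ in place of the $\varLambda_{n}$: choosing $p\in\omega(x)\setminus K'$ and an open $(+)$invariant $V\supset K'$ with $p\notin\overline{V}$, the inclusion $K'\subset\omega(x)$ gives times $s_{k}\to+\infty$ with $x^{s_{k}}\in V$, so $(+)$invariance pins the whole tail $\{x^{t}:t\geq s_{k_{0}}\}$ inside $V$, while $p\in\omega(x)$ gives later times $t_{j}>s_{k_{0}}$ with $x^{t_{j}}\notin\overline{V}$, a contradiction. For (3) I would run the trap in reverse time: if $K\subset\alpha(x)$ is stable, take any open $(+)$invariant $V\supset K$; since $K\subset\alpha(x)$ there are times $t_{k}\to-\infty$ with $x^{t_{k}}\in V$, and $(+)$invariance forces $x^{s}\in V$ for \emph{every} $s$ (each $s$ exceeds some such $t_{k}$), i.e. $\mathcal{O}(x)\subset V$. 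Shrinking $V$ through a neighbourhood basis of $K$ gives $\mathcal{O}(x)\subset K$, whence $x\in K$ and $\alpha(x)\subset K$; with the hypothesis $K\subset\alpha(x)$ this yields $x\in K=\alpha(x)$.

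The one place where plain stability is insufficient, and the step to get right, is (4), where the trap must be \emph{two-sided}. Here I would invoke the non-wandering hypothesis through Section~2.2: on a non-wandering flow a stable compact invariant set is bi-stable and hence has arbitrarily small compact \emph{invariant} (two-sided) neighbourhoods. Arguing by contradiction, if $x\notin\omega(x)$ then $\delta:=d(x,\omega(x))>0$, and I may pick a compact invariant neighbourhood $N\subset B(\omega(x),\delta/2)$, so $x\notin N$ while $\omega(x)\subset\mbox{int}\,N$. Taking $q\in\omega(x)$ and $t_{k}\to+\infty$ with $x^{t_{k}}\to q\in\mbox{int}\,N$ gives $x^{t_{k}}\in N$ for large $k$, and full invariance of $N$ returns $x=(x^{t_{k}})^{-t_{k}}\in N$, a contradiction; hence $x\in\omega(x)$. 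The main obstacle is thus conceptual rather than computational: recognizing that (1)--(3) need only the one-sided trapping of a $(+)$invariant neighbourhood, whereas (4) genuinely requires the bi-invariance that only the non-wandering hypothesis supplies. Everything else reduces to the elementary geometry of Hausdorff convergence and of $\alpha$- and $\omega$-limit sets.
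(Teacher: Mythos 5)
Your proof is correct in all four items. The route differs from the paper's mainly in logical packaging rather than substance: for (1) and (2) the paper argues \emph{directly}, exhibiting points arbitrarily near $K'$ whose forward orbits escape the fixed ball $B(K',d/2)$ (where $d$ is the distance from $K'$ to a chosen $y\in K\setminus K'$, resp. $y\in\omega(x)\setminus K'$), with explicit metric estimates; you instead assume stability and refute it via your trapping mechanism. The dynamical content is identical, since both hinge on minimality --- you use it in the form $\overline{\mathcal{O}^{+}(z)}=\varLambda_{n}$ for every $z\in\varLambda_{n}$, the paper in the equivalent form of dense forward orbits (the orbit of $a$ comes near $b$) --- so the two arguments are essentially contrapositives of one another. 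Your item (4) coincides with the paper's: both invoke the Section 2.2 fact that, in a non-wandering flow, a stable compact invariant set has arbitrarily small (two-sided) invariant neighbourhoods, which then trap $x$ backwards. Your item (3) is organized slightly differently and proves marginally more: the paper simply notes that $x\notin K$ together with $K\subset\alpha(x)$ ``obviously'' makes $K$ unstable (points $x^{t_k}$ near $K$ flow forward to $x$), whereas you show that stability confines the \emph{entire} orbit $\mathcal{O}(x)$ inside every neighbourhood of $K$, hence inside $K$ itself. What the paper's direct version buys is a quantitative witness of instability (escape from a specific ball); what yours buys is a single uniform mechanism applied four times, with no epsilon bookkeeping.
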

\begin{proof}
In each case, assume that the hypothesis hold.

(1) take $y\in K\setminus K'$ and let $d=d(y,K')$. Given $\epsilon>0$,
take $n\in\mathbb{N}$ such that $d':=d_{H}(\varLambda_{n},K)<\mbox{min}(\epsilon,d/2)$.
Take $a,b\in\varLambda_{n}$ such that 
\[
\begin{array}{c}
d(a,K')\leq d'<\epsilon\mbox{ \,\ and \,}d(b,y)\leq d'<d/2\end{array}
\]
As $\varLambda_{n}$ is minimal, for some $t>0$, 
\[
d(a^{t},b)<d/2-d(b,y)
\]
thus implying that 
\[
d(a^{t},y)\leq d(a^{t},b)+d(b,y)<d/2
\]
which by its turn implies $d(a^{t},K')>d/2$. Therefore, $K'$ is
unstable, as points arbitrarily near $K'$ escape $B(K',d/2)$ in
positive time.

(2) let $y\in\omega(x)\setminus K'$. Given $\epsilon_{0}>0$, take
\[
0<\epsilon<\mbox{min}\big(\epsilon_{0},d(y,K')/2\big)
\]
Then, there are $0<t<T$ such that $d(x^{\, t},K')<\epsilon<\epsilon_{0}$
and $d(x^{T},y)<\epsilon<d(y,K')/2$, the last inequality implying
$d(x^{T},K')>d(y,K')/2$, hence $K'$ is unstable as in (1).

(3) $\emptyset\neq K\subset\alpha(x)$ and $x\not\in K$ obviously
implies that $K$ is unstable, therefore $x\in K$ and $K=\alpha(x)$
(as $K$ is a closed invariant set).

(4) $\omega(x)$ has arbitrarily small invariant neighbourhoods (Section
\hyperlink{Section 2.2}{2.2}), hence no point outside $\omega(x)$
may have $\omega(x)$ as $\omega$-limit.\end{proof}
\begin{lem}
\hypertarget{Lemma 3}{}Let $U$ be a nonvoid, compact, connected,
(+)invariant set such that \emph{$\mbox{CMin}(U)\subset H\mathcal{S}$.}
Then:
\begin{enumerate}
\item ($\omega$-limits are in $H\mathcal{S}(U)$)~ $x\in U\implies\emptyset\neq\omega(x)\in H\mathcal{S}(U)$;
\item ($\omega$-limits convergence)~ $U\ni x_{n}\rightarrow x\implies\omega(x_{n})\overset{d_{H}}{\longrightarrow}\omega(x)$;
\item (escaping orbits)~ $x\in U$ and $x\not\in\omega(x)\implies\mathcal{O}^{-}(x)\not\subset U$;
\item ($H\mathcal{S}(U)$ is a continuum)~ \emph{$H\mathcal{S}(U)$} is
$d_{H}$ compact and connected.
\end{enumerate}
\end{lem}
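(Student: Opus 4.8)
The plan is to run all four statements off a single device: the map $\omega\colon U\to[\mbox{CMin}(U),d_{H}]$, $x\mapsto\omega(x)$, whose good behaviour encodes everything. First observe the trivial but useful reduction $H\mathcal{S}(U)=\mbox{CMin}(U)$: by hypothesis every compact minimal contained in $U$ is already hyper-stable. For (1), since $U$ is compact and (+)invariant, for $x\in U$ the set $\omega(x)$ is automatically nonvoid, compact, invariant and contained in $U$; it remains only to exclude non-minimality. If $\omega(x)$ were not minimal it would contain a \emph{proper} compact minimal subset $K'\subsetneq\omega(x)$ (any nonvoid compact invariant set contains a minimal one), and then part (2) of Lemma 2 would force $K'$ to be unstable, contradicting $K'\in\mbox{CMin}(U)\subset H\mathcal{S}\subset\mathcal{S}$. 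Hence $\omega(x)\in\mbox{CMin}(U)=H\mathcal{S}(U)$.

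The heart of the matter, and the step I expect to be the main obstacle, is (2): the $d_{H}$-continuity of $\omega$. Writing $\varLambda=\omega(x)$ and $\varLambda_{n}=\omega(x_{n})$ (all minimal, by (1)), I would first extract upper semicontinuity from the \emph{stability} of $\varLambda$. Given $\epsilon>0$, pick a positively invariant $V\in\mathcal{N}_{\varLambda}$ with $V\subset B(\varLambda,\epsilon)$; since $\omega(x)=\varLambda\subset\mbox{int}\,V$, the forward orbit of $x$ enters $\mbox{int}\,V$ at some time $T$, and joint continuity of the flow gives $x_{n}^{\,T}\in V$ for all large $n$, whence $\varLambda_{n}\subset\overline{V}\subset B(\varLambda,\epsilon)$ by positive invariance. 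Consequently every Hausdorff subsequential limit $K$ of $(\varLambda_{n})$ (such limits exist because $\mbox{Ci}(U)$ is $d_{H}$-compact, Remark 4) satisfies $K\subset\varLambda$. But $K$ is a nonvoid compact invariant set (a Hausdorff limit of invariant sets is invariant) and $\varLambda$ is minimal, so $K=\varLambda$. Since every subsequential limit equals $\varLambda$ and the ambient hyperspace is compact, the whole sequence converges, $\varLambda_{n}\overset{d_{H}}{\longrightarrow}\varLambda$. The delicate points here are the upper-semicontinuity argument and the realization that minimality collapses all subsequential limits to a single value.

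Statement (3) I would prove by contradiction. If $\mathcal{O}^{-}(x)\subset U$ then, $U$ being compact, $\alpha(x)$ is a nonvoid compact invariant subset of $U$; any minimal $K\subset\alpha(x)$ lies in $\mbox{CMin}(U)\subset\mathcal{S}$, so is stable, and part (3) of Lemma 2 yields $x\in K=\alpha(x)$. But then $x$ sits inside the minimal set $K$, which forces $\omega(x)=K\ni x$, contradicting $x\not\in\omega(x)$.

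Finally (4) falls out almost for free. For compactness, $\mbox{CMin}(U)$ is closed in the $d_{H}$-compact space $\mbox{Ci}(U)$: a limit $\varLambda_{n}\overset{d_{H}}{\longrightarrow}K$ with $K$ non-minimal would, through a proper minimal $K'\subsetneq K$ together with part (1) of Lemma 2, manufacture an unstable $K'\in\mbox{CMin}(U)\subset\mathcal{S}$, which is absurd; hence $K$ is minimal and $\mbox{CMin}(U)=H\mathcal{S}(U)$ is $d_{H}$-compact. Connectedness, the one genuinely structural point, I would obtain by reading (1) and (2) together: $\omega\colon U\to[H\mathcal{S}(U),d_{H}]$ is continuous by (2) and surjective (for $\varLambda\in\mbox{CMin}(U)$, any $x\in\varLambda$ has $\omega(x)=\varLambda$), so, $U$ being connected, its continuous image $H\mathcal{S}(U)$ is connected too. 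Thus the continuity of $\omega$ established in (2) is what does the real work, powering both the well-definedness in (1) and the connectedness in (4).
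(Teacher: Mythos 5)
Your proof is correct; parts (1) and (3) coincide with the paper's argument, but the two steps where the real work happens --- (2) and the connectedness in (4) --- follow a genuinely different route. For (2), the paper argues by contradiction: if a Blaschke subsequential limit $\varGamma$ of the $\omega(x_{n})$ differed from $\omega(x)$, it picks $z\in\varGamma\setminus\omega(x)$ and produces points arbitrarily near $\omega(x)$ whose forward orbits later pass near $z$, violating the stability of $\omega(x)$. You instead use stability \emph{positively}: a positively invariant $V\in\mathcal{N}_{\omega(x)}$ inside $B(\omega(x),\epsilon)$ traps the forward orbits of the $x_{n}$, giving upper semicontinuity, and minimality then collapses every subsequential limit (a nonvoid compact invariant subset of $\omega(x)$) onto $\omega(x)$ itself. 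This cleanly separates the two dynamical inputs (stability traps, minimality collapses); note also that once you have $\omega(x_{n})\subset B(\omega(x),\epsilon)$ eventually, Lemma 6 of the paper would finish (2) at once, since all the sets involved are minimal. For (4), the paper proves connectedness by contradiction, splitting $H\mathcal{S}(U)$ into disjoint nonvoid compacts $C_{0},C_{1}$ and showing that the preimages $\varOmega_{j}=\{x\in U:\,\omega(x)\in C_{j}\}$ disconnect $U$; your observation that $H\mathcal{S}(U)=\omega(U)$ is the continuous image of the connected set $U$ is exactly that argument folded into a standard theorem, and it buys $d_{H}$ compactness for free as well (continuous image of a compact set), which makes your separate closedness-in-$\mbox{Ci}(U)$ argument via Lemma 2.1 correct but redundant. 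Two small repairs: from $V\subset B(\omega(x),\epsilon)$ you only get $\overline{V}\subset\overline{B(\omega(x),\epsilon)}$, so choose $V$ inside $B(\omega(x),\epsilon/2)$; and both of your hyperspace steps use (as the paper does, via Remark 4) that a $d_{H}$-limit of nonvoid compact invariant subsets of $U$ is again nonvoid, compact and invariant.
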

\begin{defn}
\hypertarget{Def 6}{}if $C\subset\mbox{CMin}$, then $C^{*}:=\underset{\varLambda\in C}{\cup}\varLambda\subset M$.\end{defn}
\begin{proof}
(1) since $U$ is compact and (+)invariant, $\emptyset\neq\omega(x)\subset U$
is compact, for every $x\in U$ and it must be a minimal set (stable
by hypothesis), otherwise there is an unstable compact minimal set
$\varLambda\subsetneq\omega(x)$ (Lemma \hyperlink{Lemma 2}{2.2}),
contradicting $\mbox{CMin}(U)\subset H\mathcal{S}$. 

(2) let $U\ni x_{n}\rightarrow x$. Since $U$ is compact and the
$\omega(x_{n})$'s are nonvoid compact invariants, by Blaschke Principle
(\cite[p.233]{TE}), there is a subsequence $\omega(x_{n'})\overset{d_{H}}{\longrightarrow}\varGamma$,
for some $\varGamma\in\mbox{Ci}(U)$ (see Remark \hyperlink{Remark 4}{4}
above). Reasoning by contradiction, suppose that $\varGamma\neq\omega(x)$.
Take $z\in\varGamma\setminus\omega(x)\neq\emptyset$ ($\omega(x)$
is minimal by (1)). Then, by continuity of the flow, there are sequences
$0<t_{n}<T_{n}$ such that 
\[
d\big(x_{n}^{t_{n}},\,\omega(x)\big)\rightarrow0\mbox{ \,\ and \,}d\big(x_{n}^{T_{n}},\, z\big)\rightarrow0
\]
which implies that $\omega(x)$ is unstable, contradicting (1). Hence,
every subsequence $\omega(x_{n'})$ of $\omega(x_{n})$ contains a
subsequence $\omega(x_{n''})\overset{d_{H}}{\longrightarrow}\omega(x)$,
trivially implying that $\omega(x_{n})\overset{d_{H}}{\longrightarrow}\omega(x)$.

(3) suppose that $x\in U$ and $x\not\in\omega(x)$. Then, $\mathcal{O}^{-}(x)\not\subset U$,
otherwise $\emptyset\neq\alpha(x)\subset U$ is compact, implying
that $\alpha(x)$ contains a compact minimal set $\varGamma$ and
$x\not\in\varGamma$ (since $x\not\in\omega(x))$, thus implying (Lemma
\hyperlink{Lemma 2}{2.3}) that $\varGamma\in\mbox{CMin}(U)$ is unstable,
contradiction. Actually, 
\[
\exists\, T\leq0:\, x^{(-\infty,T)}\subset U^{c}\mbox{ and }\; x^{[T,+\infty)}\subset U
\]
since $U$ being compact and (+)invariant, 
\[
-\infty<T:=\mbox{min}\big\{ t\leq0:\, x^{[t,0]}\subset U\big\}\leq0
\]
is well defined and has the required properties.

(4.1) $H$$\mathcal{S}(U)$ \emph{is $d_{H}$ compact}: given a sequence
$\varLambda_{n}\in H\mathcal{S}(U)=\mbox{CMin}(U)$, take a convergent
subsequence $x_{n'}\in\varLambda_{n'}$ ($U$ is compact). Then, $\varLambda_{n'}=\omega(x_{n'})$
and by (2) and (1)
\[
\omega(x_{n'})\overset{d_{H}}{\longrightarrow}\omega(\mbox{lim\,}x_{n'})\in H\mathcal{S}(U).
\]

(4.2) $H$$\mathcal{S}(U)$ \emph{is $d_{H}$ connected}: reasoning
by contradiction assume it is not. Then, by (4.1), $H\mathcal{S}(U)$
is the union of two nonvoid, disjoint compacts $C_{0},$ $C_{1}\subset H\mathcal{S}(U)$.

\hypertarget{Lemma 3.claim}{}

\emph{Claim. }$C_{0}^{*},\, C_{1}^{*}\subset U$ \emph{are nonvoid,
disjoint compacts}: they are obviously nonvoid. Fix $j\in\{0,1\}$.
Let $x_{n}\in C_{j}^{*}$. Each $x_{n}$ belongs to some $\varLambda_{n}\in C_{j}$.
Take a convergent subsequence $\varLambda_{n'}\overset{d_{H}}{\longrightarrow}\varLambda\in C_{j}$.
$U$ being compact, $x_{n'}$ has a convergent subsequence $x_{n''}\rightarrow x\in U$.
But \foreignlanguage{english}{$\varLambda_{n''}\overset{d_{H}}{\longrightarrow}\varLambda$
implies that $x\in\varLambda$, thus $x\in C_{j}^{*}$ and $C_{j}^{*}$
is compact. $C_{0}^{*}$, $C_{1}^{*}$ are disjoint since $C_{0}\cap C_{1}=\emptyset$
and distinct minimal sets are disjoint. }

\selectlanguage{english}%
Now let
\[
\varOmega_{j}:=\big\{ x\in U:\,\omega(x)\in C_{j}\big\},\,\mbox{ for \,}j=0,1
\]
We show that these two sets form a nontrivial partition of $U$ into
closed subsets, thus getting a contradiction. Since $C_{0}\cap C_{1}=\emptyset$,
$\varOmega_{0}\cap\varOmega_{1}=\emptyset$; $\varOmega_{0}\neq\emptyset\neq\varOmega_{1}$
since $C_{j}^{*}\subset\varOmega_{j}$. Also, since $C_{0},\, C_{1}\subset H\mathcal{S}(U)$
are closed, by (2), both $\varOmega_{0}$ and $\varOmega_{1}$ are
closed in $U$. Finally, by (1), $U=\varOmega_{0}\sqcup\varOmega_{1}$,
hence $U$ is disconnected.\end{proof}
\begin{lem}
\hypertarget{Lemma 4}{}\foreignlanguage{english}{$H\mathcal{S}$
is $d_{H}$ locally compact and locally connected.}\end{lem}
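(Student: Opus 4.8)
The plan is to derive both properties from the neighbourhoods furnished by Lemma~\hyperlink{Lemma 1}{1} together with the structure of $H\mathcal{S}(U)$ described in Lemma~\hyperlink{Lemma 3}{3}. Fix $\varLambda\in H\mathcal{S}$ and, using Lemma~\hyperlink{Lemma 1}{1}, choose arbitrarily small compact, connected, (+)invariant $U\in\mathcal{N}_{\varLambda}$ with $\mbox{CMin}(U)\subset H\mathcal{S}$, so that $H\mathcal{S}(U)=\mbox{CMin}(U)$ and $x\mapsto\omega(x)$ defines a continuous surjection $\omega_{U}\colon U\to[H\mathcal{S}(U),d_{H}]$ (parts (1) and (2) of Lemma~\hyperlink{Lemma 3}{3}). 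The observation driving everything is that if $\epsilon>0$ is small enough that the $\epsilon$-neighbourhood $B(\varLambda,\epsilon)$ of $\varLambda$ in $M$ lies in $U$, then any $\varGamma\in\mbox{CMin}$ with $d_{H}(\varGamma,\varLambda)<\epsilon$ satisfies $\varGamma\subset B(\varLambda,\epsilon)\subset U$, hence $\varGamma\in H\mathcal{S}(U)$; thus $H\mathcal{S}(U)$ contains the $d_{H}$-ball of radius $\epsilon$ about $\varLambda$ and is a $d_{H}$-neighbourhood of $\varLambda$ in $H\mathcal{S}$.

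For local compactness this already suffices: $H\mathcal{S}(U)$ is $d_{H}$-compact by Lemma~\hyperlink{Lemma 3}{3}(4), and by the previous remark it is a neighbourhood of $\varLambda$, so shrinking $U$ produces a neighbourhood basis of compact sets. For local connectedness the naive attempt—using $H\mathcal{S}(U)$ itself, which is connected by Lemma~\hyperlink{Lemma 3}{3}(4)—will fail, and I expect this to be the main obstacle: although $H\mathcal{S}(U)$ is a connected neighbourhood, it need not be $d_{H}$-small, since a thin neighbourhood of a ``large'' $\varLambda$ may contain minimal sets $\varGamma$ (e.g. localized ones) sitting $d_{H}$-far from $\varLambda$ while still lying in $U$. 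One must therefore manufacture connected neighbourhoods that are genuinely small in $d_{H}$.

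To do this I would fix $\eta>0$ and consider $V_{\eta}:=\{x\in U:\,d_{H}(\omega(x),\varLambda)<\eta\}=\omega_{U}^{-1}\big(B_{H}(\varLambda,\eta)\big)$, open in $U$ by continuity of $\omega_{U}$ and a neighbourhood of $\varLambda$ in $M$ (it contains $\varLambda$, as $\omega(x)=\varLambda$ for $x\in\varLambda$, and is open in the neighbourhood $U$). Since $M$ is locally connected and $\varLambda$ is connected, the component $V'$ of $\varLambda$ in the open set $V_{\eta}\cap\mbox{int}_{M}U$ is open and connected with $\varLambda\subset V'\subset V_{\eta}$. I then claim $W:=\omega_{U}(V')$ is the required neighbourhood: it is connected as the continuous image of $V'$; it satisfies $W\subset B_{H}(\varLambda,\eta)$ because $V'\subset V_{\eta}$; and it is a $d_{H}$-neighbourhood of $\varLambda$ in $H\mathcal{S}$ because $W$ contains $\{\varGamma\in H\mathcal{S}:\,\varGamma\subset V'\}$, which is $d_{H}$-open and contains $\varLambda$ (each such $\varGamma$ equals $\omega(x)$ for any $x\in\varGamma\subset V'$). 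Letting $\eta\to0$ then yields arbitrarily small connected neighbourhoods. The only points needing care are the two verifications that $V_{\eta}$ and $W$ are genuine neighbourhoods (of $\varLambda$ in $M$ and in $H\mathcal{S}$ respectively), both reducing to the elementary fact that $\{\varGamma:\,\varGamma\subset G\}$ is $d_{H}$-open whenever $G$ is open in $M$.
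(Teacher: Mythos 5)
Your proof is correct, but it parts ways with the paper exactly at the point you flag, and your reason for deviating is mistaken even though your substitute argument works. The paper settles both properties in one stroke: given a $d_{H}$-neighbourhood $\mathfrak{N}$ of $\varLambda$, Lemmas \hyperlink{Lemma 1}{1} and \hyperlink{Lemma 7}{7} produce a compact, connected, (+)invariant $U\in\mathcal{N}_{\varLambda}$ with $\mbox{CMin}(U)\subset H\mathcal{S}$ such that $H\mathcal{S}(U)$ is a $d_{H}$-neighbourhood of $\varLambda$ \emph{contained in} $\mathfrak{N}$, and Lemma \hyperlink{Lemma 3}{3.4} then says this neighbourhood is a continuum; so the ``naive'' approach you reject is precisely the paper's proof. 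Your worry --- that a thin $U$ around a large $\varLambda$ may contain minimal sets $d_{H}$-far from $\varLambda$ --- can indeed occur for a \emph{fixed} $U$, but it cannot persist as $U$ shrinks: if $\varGamma_{n}\subset B(\varLambda,1/n)$ with $\varGamma_{n}\in\mbox{CMin}$, then $\varGamma_{n}\overset{d_{H}}{\longrightarrow}\varLambda$ (Lemma \hyperlink{Lemma 6}{6}; by Blaschke selection any $d_{H}$-limit of the $\varGamma_{n}$ is a nonvoid compact invariant subset of $\varLambda$, hence equals $\varLambda$ by minimality). This is the content of Lemma \hyperlink{Lemma 7}{7}, which the paper quotes from [TE]. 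Your alternative for local connectedness is genuinely different and self-contained modulo Lemmas \hyperlink{Lemma 1}{1} and \hyperlink{Lemma 3}{3}: you exploit the continuity of $\omega_{U}\colon U\to[H\mathcal{S}(U),d_{H}]$ (Lemma \hyperlink{Lemma 3}{3.2}) to pull back the ball $B_{H}(\varLambda,\eta)$, use local connectedness of $M$ to extract the open connected component $V'\supset\varLambda$, and push forward to get the connected set $W=\omega_{U}(V')\subset B_{H}(\varLambda,\eta)$, which is a $d_{H}$-neighbourhood because $\{\varGamma:\,\varGamma\subset V'\}$ is $d_{H}$-open (compactness of $\varGamma$ and openness of $V'$) and lies in $W$ by minimality; membership $W\subset H\mathcal{S}$ is Lemma \hyperlink{Lemma 3}{3.1}. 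Your local compactness argument is essentially the paper's (Lemma \hyperlink{Lemma 1}{1} plus Lemma \hyperlink{Lemma 3}{3.4}, with the easy half of Lemma \hyperlink{Lemma 7}{7} checked by hand). What your route buys is independence from the one ingredient the paper imports from [TE] without proof; what it gives up is the stronger output of the paper's argument, namely arbitrarily small $d_{H}$-neighbourhoods that are simultaneously compact and connected (continua), though for the stated lemma and its use in Corollary \hyperlink{Corollary 3}{3} your two separate conclusions suffice.
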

\selectlanguage{english}%
\begin{proof}
Let $\mathfrak{N}$ be a neighbourhood of $\varLambda$ in $H\mathcal{S}$.
By Lemmas \foreignlanguage{british}{\hyperlink{Lemma 1}{1}} and \foreignlanguage{british}{\hyperlink{Lemma 7}{7}}
(below), there is \foreignlanguage{british}{a compact, connected,
(+)invariant neighbourhood $U$ of $\varLambda$ in $M$ such that
$H\mathcal{S}(U)$ is a neighbourhood of $\varLambda$ in $H\mathcal{S}$,
contained in $\mathfrak{N}.$ By Lemma \hyperlink{Lemma 3}{3.4},
$H\mathcal{S}(U)$ is $d_{H}$ compact and connected i.e. a continuum
in the $d_{H}$ metric.}
\end{proof}
\selectlanguage{british}%

\subsection{Topological detection of Lyapunov instability}

\hypertarget{Sect 3.1}{}

We can now establish criteria permitting to detect the presence of
Lyapunov unstable compact minimal sets in arbitrarily small neighbourhoods
of a given compact minimal set $\varLambda$ i.e. criteria detecting
that $\varLambda\in\mbox{cl}_{H}\mathcal{U}$. The following result
is an immediate consequence of Lemma \hyperlink{Lemma 4}{4} (as $H\mathcal{S}^{c}=\mbox{cl}_{H}\mathcal{U}$
is a closed subset of $\mbox{CMin}$). Although trivially $1\implies$2,
we list both conditions as it is often useful to have the simplest
possible sufficient conditions in mind. 
\begin{cor}
\hypertarget{Corollary 1}{}Let \emph{$\varLambda\in\mbox{CMin}$.}
If any of the following three conditions holds,\foreignlanguage{english}{
then \emph{$\varLambda\in\mbox{cl}_{H}\mathcal{U}$} i.e. every neighbourhood
of $\varLambda$ in $M$ contains Lyapunov unstable compact minimal
sets:}
\selectlanguage{english}%
\begin{enumerate}
\item \emph{$\mbox{CMin}$ }is not locally connected at $\varLambda$;
\item $\varLambda$ has no locally connected neighbourhood in \emph{$\mbox{CMin}$};
\item $\varLambda$ has no compact neighbourhood in \emph{$\mbox{CMin}$.}
\end{enumerate}
\end{cor}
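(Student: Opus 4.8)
The plan is to argue by contraposition, the whole content being carried by the single structural fact that, by its definition, $H\mathcal{S}=\mbox{int}_{H}\mathcal{S}$ is an \emph{open} subset of $[\mbox{CMin},d_{H}]$, together with Lemma \hyperlink{Lemma 4}{4}. Since $H\mathcal{S}^{c}=\mbox{cl}_{H}\mathcal{U}$ (Definition \hyperlink{Definition 4}{4}), the assertion ``$\varLambda\in\mbox{cl}_{H}\mathcal{U}$'' is precisely ``$\varLambda\notin H\mathcal{S}$''. Hence it suffices to prove the contrapositive: if $\varLambda\in H\mathcal{S}$, then each of the three conditions fails.

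So suppose $\varLambda\in H\mathcal{S}$. Because $H\mathcal{S}$ is open in $\mbox{CMin}$, it is itself a neighbourhood of $\varLambda$ in $\mbox{CMin}$, and, more importantly, every neighbourhood (resp. open set) of $\varLambda$ \emph{inside} $H\mathcal{S}$ is automatically a neighbourhood (resp. open set) of $\varLambda$ in $\mbox{CMin}$. This elementary remark is the one point on which the argument turns: the ``local'' topological properties of the open subspace $H\mathcal{S}$ at $\varLambda$ pass verbatim to $\mbox{CMin}$ at $\varLambda$.

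The three conclusions are then immediate from Lemma \hyperlink{Lemma 4}{4}. First, local compactness of $H\mathcal{S}$ at $\varLambda$ yields a compact neighbourhood of $\varLambda$ in $H\mathcal{S}$; contained in the open set $H\mathcal{S}$, it is a compact neighbourhood of $\varLambda$ in $\mbox{CMin}$, negating (3). Second, local connectedness of $H\mathcal{S}$ at $\varLambda$ provides arbitrarily small connected neighbourhoods of $\varLambda$ that are open in $H\mathcal{S}$, hence open in $\mbox{CMin}$, so that $\mbox{CMin}$ is locally connected at $\varLambda$, negating (1). Finally, since an open subspace of a locally connected space is locally connected, $H\mathcal{S}$ is a \emph{locally connected} neighbourhood of $\varLambda$ in $\mbox{CMin}$, negating (2). (Consistently with the remark $1\Rightarrow2$ preceding the statement, the negation of (1) just obtained already forces the negation of (2).)

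There is essentially no obstacle here—nothing beyond Lemma \hyperlink{Lemma 4}{4} is needed—and the only thing to verify is the routine transfer of local compactness and local connectedness across the open inclusion $H\mathcal{S}\subset\mbox{CMin}$. The reason for listing the three hypotheses separately is expository: condition (3) records the local-compactness half of Lemma \hyperlink{Lemma 4}{4}, while (1) and (2) record its local-connectedness half, so that one obtains the simplest convenient sufficient conditions for detecting $\varLambda\in\mbox{cl}_{H}\mathcal{U}$ directly from the local topology of $\mbox{CMin}$, without having to identify $H\mathcal{S}$ itself.
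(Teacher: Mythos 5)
Your proof is correct and is essentially the paper's own argument: the paper disposes of the corollary as an immediate consequence of Lemma \hyperlink{Lemma 4}{4}, its parenthetical remark ``as $H\mathcal{S}^{c}=\mbox{cl}_{H}\mathcal{U}$ is a closed subset of $\mbox{CMin}$'' being precisely the openness of $H\mathcal{S}$ in $\mbox{CMin}$ that you use to transfer local compactness and local connectedness from $H\mathcal{S}$ to $\mbox{CMin}$ at $\varLambda$. One small caveat: your closing parenthetical inverts the logic---from $1\Rightarrow 2$ one gets $\neg 2\Rightarrow\neg 1$, not $\neg 1\Rightarrow\neg 2$ (indeed, a space can be locally connected at a point while that point has no locally connected neighbourhood, e.g.\ the harmonic fan at its apex)---but this is harmless, since you negate (2) independently by exhibiting the open set $H\mathcal{S}$ itself as a locally connected neighbourhood of $\varLambda$.
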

From Corollary 1 we can draw some interesting conclusions: for instance,
if $\mbox{CMin}$ is nowhere locally connected,\emph{ }then every
neighbourhood of each compact minimal set contains infinitely many
Lyapunov unstable compact minimals. This follows observing that nowhere
locally connected sets have no isolated elements. Actually, we have
the following stronger 
\begin{description}
\item [{Criterion}] \emph{If any of the following two conditions holds,}\foreignlanguage{english}{
}\emph{then every neighbourhood of each compact minimal set of the
flow contains infinitely many Lyapunov unstable compact minimals:}

\begin{enumerate}
\item $\mbox{CMin}$ \emph{is not locally connected at a dense subset of
its points;}%
\footnote{See the Cantor star example in the introduction to this paper.%
}
\item \emph{no point of }$\mbox{CMin}$\emph{ has both a compact neighbourhood
and a locally connected neighbourhood.}
\end{enumerate}
\end{description}
\begin{defn}
\hypertarget{Definition 7}{}(\emph{attractor} / \emph{repeller})
a nonvoid, compact invariant set $\varDelta$ is an \emph{attractor
}if it is (Lyapunov) stable and 
\[
B^{+}(\varDelta):=\{x\in M:\,\emptyset\neq\omega(x)\subset\varDelta\}\in\mathcal{N}_{\varDelta}
\]
i.e. if it is asymptotically stable. $B^{+}(\varDelta)$ is the \emph{attraction
basin} of\emph{ }$\varDelta$. $\mathcal{A}\subset H\mathcal{S}$
is the set of (compact) minimal attractors. $\varDelta$ is a \emph{repeller}
if it is an attractor in the time-reversed flow. Its \emph{repulsion
basin} is defined in the obvious way. Note that, when the phase space
$M$ is compact, $M$ is always both an attractor and a repeller in
any flow. This is the only possible attractor (resp. repeller) if
the flow is non-wandering (as attractors are simultaneously stable
and isolated, see (\hyperlink{I}{1}) in Section \hyperlink{Section 2.2}{2.2}
and Definition \hyperlink{Definition 8}{8} below).
\end{defn}
Local connectedness of $H\mathcal{S}$ (Lemma \hyperlink{Lemma 4}{4}),
permits a straightforward deduction of topological-dynamical results
that otherwise seem somewhat surprising. For instance, while $\mathcal{A}\subset H\mathcal{S}$\emph{,
}hyper-stable compact minimals cannot be surrounded by attractors:\emph{ }
\begin{cor}
\hypertarget{Corollary 2}{}Let $\varLambda$ be a compact minimal
set.\foreignlanguage{english}{ If every neighbourhood of $\varLambda$
in $M$ contains an attractor (not necessarily a minimal set), then
either}
\selectlanguage{english}%
\begin{enumerate}
\item $\varLambda$ is an attractor (the unique one inside its basin), or
\item \emph{$\varLambda\in\mbox{cl}_{H}\mathcal{U}$} i.e. every neighbourhood
of $\varLambda$ in $M$ also contains Lyapunov unstable compact minimal
sets.
\end{enumerate}

Therefore, if $\varLambda\in H\mathcal{S}$ then attractors $\varGamma\neq\varLambda$
cannot occur arbitrarily near $\varLambda$. 

\end{cor}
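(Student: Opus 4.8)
The plan is to prove the dichotomy in contrapositive form: I assume the second alternative fails, i.e. $\varLambda\notin\mbox{cl}_{H}\mathcal{U}$ (so $\varLambda\in H\mathcal{S}$), and show that $\varLambda$ must then be an attractor. The guiding idea is that the only genuinely dynamical input available — that an attractor has an \emph{open} basin $B^{+}(\varDelta)$ — should be converted into a purely topological statement about $[H\mathcal{S},d_{H}]$, after which the local connectedness furnished by Lemma \hyperlink{Lemma 4}{4} will finish the argument. This is precisely the ``crossing of the bridge'' between the phase-space dynamics and the Hausdorff-metric topology advertised in the introduction.

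First I would fix, using Lemma \hyperlink{Lemma 1}{1}, an arbitrarily small compact, connected, (+)invariant neighbourhood $U$ of $\varLambda$ in $M$ with $\mbox{CMin}(U)\subset H\mathcal{S}$; by hypothesis $U$ contains an attractor $\varDelta$. The first step is a clopen argument carried out \emph{inside} the continuum $H\mathcal{S}(U)$, which is $d_{H}$-connected by Lemma \hyperlink{Lemma 3}{3.4}. Put $C_{0}=\{\varGamma\in H\mathcal{S}(U):\varGamma\subset\varDelta\}$. This set is nonempty (the compact invariant $\varDelta$ contains a minimal set, necessarily in $\mbox{CMin}(U)=H\mathcal{S}(U)$); it is $d_{H}$-closed, since a Hausdorff limit of minimal sets lying in the closed set $\varDelta$ again lies in $\varDelta$; and it is $d_{H}$-open, because $C_{0}=\{\varGamma\in H\mathcal{S}(U):\varGamma\subset B^{+}(\varDelta)\}$ with $B^{+}(\varDelta)$ open (any minimal set in the basin has all its points converging into $\varDelta$, hence lies in $\varDelta$). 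Connectedness of $H\mathcal{S}(U)$ then forces $C_{0}=H\mathcal{S}(U)$, so \emph{every} compact minimal set contained in $U$ lies in $\varDelta$. Since moreover each $x\in U$ has $\omega(x)\in H\mathcal{S}(U)\subset\varDelta$ by Lemma \hyperlink{Lemma 3}{3.1}, one gets $U\subset B^{+}(\varDelta)$, and therefore $H\mathcal{S}(U)=\mbox{CMin}\big(B^{+}(\varDelta)\big)$.

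The crucial second step is that this identity makes $H\mathcal{S}(U)$ \emph{open} in $H\mathcal{S}$: for any open $O\subset M$ the family $\mbox{CMin}(O)$ is $d_{H}$-open (if a compact $\varGamma\subset O$, then a whole $d_{H}$-ball about $\varGamma$ still consists of minimal sets contained in $O$), and here $O=B^{+}(\varDelta)$ is open. Thus $H\mathcal{S}(U)$ is at once $d_{H}$-compact (hence closed), open, and connected — a clopen connected subset of $H\mathcal{S}$ containing $\varLambda$. As $H\mathcal{S}$ is locally connected (Lemma \hyperlink{Lemma 4}{4}), the component $X$ of $\varLambda$ in $H\mathcal{S}$ is itself clopen, so the clopen connected set $H\mathcal{S}(U)$ must coincide with $X$. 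Because this holds for \emph{every} such arbitrarily small $U$, I get $X^{*}=\mbox{CMin}(U)^{*}\subset U$ for all of them; letting $U$ run through a neighbourhood basis of $\varLambda$ yields $X^{*}\subset\bigcap U=\varLambda$, whence $X^{*}=\varLambda$ and $X=\{\varLambda\}$. So $\varLambda$ is isolated in $H\mathcal{S}$, meaning $\mbox{CMin}(U)=\{\varLambda\}$ for small $U$; then by Lemma \hyperlink{Lemma 3}{3.1} every point of $U$ has $\omega$-limit $\varLambda$, i.e. $U\subset B^{+}(\varLambda)$ and $\varLambda$ is an attractor — alternative (1). For the uniqueness clause and the final assertion I would argue that any attractor $\varGamma\subset B^{+}(\varLambda)$ has all its points converging to $\varLambda$, so $\varLambda$ is its only minimal set; choosing a small $U\subset B^{+}(\varLambda)$ (Lemma \hyperlink{Lemma 1}{1}) with $\varGamma\subset U$, any $x\in\varGamma\setminus\varLambda$ would satisfy $\mathcal{O}^{-}(x)\subset\varGamma\subset U$, contradicting Lemma \hyperlink{Lemma 3}{3.3}; hence $\varGamma=\varLambda$, so no attractor $\varGamma\neq\varLambda$ can lie arbitrarily near a hyper-stable $\varLambda$.

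The step I expect to be the main obstacle is exactly the passage in the third paragraph from the \emph{dynamical} openness of the basin $B^{+}(\varDelta)$ to the \emph{topological} openness of $H\mathcal{S}(U)$ in $[\mbox{CMin},d_{H}]$, together with the identification $H\mathcal{S}(U)=\mbox{CMin}\big(B^{+}(\varDelta)\big)$; it is this translation that allows the local connectedness of $H\mathcal{S}$ to be leveraged into the conclusion $X=\{\varLambda\}$. Everything else — the clopen argument of the second paragraph and the final application of Lemma \hyperlink{Lemma 3}{3.3} — is routine once this bridge is in place.
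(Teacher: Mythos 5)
Your proof of the main dichotomy is correct, and it rests on the same key mechanism as the paper's: the basin $B^{+}$ of an attractor is open, every compact minimal inside the basin lies inside the attractor, so the relevant family of minimal sets is $d_{H}$-clopen, and connectedness (supplied by the $H\mathcal{S}$ structure theory) forces collapse to $\{\varLambda\}$. The organisation differs, though. The paper takes an arbitrary connected neighbourhood $\mathfrak{N}$ of $\varLambda$ in $\mbox{CMin}$ (Lemma \hyperlink{Lemma 4}{4}) and, via Lemma \hyperlink{Lemma 6}{6}, runs the clopen argument only on the component of \emph{one} minimal $\varGamma$ inside the attractor $Q$, concluding that $\mathfrak{N}$ has zero $d_{H}$-diameter; attractorness of the isolated stable $\varLambda$ is then obtained by a forward reference to the corollary ``isolated stable minimal $\Rightarrow$ attractor''. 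You instead run the clopen argument inside the continuum $H\mathcal{S}(U)$ of Lemma \hyperlink{Lemma 3}{3.4}, which yields the stronger intermediate facts that \emph{every} minimal in $U$ --- including $\varLambda$ itself --- lies inside the attractor $\varDelta$ and that $U\subset B^{+}(\varDelta)$; you then identify $H\mathcal{S}(U)$ with the whole component $X$ of $\varLambda$ and shrink $U$, and attractorness falls out of Lemma \hyperlink{Lemma 3}{3.1} directly, with no forward reference. Both routes are sound; yours is somewhat more self-contained and gives the extra information that $\varLambda\subset\varDelta$ for every attractor $\varDelta$ near $\varLambda$.

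One step of yours is genuinely broken, however: the justification of the parenthetical ``(the unique one inside its basin)''. You ask for ``a small $U\subset B^{+}(\varLambda)$ (Lemma \hyperlink{Lemma 1}{1}) with $\varGamma\subset U$'', but Lemma \hyperlink{Lemma 1}{1} only produces \emph{arbitrarily small} neighbourhoods of $\varLambda$; an attractor $\varGamma$ sitting deep inside $B^{+}(\varLambda)$ but far from $\varLambda$ need not be contained in any such $U$, so the two requirements ``small'' and ``$\supset\varGamma$'' are incompatible for it. As written, your argument covers only attractors contained in small neighbourhoods of $\varLambda$ --- which does suffice for the final ``Therefore...'' sentence, but not for uniqueness in the whole basin. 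The repair is short: for any attractor $\varGamma\subset B^{+}(\varLambda)$ and any $x\in\varGamma\setminus\varLambda$, the set $\alpha(x)\subset\varGamma$ is nonvoid and compact (as $\mathcal{O}^{-}(x)\subset\varGamma$), hence contains a compact minimal set, which, lying in the basin, must equal $\varLambda$; since $\varLambda$ is stable, Lemma \hyperlink{Lemma 2}{2.3} gives $x\in\varLambda$, a contradiction, so $\varGamma=\varLambda$. (To be fair, the paper's own proof leaves this clause implicit as well.)
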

\selectlanguage{british}%
\begin{rem}
\selectlanguage{british}%
The existence of a sequence of attractors $\mbox{\ensuremath{\varDelta}}_{n}\neq\varLambda\in\mbox{CMin}$
such that $\varDelta_{n}\subset B(\varLambda,1/n)\subset M$ thus
implies the existence of another sequence $\mathcal{U}\ni\varGamma_{n}\overset{d_{H}}{\longrightarrow}\varLambda$.\end{rem}
\begin{proof}
Assume that $\varLambda\not\in\mbox{cl}_{H}\mathcal{U}$. By Lemma
\hyperlink{Lemma 4}{4} $\varLambda$ has connected neighbourhoods
in $\mbox{CMin}$. We show that the only possible such neighbourhood
is $\{\varLambda\}$ i.e. $\varLambda$ is $d_{H}$ isolated in $\mbox{CMin}$,
hence $\varLambda$ is an attractor (see Corollary \hyperlink{Corollary 5}{5}
ahead). Let $\mathfrak{N}$ be a connected neighbourhood of $\varLambda$
in $\mbox{CMin}$. \foreignlanguage{english}{We claim that given $\epsilon>0$,
there is a compact minimal set $\varGamma$ such that $d_{H}(\varGamma,\varLambda)<\epsilon$
and the connected component of $\varGamma$ in $\mbox{CMin}$ has
$d_{H}$ diameter less than $\epsilon$. Since for $\epsilon$ sufficiently
small $\varGamma$ belongs to $\mathfrak{N}$, the $d_{H}$ diameter
of $\mathfrak{N}$ must be zero i.e. $\mathfrak{N}=\{\varLambda\}$.
To prove the claim: given $\epsilon>0$, by Lemma }\hyperlink{Lemma 6}{6}
there is an open neighbourhood $U$ of $\varLambda$ in $M$ such
that $d_{H}(\varGamma,\varLambda)<\epsilon/2$ for every compact minimal
set $\varGamma\subset U$. By hypothesis, there is an attractor $Q\subset U$
and $Q$ contains at least one $\varGamma\in\mbox{CMin}$. Clearly
the attraction basin $B(Q)$ is an open invariant set containing $Q$
such that $B(Q)\setminus Q$ intersects no compact minimal. Therefore,
all compact minimals belonging to the connected component $\varTheta$
of $\varGamma$ in $\mbox{CMin}$ must be contained in $Q$, hence
$\mbox{diam}_{H}\varTheta<\epsilon$.
\end{proof}

\subsection{$H\mathcal{S}$ is locally a Peano continuum}

\selectlanguage{english}%
One of our main goals is to prove that $H\mathcal{S}$ is locally
a Peano continuum. Observe that Lemma \foreignlanguage{british}{\hyperlink{Lemma 4}{4}}
does not prove this i.e. it does not show that each $\varLambda\in H\mathcal{S}$
has arbitrarily small compact, connected and locally connected neighbourhoods
in $H\mathcal{S}$ (in relation to the Hausdorff metric $d_{H}$).
While, by Lemma \foreignlanguage{british}{\hyperlink{Lemma 1}{1}},
$\varLambda$ has arbitrarily small compact, connected, (+)invariant
neighbourhoods $U$ in $M$ such that $\mbox{CMin}(U)\subset H\mathcal{S}$,
implying, by Lemma \foreignlanguage{british}{\hyperlink{Lemma 3}{3.4}},
that $H\mathcal{S}(U)$ is a continuum and thus (Lemma \foreignlanguage{british}{\hyperlink{Lemma 4}{4}})
that $H\mathcal{S}$ is locally a continuum, the difficulty of following
this approach lies in guarantying that $U$ is locally connected at
every $x\in\mbox{bd\,}U$. We overcome this difficulty taking a more
direct path: we show, through Peano Continuum Theory, that $H\mathcal{S}$
is locally a Peano continuum.
\selectlanguage{british}%
\begin{lem}
\hypertarget{Lemma 5}{}\foreignlanguage{english}{Generalized Peano
continua are locally Peano continua.}\end{lem}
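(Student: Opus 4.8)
The plan is to realize the required Peano continuum neighbourhoods as small \emph{closed metric balls} in a geodesic metric. First I would fix the complete geodesic metric $d$ on $M$ furnished by Remark \hyperlink{Remark 1.c}{1.c}, for which every two points are joined by a geodesic and, by Hopf--Rinow--Cohn--Vossen, closed balls are compact. Given $x\in M$ and a neighbourhood $W$ of $x$, choose $r>0$ with $B(x,2r)\subset W$; then $N:=\{y\in M:\,d(x,y)\le r\}$ is a compact neighbourhood of $x$ with $N\subset B(x,2r)\subset W$, and as $r\to 0$ these balls form a neighbourhood basis at $x$. Moreover $N$ is connected: for $y\in N$ a geodesic $\varphi:[0,d(x,y)]\hookrightarrow M$ from $x$ to $y$ satisfies $d(x,\varphi(s))=s\le d(x,y)\le r$, so $\varphi$ remains in $N$ and joins $y$ to $x$ inside $N$. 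Thus each small $N$ is a continuum lying in $W$, and it only remains to prove that $N$ is \emph{locally connected}, for then by the Hahn--Mazurkiewicz theorem (Remark 1.b) $N$ is a Peano continuum and we are done.

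To attack local connectedness I would first record the uniform local connectedness of $M$ on compacta: since $M$ is locally connected, applying a Lebesgue--number argument to a finite cover of the compact set $\overline{B}(x,2r)$ by connected open sets of diameter $<\epsilon$ yields, for each $\epsilon>0$, a $\delta>0$ such that any two points of $\overline{B}(x,2r)$ at distance $<\delta$ lie together in a connected subset of $M$ of diameter $<\epsilon$. These small connecting sets are exactly what one needs to exhibit local connectedness of $N$ at each of its points, equivalently to verify that $N$ has \emph{property $S$} (a finite cover, for every $\epsilon>0$, by connected subsets of $N$ of diameter $<\epsilon$); property $S$ together with connectedness is precisely what characterizes a Peano continuum among continua.

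The hard part is the confinement of these connecting sets at the boundary sphere $\{y:\,d(x,y)=r\}$ of $N$: a connected set joining two near-boundary points may protrude slightly out of $N$, and because geodesics in a generalized Peano continuum need not be unique (Remark \hyperlink{Remark 1.c}{1.c}), the naive radial retraction sending each protruding point inward along a geodesic issuing from $x$ need not be continuous, so one cannot simply trim the connecting sets back into $N$ — and pushing them all the way to $x$ restores connectedness only at the cost of making the diameter as large as $r$. This confinement at the boundary is where a fundamental result of Peano continuum theory is indispensable: I would invoke it to manufacture, inside $N$, the finite covers by small subcontinua witnessing property $S$ (equivalently, the small connected neighbourhoods within $N$ of each boundary point), thereby upgrading the compact connected ball $N$ to a genuine Peano continuum. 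This boundary step is the crux of the argument; everything else is the routine geodesic bookkeeping of the first paragraph, and it seems hard to secure local connectedness of $N$ without such a Peano--continuum--theoretic input.
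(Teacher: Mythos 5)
Your first paragraph is fine: with the complete geodesic metric of Remark \hyperlink{Remark 1.c}{1.c}, the closed balls $N=\{y:\,d(x,y)\le r\}$ are indeed compact (Hopf--Rinow--Cohn--Vossen), connected (each $y\in N$ is joined to $x$ by a geodesic which, since $d(x,\varphi(s))=s$, stays in $N$), and form a neighbourhood basis at $x$. The genuine gap is that you never prove the step you yourself identify as the crux: that $N$ is locally connected. Worse, the way you propose to close it is circular. The ``fundamental result of Peano continuum theory'' you gesture at --- producing, for every $\epsilon>0$, a finite cover of $N$ by connected subsets of $N$ of diameter $<\epsilon$, i.e.\ property $S$ --- is exactly the covering theorem \cite[p.124, 8.10]{NA} that the paper uses, and its \emph{hypothesis} is that the space being covered is a Peano continuum (equivalently, locally connected). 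For your ball $N$ that hypothesis is precisely what is unknown; no theorem of continuum theory manufactures property $S$ for an arbitrary subcontinuum, since plenty of subcontinua of Peano continua (e.g.\ the topologist's sine curve inside $\mathbb{D}^2$) fail it. The uniform local connectedness of $M$ on compacta, which you do establish correctly, only yields connecting sets that may protrude from $N$ --- as you concede --- and nothing in your outline controls that protrusion. So the proposal, as written, proves nothing beyond compactness and connectedness of $N$.

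The gap is fillable, but by an argument of a different nature, not by quoting continuum theory: one shows directly that closed balls in a proper geodesic space are locally path connected. Sketch: if $N$ were not locally connected at $y$ with $d(x,y)=r$, take $z_n\to y$ in $N$ admitting no small connecting path; the unit-speed geodesics $\gamma_n$ from $x$ to $z_n$ lie in the compact set $N$ and are $1$-Lipschitz, so by Arzel\`a--Ascoli a subsequence converges uniformly to a geodesic $\gamma$ from $x$ to $y$. For large $n$ one then connects $z_n$ to $y$ inside $N$ by a path of diameter $O(\epsilon)$: backtrack along $\gamma_n$ by $\epsilon$, jump across to $\gamma$ by a geodesic of length $<\epsilon/10$ (this segment stays in $N$ because its endpoints lie at distance $\le r-\epsilon+\epsilon/10$ from $x$, so the bulge cannot reach the sphere), and follow $\gamma$ into $y$ --- a contradiction. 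Note that this ``backtrack along each point's own radial geodesic, then transfer to the limit geodesic'' device is exactly what replaces the naive (and, as you note, discontinuous) radial retraction. The paper avoids all of this: it applies the covering theorem to a space already known to be Peano --- the ambient continuum itself, after a one-point compactification in the noncompact case --- and takes as the neighbourhood of $x$ the union of the finitely many small Peano pieces containing $x$, whose local connectedness is then elementary. Either route works; yours, as submitted, stops exactly where the proof has to begin.
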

\selectlanguage{english}%
\begin{proof}
We first show that 

(1) \emph{Peano continua are locally Peano continua}: let $X$ be
a Peano continuum. Given $x\in X$ and $U\in\mathcal{N}_{x}$, take
$\epsilon>0$ such that $B(x,\epsilon)\subset U$. $X$ is the union
of finitely many Peano continua $X_{i}$ with $\mbox{diam\,}X_{i}<\epsilon$
\cite[p.124, 8.10]{NA}. Let $V$ be the union of the $X_{i}$'s that
contain $x$, say $V=\cup_{i=1}^{^{n}}X_{i}$. $V$ is compact and
connected (i.e. a continuum) and $U\supset V\in\mathcal{N}_{x}$,
since $V^{c}$ is contained in a compact not containing $x$.
\selectlanguage{british}%
\begin{claim*}
$V$ \emph{is locally connected}: given $z\in V$ and a neighbourhood
$W$ of $z$ in $V$, let $\beta$ be the set of the $1\leq i\leq n$
such that $z\in X_{i}$. For each $i\in\beta$ take a connected neighbourhood
$C_{i}$ of $z$ in $X_{i}$ contained in $W$. Since the $X_{i}$'s
are compact and finite in number, it is easily seen that $C:=\cup_{i\in\beta}C_{i}\subset W$
is a connected neighbourhood of $z$ in $V$. Therefore, besides compact
and connected, $V$ is locally connected and thus a Peano continuum,
hence (1) is proved.
\end{claim*}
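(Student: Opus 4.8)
The plan is to verify local connectedness of $V$ pointwise, exploiting that each piece $X_i$ is itself a Peano continuum (hence locally connected) and that there are only finitely many pieces $X_1,\dots,X_n$ making up $V$. Fix an arbitrary $z\in V$ and a neighbourhood $W$ of $z$ in $V$; the goal is to produce a \emph{connected} neighbourhood $C$ of $z$ in $V$ with $C\subset W$. First I would isolate the pieces that actually contain $z$ by setting $\beta=\{\,i\in\{1,\dots,n\}:z\in X_i\,\}$. Since $z\in V$, $\beta\neq\emptyset$; and for each $j\notin\beta$ the point $z$ lies outside the compact (hence closed) set $X_j$.

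For each $i\in\beta$, local connectedness of $X_i$ supplies a connected neighbourhood $C_i$ of $z$ in $X_i$ contained in $W$. I would then take $C:=\bigcup_{i\in\beta}C_i$. Because every $C_i$ contains the common point $z$, the union $C$ is connected, and $C\subset W$ is immediate.

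The one step needing care is that $C$ is genuinely a neighbourhood of $z$ \emph{in} $V$, rather than just a relative neighbourhood within each separate piece. I would establish this by showing $z\notin\mathrm{cl}_V(V\setminus C)$. Writing $V\setminus C=\bigcup_{j=1}^{n}(X_j\setminus C)$, I treat the two types of index: for $j\notin\beta$ the closure of $X_j\setminus C$ sits inside the closed set $X_j$, which omits $z$; for $j\in\beta$, since $C_j$ is an $X_j$-neighbourhood of $z$ we have $z\notin\mathrm{cl}_{X_j}(X_j\setminus C_j)$, and $X_j\setminus C\subset X_j\setminus C_j$. Finiteness of the index set then lets me pass the closure through the union, giving $z\notin\mathrm{cl}_V(V\setminus C)$; consequently $V\setminus\mathrm{cl}_V(V\setminus C)$ is a $V$-open neighbourhood of $z$ contained in $C$. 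Thus $C$ is the desired connected neighbourhood of $z$ in $V$ inside $W$, so $V$ is locally connected at $z$, and, $z$ being arbitrary, everywhere.

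The expected obstacle is precisely this last verification, where finiteness of the cover is essential: for an infinite family the passage from ``neighbourhood in each piece'' to ``neighbourhood in the union'' may fail. Once local connectedness is secured, $V$ is compact, connected and locally connected, hence a Peano continuum, which is exactly what is needed to close subclaim (1).
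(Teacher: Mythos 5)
Your proposal is correct and follows exactly the paper's argument: the same index set $\beta$, the same connected neighbourhoods $C_i$ of $z$ in each $X_i$, and the same union $C=\cup_{i\in\beta}C_i$. The only difference is that you spell out the step the paper dismisses as ``easily seen'' (that $C$ is a genuine neighbourhood of $z$ in $V$, via $z\notin\mathrm{cl}_V(V\setminus C)$ and finiteness of the cover), which is a faithful filling-in rather than a different route.
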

(2) \emph{Generalized Peano continua are locally Peano continua}:
let $X$ be a generalized Peano continuum. Since (2) coincides with
(1) if $X$ is compact assume it is not. Let $X^{\propto}=X\sqcup\{\infty\}$
be a 1-point compactification of $X$ (on $X$, the metric $d'$ of
$X^{\propto}$ is equivalent to the original metric $d$ of $X$).
Then $X^{\propto}$ is compact, connected and locally connected at
every point $x\in X=X^{\propto}\setminus\{\infty\}$ and thus also
at $\infty$ \cite[p.78, 5.13]{NA}. Hence $X^{\propto}$ is a Peano
continuum, therefore, by (1), $X^{\propto}$ is locally a Peano continuum
and so is its open subset $X$.\end{proof}
\selectlanguage{british}%
\begin{cor}
\hypertarget{Corollary 3}{}$H\mathcal{S}$ is
\begin{enumerate}
\item locally a Peano continuum,
\item the union of countably many disjoint, clopen, generalized Peano continua.
\end{enumerate}
\end{cor}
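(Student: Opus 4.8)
The plan is to establish both assertions directly from Lemma 4 and Lemma 5, treating the two parts in turn. For part (1), I would argue that $H\mathcal{S}$ is locally a Peano continuum by combining the local compactness and local connectedness of $H\mathcal{S}$ (Lemma 4) with the purely topological fact that such spaces are locally Peano (Lemma 5). The subtle point is that Lemma 5 is stated for generalized Peano continua, i.e.\ for spaces that are \emph{globally} connected, locally compact and locally connected; whereas $H\mathcal{S}$ need not be connected. So the first thing I would do is note that local compactness and local connectedness are inherited by $H\mathcal{S}$ directly from Lemma 4, and that these are local properties, so I may work component by component: each connected component of $H\mathcal{S}$ is itself locally compact and locally connected, hence a generalized Peano continuum, to which Lemma 5 applies verbatim. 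This immediately yields that every $\varLambda\in H\mathcal{S}$ has arbitrarily small neighbourhoods in $H\mathcal{S}$ that are Peano continua, proving (1).

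For part (2), I would first observe that $H\mathcal{S}=\mbox{int}_{H}\mathcal{S}$ is an open subset of $\mbox{CMin}$, and that by part (1) it is locally connected. A standard fact is that in a locally connected space the connected components coincide with the quasi-components and are \emph{open} (each point has a connected neighbourhood lying inside its component); since they are also closed, as components always are, each component is clopen. By Lemma 4 each point has a neighbourhood in $H\mathcal{S}$ that is $d_{H}$-compact and connected, so each component is locally compact and locally connected, i.e.\ a generalized Peano continuum, establishing the ``clopen generalized Peano continuum'' claim for each component. It remains to verify countability of the family of components.

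For countability I would exploit separability. The ambient space $M$ is separable (as noted in the Convention paragraph, $|M|=\mathfrak{c}$ via connectedness plus local compactness), and separability of $M$ transfers to $[\mbox{CMin},d_{H}]$: a countable dense subset of $M$ induces, via the Hausdorff metric, a countable family of finite unions of small balls that is dense among compact subsets, so $\mbox{CMin}$ is $d_{H}$-separable and hence so is its subspace $H\mathcal{S}$. A separable metric space cannot have uncountably many disjoint nonempty open sets, for each would have to contain a point of any countable dense set, which is impossible; since the components of $H\mathcal{S}$ are pairwise disjoint open sets, there can be only countably many of them. Assembling these observations gives that $H\mathcal{S}$ is the disjoint union of countably many clopen generalized Peano continua, which is (2).

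The main obstacle I anticipate is the passage from the \emph{global} hypotheses of Lemma 5 to the possibly disconnected $H\mathcal{S}$: one must be careful that ``locally a Peano continuum'' is genuinely a local property and that restricting to a component does not destroy local compactness or local connectedness. Both are preserved because the components are open, so neighbourhoods in a component are neighbourhoods in $H\mathcal{S}$; once this is pinned down the rest is routine point-set topology. A secondary point to handle cleanly is the separability argument for countability, but this is standard and the only genuine input needed is the separability of $[\mbox{CMin},d_{H}]$, which follows from that of $M$.
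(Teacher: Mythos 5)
Your proposal is correct and follows essentially the same route as the paper: Lemma 4 gives local connectedness (hence open, clopen components) and local compactness, each component is then a generalized Peano continuum to which Lemma 5 applies, and countability follows from $d_{H}$-separability plus disjointness of the open components. The only cosmetic difference is that the paper cites a lemma from \cite{TE} for the separability of $[\mbox{CMin},d_{H}]$, whereas you sketch the standard hyperspace argument (finite subsets of a countable dense set of $M$ are $d_{H}$-dense), which is a perfectly adequate substitute.
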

\begin{proof}
By Lemma \hyperlink{Lemma 4}{4}, $H\mathcal{S}$ is $d_{H}$ locally
connected, thus each component is open. As $H\mathcal{S}$ is also
$d_{H}$ locally compact, each component is actually a generalized
Peano continuum. Hence, (1) follows from Lemma \hyperlink{Lemma 5}{5}.
It remains to show that $H\mathcal{S}$ has countably many components.
$H\mathcal{S}$ is $d_{H}$ separable, since $\mbox{CMin}\supset H\mathcal{S}$
is (see \cite[p.258, Lemma 10]{TE}). The conclusion follows, since
the components of $H\mathcal{S}$ are open. 
\end{proof}
\selectlanguage{english}%
The next result is valid\foreignlanguage{british}{ for $C^{0}$ flows
on locally compact metric spaces.}
\selectlanguage{british}%
\begin{lem}
\hypertarget{Lemma 6}{}\foreignlanguage{english}{Let \emph{$\varLambda_{n\geq0}\in\mbox{CMin}$$.$}
Suppose that for any $\epsilon>0$, there is an $m\geq1$ such that
$n>m\implies\varLambda_{n}\subset B(\varLambda_{0},\epsilon)$. Then,
$\varLambda_{n}\overset{d_{H}}{\longrightarrow}\varLambda_{0}$. }\end{lem}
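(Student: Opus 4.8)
The plan is to exploit the asymmetry in the hypothesis. The condition $\varLambda_n\subset B(\varLambda_0,\epsilon)$ controls only one of the two terms in the Hausdorff distance, namely $\sup_{x\in\varLambda_n}d(x,\varLambda_0)$, and it already forces $\sup_{x\in\varLambda_n}d(x,\varLambda_0)\to 0$. The genuine content is therefore the reverse estimate $\sup_{y\in\varLambda_0}d(y,\varLambda_n)\to 0$, i.e. that the $\varLambda_n$ eventually approximate \emph{every} point of $\varLambda_0$. I would obtain this by a compactness-plus-minimality argument. First, since $M$ is locally compact and $\varLambda_0$ is compact, fix $\epsilon_0>0$ so small that $W:=\overline{B(\varLambda_0,\epsilon_0)}$ is compact. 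Applying the hypothesis with $\epsilon=\epsilon_0$ yields an $m$ with $\varLambda_n\subset W$ for all $n>m$, so the tail of the sequence lies in the $d_H$-compact space $\mbox{Ci}(W)$ of nonvoid compact invariant subsets of $W$ (Remark 4).

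Next I would run the standard subsequence principle. Let $(\varLambda_{n'})$ be an arbitrary subsequence. By $d_H$-compactness of $\mbox{Ci}(W)$ there is a further subsequence $\varLambda_{n''}\overset{d_H}{\longrightarrow}\varGamma$ with $\varGamma\in\mbox{Ci}(W)$; in particular $\varGamma$ is nonvoid, compact and \emph{invariant}. On the other hand, for every $\epsilon>0$ the hypothesis gives $\varLambda_{n''}\subset\overline{B(\varLambda_0,\epsilon)}$ for all large $n''$, whence the $d_H$-limit satisfies $\varGamma\subset\overline{B(\varLambda_0,\epsilon)}$; intersecting over $\epsilon>0$ and using that $\varLambda_0$ is closed gives $\varGamma\subset\varLambda_0$. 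Thus $\varGamma$ is a nonvoid compact invariant subset of the minimal set $\varLambda_0$, and minimality forces $\varGamma=\varLambda_0$.

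Finally, since every subsequence of $(\varLambda_n)$ admits a further subsequence $d_H$-converging to the \emph{same} limit $\varLambda_0$, the whole sequence converges: $\varLambda_n\overset{d_H}{\longrightarrow}\varLambda_0$. The single load-bearing step is the identification $\varGamma=\varLambda_0$: this is where minimality of $\varLambda_0$ is indispensable, as it is precisely what upgrades the one-sided inclusions into genuine Hausdorff convergence. The mechanism that makes minimality usable is that a $d_H$-limit of invariant sets is again invariant (encoded here in the closedness of $\mbox{Ci}(W)$), so the limit $\varGamma$ cannot be a proper invariant piece of $\varLambda_0$. Note that no connectedness or local connectedness of $M$ is used, consistent with the stated validity for arbitrary locally compact metric phase spaces.
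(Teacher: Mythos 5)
Your proof is correct. Be aware, though, that there is nothing in this paper to compare it against: the paper's ``proof'' of Lemma 6 is literally the citation ``see \cite[p.255, Lemma 4]{TE}'', so the argument is outsourced to the author's earlier paper, and what you have produced is a self-contained replacement for it. Your route uses only tools the present paper already provides: local compactness of $M$ to get a compact set $W=\overline{B(\varLambda_{0},\epsilon_{0})}$ trapping the tail of the sequence, $d_{H}$-compactness of $\mbox{Ci}(W)$ (Remark 4, the Blaschke principle), the subsequence principle, and minimality of $\varLambda_{0}$ in the form ``no proper nonvoid compact invariant subsets''. This is exactly the mechanism the paper itself employs elsewhere (e.g.\ in the proof of Lemma 3(2) and in part (B) of the discussion of Problem 2), so your argument is entirely in its spirit, and every delicate step checks out: small closed neighbourhoods of a compact set in a locally compact metric space are compact; a $d_{H}$-limit of sets contained in the closed set $\overline{B(\varLambda_{0},\epsilon)}$ stays inside it, so intersecting over $\epsilon$ gives $\varGamma\subset\varLambda_{0}$; and invariance of $\varGamma$ plus minimality of $\varLambda_{0}$ forces $\varGamma=\varLambda_{0}$. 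Two remarks. First, you never use minimality of the sets $\varLambda_{n\geq1}$, only that they are nonvoid, compact and invariant, so you in fact prove a slightly stronger statement. Second, there is a more elementary route avoiding Blaschke, which may well be the one in \cite{TE}: if $\sup_{y\in\varLambda_{0}}d(y,\varLambda_{n})\not\to0$, pick $y_{n}\in\varLambda_{0}$ with $d(y_{n},\varLambda_{n})\geq\delta$ and $y_{n}\to y$, pick $x_{n}\in\varLambda_{n}$ with $d(x_{n},\varLambda_{0})\to0$ and (after passing to a subsequence) $x_{n}\to x\in\varLambda_{0}$; by minimality the orbit $\mathcal{O}(x)$ is dense in $\varLambda_{0}$, so some $x^{\,t}$ is close to $y$, and flow continuity puts $x_{n}^{\,t}\in\varLambda_{n}$ close to $y$ for large $n$, a contradiction. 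That version trades your ``no proper subinvariants'' use of minimality for ``orbit density'', at the cost of a less structural, more hands-on estimate; yours has the advantage of isolating the single abstract fact (closedness of invariance under $d_{H}$-limits) that makes the lemma work.
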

\selectlanguage{english}%
\begin{proof}
see \cite[p.255, Lemma 4]{TE}.
\end{proof}
From Lemma \foreignlanguage{british}{\hyperlink{Lemma 6}{6}} we easily
deduce
\selectlanguage{british}%
\begin{lem}
\hypertarget{Lemma 7}{}\foreignlanguage{english}{Given any neighbourhood
$\mathfrak{N}$ of $\varLambda\in H\mathcal{S}$ in $[H\mathcal{S},\, d_{H}]$,
for every sufficiently small neighbourhood $U$ of $\varLambda$ in
$M$, $H\mathcal{S}(U)$ is a neighbourhood of $\varLambda$ in $H\mathcal{S}$,
contained in $\mathfrak{N}$. If $U$ is open in $M$, then $H\mathcal{S}(U)$
is $d_{H}$ open in $H\mathcal{S}$.}\end{lem}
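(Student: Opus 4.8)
The plan is to reduce the whole statement to one elementary property of the Hausdorff metric together with a single application of Lemma \hyperlink{Lemma 6}{6} for the one genuinely nontrivial point. The elementary property is the pair of observations that (i) if $\varGamma\in\mbox{CMin}$ satisfies $d_{H}(\varGamma,\varLambda)<\delta$ then $\varGamma\subset B(\varLambda,\delta)$, and (ii) since $\varLambda$ is compact, every $U\in\mathcal{N}_{\varLambda}$ contains some $B(\varLambda,\delta)$ with $\delta>0$ (the function $x\mapsto d(x,U^{c})$ is continuous and, since $\varLambda\subset\mbox{int}\,U$, strictly positive on the compact set $\varLambda$, hence bounded below). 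These two facts let me translate freely between ``$d_{H}$-close to $\varLambda$'' and ``contained in a neighbourhood of $\varLambda$ in $M$''.

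First I would show that $H\mathcal{S}(U)$ is a $d_{H}$-neighbourhood of $\varLambda$ in $H\mathcal{S}$ for \emph{every} $U\in\mathcal{N}_{\varLambda}$. Picking $\delta>0$ with $B(\varLambda,\delta)\subset U$, any $\varGamma\in H\mathcal{S}$ with $d_{H}(\varGamma,\varLambda)<\delta$ lies in $B(\varLambda,\delta)\subset U$ and so belongs to $H\mathcal{S}(U)$; thus $H\mathcal{S}(U)$ contains the relative $d_{H}$-ball of radius $\delta$ about $\varLambda$ and is a neighbourhood. The last sentence of the lemma is the same argument carried out at each point: if $U$ is open in $M$ and $\varGamma\in H\mathcal{S}(U)$, then $U$ is a neighbourhood of the compact set $\varGamma$, so $B(\varGamma,\delta')\subset U$ for some $\delta'>0$, and every $\varGamma'\in H\mathcal{S}$ with $d_{H}(\varGamma',\varGamma)<\delta'$ lies in $U$; hence $H\mathcal{S}(U)$ is $d_{H}$-open in $H\mathcal{S}$.

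The step where ``sufficiently small'' is genuinely needed, and the main obstacle, is the inclusion $H\mathcal{S}(U)\subset\mathfrak{N}$. The difficulty is that $\varGamma\subset U$ with $U$ small controls only one side of the Hausdorff distance (that $\varGamma$ sits near $\varLambda$), not the other (that $\varLambda$ is covered by $\varGamma$): a small minimal set could a priori cluster near a single point of $\varLambda$ while staying $d_{H}$-far from it. I would remove this possibility by contradiction through Lemma \hyperlink{Lemma 6}{6}. Since $\mathfrak{N}$ is a $d_{H}$-neighbourhood of $\varLambda$ in $H\mathcal{S}$, fix $\epsilon_{0}>0$ with $\{\varGamma\in H\mathcal{S}:d_{H}(\varGamma,\varLambda)<\epsilon_{0}\}\subset\mathfrak{N}$. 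If no $B(\varLambda,\delta)$ had the property that every hyper-stable $\varGamma\subset B(\varLambda,\delta)$ satisfies $d_{H}(\varGamma,\varLambda)<\epsilon_{0}$, I could extract $\varGamma_{n}\in H\mathcal{S}$ with $\varGamma_{n}\subset B(\varLambda,1/n)$ and yet $d_{H}(\varGamma_{n},\varLambda)\geq\epsilon_{0}$. But $\varGamma_{n}\subset B(\varLambda,1/n)$ means that for each $\epsilon>0$ one has $\varGamma_{n}\subset B(\varLambda,\epsilon)$ for all large $n$, so Lemma \hyperlink{Lemma 6}{6} forces $\varGamma_{n}\overset{d_{H}}{\longrightarrow}\varLambda$, contradicting $d_{H}(\varGamma_{n},\varLambda)\geq\epsilon_{0}$. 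Hence a fixed $\delta>0$ exists for which every hyper-stable minimal contained in $B(\varLambda,\delta)$ already lies in $\mathfrak{N}$.

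Finally I would assemble the pieces. Declaring $U$ ``sufficiently small'' to mean $U\subset B(\varLambda,\delta)$ for the $\delta$ just produced, the second paragraph shows that such a $U$ has $H\mathcal{S}(U)$ a $d_{H}$-neighbourhood of $\varLambda$ in $H\mathcal{S}$, while $H\mathcal{S}(U)\subset H\mathcal{S}\big(B(\varLambda,\delta)\big)\subset\mathfrak{N}$ by the third paragraph; and when $U$ is moreover open in $M$, the openness argument applies verbatim. The entire content of the lemma therefore rests on the single invocation of Lemma \hyperlink{Lemma 6}{6}, which upgrades mere spatial smallness of the $\varGamma_{n}$ inside $M$ into honest $d_{H}$-convergence to $\varLambda$.
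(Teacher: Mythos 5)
Your proof is correct. The paper gives no in-text proof of this lemma --- it simply points to \cite[p.255, Lemma 6 and its proof]{TE} --- so your argument supplies exactly what the paper outsources, and it does so with the tool the paper itself makes available: Lemma \hyperlink{Lemma 6}{6} (which is the cited \cite[p.255, Lemma 4]{TE}). Your decomposition is the natural one and almost certainly mirrors the referenced proof: the elementary direction (compactness of $\varLambda$ gives $B(\varLambda,\delta)\subset U$, and $d_{H}(\varGamma,\varLambda)<\delta$ gives $\varGamma\subset B(\varLambda,\delta)$) handles both the neighbourhood claim and the openness claim, while the genuinely nontrivial inclusion $H\mathcal{S}(U)\subset\mathfrak{N}$ --- where containment in a small set controls only one side of the Hausdorff distance --- is correctly settled by your contradiction argument, using Lemma \hyperlink{Lemma 6}{6} to upgrade $\varGamma_{n}\subset B(\varLambda,1/n)$ to $\varGamma_{n}\overset{d_{H}}{\longrightarrow}\varLambda$.
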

\selectlanguage{english}%
\begin{proof}
see \cite[p.255, Lemma 6 and its proof]{TE}.
\end{proof}
\selectlanguage{british}%
\hypertarget{Section 4}{}

\section{Global and local structure of $H\mathcal{S}$ and the dynamics around
it}

We are now in possession of all the tools needed to prove the main
results of this paper, both in the global and local settings (Sections
\hyperlink{Section 4.1.1}{4.1.1} and \hyperlink{Section 4.1.2}{4.1.2}).
If the flow is non-wandering, the inherent dynamical constraints make
these characterizations assume a particularly elegant form (Section
\hyperlink{Section 4.2}{4.2}). Section \hyperlink{Section 4.1.3}{4.1.3}
calls attention to a crucial difference between the local topologies
of $H\mathcal{S}\subset\mbox{CMin}$ and $H\mathcal{S}^{*}\subset M$
(this disparity vanishes in dimensions 1 and 2).
\begin{rem}
All results in this section hold for arbitrary continuous flows on
connected manifolds (possibly noncompact or with boundary, see Remark
\hyperlink{Remark 1.c}{1.c}).
\end{rem}
\hypertarget{Section 4.1}{}

\subsection{Arbitrary flows}

\hypertarget{Section 4.1.1}{}

\subsubsection{$H\mathcal{S}$ globally}
\begin{thm}
\hypertarget{Theorem 1}{}(Global Structure of $H\mathcal{S}$) Let
$\theta$ be a $C^{0}$ flow on a generalized Peano continuum $M$.
Endowed with the Hausdorff metric, $H\mathcal{S}$ is the union of
countably many disjoint, clopen, generalized Peano continua $X_{i}$
(each admitting a complete geodesic metric). Moreover, there are disjoint,
connected, open invariant sets $A_{i}\subset M$ such that:
\begin{enumerate}
\item $X_{i}^{*}\subset A_{i}$
\item $A_{i}=\{x\in M:\,\emptyset\neq\omega(x)\in X_{i}\subset H\mathcal{S}\}$
\item for any $x\in A_{i}$, $x\not\in\omega(x)$ implies \emph{$\alpha(x)\subset\mbox{bd}\, A_{i}$
}(possibly $\alpha(x)=\emptyset$).
\end{enumerate}

(see Fig. \hyperlink{Fig 1.4}{1.4})

\end{thm}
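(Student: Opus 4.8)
The plan is to assemble Theorem 1 from the lemmas already established, treating the three numbered claims as consequences of the $\omega$-limit structure on the basins. First I would invoke Corollary 3 directly to obtain that $H\mathcal{S}$, in the Hausdorff metric, decomposes into countably many disjoint clopen components $X_i$, each a generalized Peano continuum; the complete geodesic metric on each $X_i$ then comes from Remark 1.c (Tominaga--Tanaka, following Bing), since a generalized Peano continuum admits such a metric. This disposes of the opening sentence with essentially no new work.

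Next I would define the candidate basins as in (2), setting $A_i := \{x\in M : \emptyset\neq\omega(x)\in X_i\}$, and verify the three properties. That each $A_i$ is invariant is immediate, because $\omega(x^t)=\omega(x)$ for all $t$. For openness and connectedness I would argue locally: given $x\in A_i$, Lemma 1 furnishes a compact, connected, (+)invariant neighbourhood $U$ of $\omega(x)$ in $M$ with $\mbox{CMin}(U)\subset H\mathcal{S}$; by Lemma 3.1 every point of $U$ has its $\omega$-limit inside $H\mathcal{S}(U)$, and by Lemma 3.4 together with Lemma 7 this set $H\mathcal{S}(U)$ is a $d_H$-continuum lying in a single component, namely $X_i$. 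Hence all of $U$ is swept into $A_i$ once we check $\omega$-limits land in the right component, giving an open neighbourhood of $x$ inside $A_i$ and simultaneously exhibiting $X_i^*\subset A_i$, which is claim (1). The $d_H$-convergence statement Lemma 3.2 is what glues these local pieces together: it shows $x\mapsto\omega(x)$ is continuous from $U$ into $H\mathcal{S}$, so the component-label is locally constant, forcing $A_i$ to be open and its image to sit in the single clopen component $X_i$. Connectedness of $A_i$ I would extract by noting that $A_i$ is the continuous preimage (under $x\mapsto\omega(x)$, locally defined via such $U$'s) of the connected set $X_i$, patched over a locally connected $M$; the disjointness of distinct $A_i$ follows from disjointness of the $X_i$ together with the fact that a point has a unique $\omega$-limit.

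For claim (3), suppose $x\in A_i$ with $x\not\in\omega(x)$. Taking the neighbourhood $U$ of $\omega(x)$ as above, Lemma 3.3 gives $\mathcal{O}^-(x)\not\subset U$, and more precisely the well-defined exit time $T=\min\{t\le0 : x^{[t,0]}\subset U\}$ shows the negative orbit leaves $U$ and never returns. Iterating this over a neighbourhood basis of $\omega(x)$ by such $U$'s, the negative orbit is eventually expelled from every such invariant neighbourhood, so no accumulation point of $\mathcal{O}^-(x)$ can have its $\omega$-limit in $X_i$; hence $\alpha(x)\cap A_i=\emptyset$. Since $\alpha(x)$ (when nonvoid) is invariant and cannot meet the open set $A_i$, while $x$ itself is in $A_i$, continuity of the flow places $\alpha(x)$ on $\mbox{bd}\,A_i$ rather than in the exterior; I would make this precise by showing that any point of $\alpha(x)$ is a limit of points $x^{t_n}$ with $t_n\to-\infty$ that lie in $A_i$ just before exiting, forcing $\alpha(x)\subset\overline{A_i}\setminus A_i=\mbox{bd}\,A_i$.

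The main obstacle I anticipate is establishing the \emph{connectedness} and the precise openness of $A_i$ as a subset of $M$, as opposed to the already-secured connectedness of $X_i$ in the $d_H$ metric. The subtlety is that the map $x\mapsto\omega(x)$ is only guaranteed continuous on the invariant compact neighbourhoods $U$ supplied by Lemma 1, so one must verify these local definitions are mutually compatible and cover $A_i$ without gaps, using Lemma 3.2 to transfer connectedness back and forth across the ``bridge'' between the topology of $M$ and that of $\mbox{CMin}$. A second delicate point is claim (3): ruling out the possibility that $\alpha(x)$ re-enters $A_i$ requires the full strength of the exit-time description in Lemma 3.3 applied to a shrinking family of invariant neighbourhoods, rather than a single one, so that the negative orbit is shown to leave \emph{every} invariant neighbourhood of $\omega(x)$ permanently.
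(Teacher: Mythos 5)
Your skeleton matches the paper's (Corollary 3 plus Remark 1.c for the decomposition and the geodesic metrics, then define $A_i$ by (2) and verify invariance, openness, connectedness, disjointness, and (3)), but three of the verifications have genuine gaps. For openness: Lemma 1 gives a compact, (+)invariant neighbourhood $U$ of $\omega(x)$, \emph{not} of $x$; when $x\not\in\omega(x)$ the point $x$ may lie entirely outside $U$, so showing that $\mbox{int}\, U$ is swept into $A_i$ does not yield ``an open neighbourhood of $x$ inside $A_i$'' as you claim. The missing step, which the paper supplies, is continuity of the flow: the forward orbit of $x$ enters $\mbox{int}\, U$, hence so does the forward orbit of every $y$ in some $B\in\mathcal{N}_x$; since $U$ is (+)invariant this gives $\omega(y)\in H\mathcal{S}(U)\subset X_i$, so $B\subset A_i$. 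For connectedness: ``$A_i$ is the continuous preimage of the connected set $X_i$'' is not a valid argument, since preimages of connected sets under continuous maps are in general disconnected, and local connectedness of $M$ does not repair this. The paper argues differently: $X_i^*$ is connected (a nontrivial partition of $X_i^*$ into closed sets would induce one of $X_i$, each minimal set being connected), and each $x\in A_i$ is joined to $X_i^*$ by the connected set $\mathcal{O}(x)\cup\omega(x)\subset A_i$; a union of connected sets all meeting a fixed connected set is connected.

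The most serious gap is in claim (3). Your escape argument runs over a neighbourhood basis of $\omega(x)$ only, so at best it shows that $\alpha(x)$ avoids a neighbourhood of $\omega(x)$. The inference ``hence no accumulation point of $\mathcal{O}^-(x)$ can have its $\omega$-limit in $X_i$'' is a non sequitur: a priori $\alpha(x)$ could contain a point $y\in A_i$ far from $\omega(x)$ whose $\omega$-limit is a \emph{different} minimal set $\varGamma\in X_i$, and expulsion from neighbourhoods of $\omega(x)$ says nothing about that. The paper excludes this possibility with Lemma 2.3: if $y\in\alpha(x)\cap A_i$, then $\omega(y)\subset\alpha(x)$ and $\omega(y)\in X_i\subset H\mathcal{S}$ is stable, so Lemma 2.3 forces $x\in\omega(y)$, whence $x\in\omega(x)$ (as $\omega(y)$ is minimal), contradicting the hypothesis. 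Your escape idea could in fact be salvaged by applying it to invariant neighbourhoods $U$ of $\omega(y)$ rather than of $\omega(x)$ (accumulation of $\mathcal{O}^-(x)$ at $y$ forces the negative orbit into $\mbox{int}\, U$ at arbitrarily negative times, while (+)invariance of $U$ then puts $x\in U$, contradicting the exit-time conclusion of Lemma 3.3), but that is not the argument you wrote.
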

\begin{rem}
\textit{\emph{(a) if $M$ is compact and the flow is minimal, then
the unique $X_{i}$ is $\{M\}$ and $A_{i}=M$;}}

(b) if $H\mathcal{S}=\emptyset$, then the collection $\{X_{i}\}$
is empty;

(c) if $X_{i}$ contains a unique $\varLambda\neq M$, then $\varLambda$
is a proper attractor and $A_{i}$ its basin;

(d) otherwise, $X_{i}$ is a non-degenerate generalized Peano continuum
and thus contains $\mathfrak{c}$ hyper-stable compact minimals ($X_{i}$
contains nontrivial arcs of $H\mathcal{S}$);

(e) although $X_{i}^{*}$ may be noncompact, it roughly acts as an
attracting set in the flow, with basin $A_{i}$ (see Fig. \hyperlink{Fig 1.4}{1.4});

(f) if $A_{i}=M$ then, for every $x\in M$, either $x\in\omega(x)\in H\mathcal{S}$
or $\alpha(x)=\emptyset$ (and in the later case, $M$ is noncompact).\end{rem}
\begin{cor}
$H\mathcal{S}$ is countable iff it is $d_{H}$ discrete iff every
$\varLambda\in H\mathcal{S}$ is an attractor.
\end{cor}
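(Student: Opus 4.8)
The plan is to prove the three stated conditions equivalent by establishing the cycle $(1)\Rightarrow(3)\Rightarrow(2)\Rightarrow(1)$, where I write $(1)$ for countability of $H\mathcal{S}$, $(2)$ for $d_{H}$-discreteness, and $(3)$ for the assertion that every $\varLambda\in H\mathcal{S}$ is an attractor. The backbone of the argument is Corollary 3, which splits $H\mathcal{S}$ into countably many disjoint clopen generalized Peano continua $X_{i}$, together with the singleton-versus-non-degenerate dichotomy recorded in the remark following Theorem 1.

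For $(1)\Rightarrow(3)$ I would observe that countability of $H\mathcal{S}$ forces every component $X_{i}$ to be countable; but a non-degenerate $X_{i}$ contains $\mathfrak{c}$ hyper-stable minimals by item (d) of the remark to Theorem 1, so each $X_{i}$ must reduce to a single point $\{\varLambda\}$. Such a $\varLambda$ is then an attractor by item (c) of that remark (a proper attractor when $\varLambda\neq M$), the exceptional case $\varLambda=M$ being covered by item (a), since a compact $M$ is always an attractor. For $(2)\Rightarrow(1)$ the cleanest route is that a connected discrete space is a single point, so $d_{H}$-discreteness collapses each $X_{i}$ to a point, and there are only countably many $X_{i}$ by Corollary 3; alternatively, $H\mathcal{S}$ is $d_{H}$-separable (as noted in the proof of Corollary 3) and a separable discrete metric space is countable.

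The step that needs genuine dynamical input is $(3)\Rightarrow(2)$, where I must show that a minimal attractor $\varLambda$ is $d_{H}$-isolated. The key point is that $\varLambda$ is the only compact minimal set inside its basin $B^{+}(\varLambda)=\{x:\emptyset\neq\omega(x)\subset\varLambda\}$: for $x\in B^{+}(\varLambda)$ minimality gives $\omega(x)=\varLambda$, so any compact minimal $\varGamma\subset B^{+}(\varLambda)$ contains $\omega(y)=\varLambda$ for $y\in\varGamma$ and hence equals $\varLambda$. Choosing a small open $U$ with $\varLambda\subset U\subset B^{+}(\varLambda)$ and invoking Lemma 7 then makes $H\mathcal{S}(U)\subset\mbox{CMin}(U)=\{\varLambda\}$ a $d_{H}$-neighbourhood of $\varLambda$ in $H\mathcal{S}$, so $\{\varLambda\}$ is $d_{H}$-open and $\varLambda$ is isolated.

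I expect $(3)\Rightarrow(2)$ to be the only real obstacle, since it is where the dynamical attractor property must be translated, through the basin and Lemma 7, into a purely topological isolation statement in $[\mbox{CMin},d_{H}]$; the other two implications are essentially bookkeeping on top of Corollary 3 and the Theorem 1 dichotomy.
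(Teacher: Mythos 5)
Your proof is correct and follows essentially the route the paper intends: the corollary is stated there without proof, as an immediate consequence of Corollary 3 (countably many disjoint clopen components), items (a), (c), (d) of the remark to Theorem 1 (each component is either a singleton attractor or a non-degenerate continuum with $\mathfrak{c}$ elements), and the fact that attractors are isolated from minimals (noted at Definitions 7 and 8). Your basin argument for $(3)\Rightarrow(2)$ is simply a correct, self-contained verification of that last fact, which the paper treats as known, so the two arguments coincide in substance.
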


\begin{proof}
(\emph{of Theorem }\hyperlink{Theorem 1}{1}) Let $\{X_{i}\}$ be
the components of $H\mathcal{S}$. The first conclusion of Theorem
\hyperlink{Theorem 1}{1} is (2) of Corollary \hyperlink{Corollary 3}{3},
together with Remark \hyperlink{Remark 1.c}{1.c}.

Let $A_{i}$ be defined as in (2). Then $\varLambda\in X_{i}\implies\omega(x)=\varLambda$,
for every $x\in\varLambda$, thus $\varLambda^{*}\subset A_{i}$,
hence (1) $X_{i}^{*}\subset A_{i}$. As the $X_{i}$'s are disjoint,
so are the $A_{i}$'s. Since points in the same orbit have the same
$\omega$-limit, each $A_{i}$ is invariant. It is also open: suppose
that $x\in A_{i}$ i.e. $\omega(x)\in X_{i}$. Take a neighbourhood
$U$ of $\omega(x)\in H\mathcal{S}$ in $M$ as in Lemma \hyperlink{Lemma 1}{1},
sufficiently small so that $H\mathcal{S}(U)\subset X_{i}$ (Lemma
\hyperlink{Lemma 7}{7}, using the fact that $X_{i}$ is open in $H\mathcal{S}$).
By continuity of the flow, taking a sufficiently small $B\in\mathcal{N}_{x}$,
$\mathcal{O}^{+}(y)\cap\mbox{int\,}U\neq\emptyset$, for every $y\in B$,
thus implying (Lemma \hyperlink{Lemma 3}{3.1}) $\omega(y)\in H\mathcal{S}(U)\subset X_{i}$
and thus $y\in A_{i}$. Therefore $A_{i}$ is open in $M$. $A_{i}$
is connected since $X_{i}^{*}$ is connected and for every $x\in A_{i}$,
$\mathcal{O}(x)\cup\omega(x)\subset A_{i}$ is connected and $\omega(x)\subset X_{i}^{*}$
($X_{i}^{*}$ is connected: this is trivial noting that any nontrivial
partition of $X_{i}^{*}$ into closed sets entails a nontrivial partition
of $X_{i}$ into closed sets, as each $\varLambda\in X_{i}$ is a
minimal, thus connected).

It remains to prove (3). Suppose $x\in A_{i}$ and $x\not\in\omega(x)$.
As $A_{i}$ is open and invariant, $\alpha(x)\subset\overline{A_{i}}=A_{i}\sqcup\mbox{bd\,}A_{i}$.
Reasoning by contradiction, suppose that $y\in\alpha(x)\cap A_{i}\neq\emptyset$.
Then, $\omega(y)\subset\alpha(x)$ and $\omega(y)\in X_{i}\subset H\mathcal{S}$.
Since $x\not\in\omega(y)$ (otherwise $x\in\omega(x)$, as $\omega(y)$
is minimal), $\omega(y)$ is unstable (Lemma \hyperlink{Lemma 2}{2.3}),
contradiction. Therefore $\alpha(x)\subset\mbox{bd\,}A_{i}$. If $M$
is noncompact it is obviously possible that $\alpha(x)=\emptyset$.
\end{proof}
\hypertarget{Section 4.1.2}{}

\subsubsection{$H\mathcal{S}$ locally}
\begin{thm}
\hypertarget{Theorem 2}{}(Local behaviour) Let $\varLambda$ be a
compact minimal set of a $C^{0}$ flow on a generalized Peano continuum
$M$. Then, either
\begin{enumerate}
\item \emph{$\varLambda\in\mbox{cl}_{H}\mathcal{U}$}, or
\item $\varLambda$ is an attractor, or
\item there are arbitrarily small compact, connected, (+)invariant neighbourhoods
$U$ of $\varLambda$ in $M$ such that:

\begin{enumerate}
\item the (compact) minimal sets contained in $U$ are all Lyapunov hyper-stable
and their set $H\mathcal{S}(U)$ is a non-degenerate Peano continuum;
\item for each $x\in U$, $\omega(x)\in H\mathcal{S}(U)$ and if $x\not\in\omega(x)$,
then its negative orbit leaves $U$ (and thus never returns again).
\end{enumerate}
\end{enumerate}

(see Fig. \hyperlink{Fig 1.3}{1.3})

\end{thm}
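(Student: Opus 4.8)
The statement is a trichotomy, so the plan is to assume that neither (1) nor (2) holds and deduce (3). The negation of (1) is exactly $\varLambda\in H\mathcal{S}$ (since $H\mathcal{S}=(\mbox{cl}_{H}\mathcal{U})^{c}$ by Definition 4), and I additionally assume $\varLambda$ is not an attractor. Under these hypotheses Lemma 1 already supplies arbitrarily small compact, connected, (+)invariant neighbourhoods $U$ of $\varLambda$ in $M$ with $\mbox{CMin}(U)\subset H\mathcal{S}$, and for every such $U$ Lemma 3 gives most of (3) for free: by 3.1 each $x\in U$ has $\emptyset\neq\omega(x)\in H\mathcal{S}(U)$, by 3.3 a point $x\in U$ with $x\notin\omega(x)$ has its negative semiorbit leaving $U$ forever, and by 3.4 the set $H\mathcal{S}(U)=\mbox{CMin}(U)$ is a $d_{H}$-continuum. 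Thus (b) and the first half of (a) (all minimals in $U$ hyper-stable, their set a continuum) come directly from Lemmas 1 and 3. Two things remain: to upgrade ``continuum'' to \emph{Peano} continuum, and to rule out degeneracy.

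The heart of the argument is the Peano property, which Lemma 3.4 does \emph{not} deliver: a subcontinuum, even a continuum-neighbourhood, of the locally-Peano space $H\mathcal{S}$ need not be locally connected. The plan is to realise $H\mathcal{S}(U)$ as a continuous image of a genuine Peano continuum and invoke Hahn-Mazurkiewicz (Remark 1.b). Since $M$ is locally a Peano continuum (Lemma 5), I first build a Peano continuum \emph{neighbourhood} $P$ of $\varLambda$ in $M$: cover the compact connected set $\varLambda$ by finitely many small Peano-continuum neighbourhoods $P_{y_{j}}$ of points $y_{j}\in\varLambda$ and set $P=\bigcup_{j}P_{y_{j}}$; this $P$ is compact, a neighbourhood of $\varLambda$, locally connected (the finite-union argument of the Claim in Lemma 5's proof) and connected (each $P_{y_{j}}$ meets the connected $\varLambda\subset P$). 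I place $P$ inside a compact $W\in\mathcal{N}_{\varLambda}$ meeting no $\varGamma\in\mbox{cl}_{H}\mathcal{U}$ and, using stability of $\varLambda$ exactly as in Lemma 1, inside a $V$ with $\overline{\mathcal{O}^{+}(V)}\subset W$. Then I take $U:=\overline{\mathcal{O}^{+}(P)}$; it is compact (being $\subset W$), connected, (+)invariant, a neighbourhood of $\varLambda$ contained in the prescribed small set, and $\mbox{CMin}(U)\subset\mbox{CMin}(W)\subset H\mathcal{S}$, so Lemma 3 applies to it.

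The key computation is then the identity $\omega(U)=\omega(P)$. The inclusion $\supseteq$ is trivial. For $\subseteq$, any $x\in U=\overline{\mathcal{O}^{+}(P)}$ is a limit $p_{n}^{\,t_{n}}\to x$ with $p_{n}\in P$, $t_{n}\geq0$; since $\omega(p_{n}^{\,t_{n}})=\omega(p_{n})$ and $\omega$ is $d_{H}$-continuous on $U$ (Lemma 3.2), $\omega(x)=\lim_{n}\omega(p_{n})\in\omega(P)$, using that $\omega(P)$ is $d_{H}$-compact. Because $H\mathcal{S}(U)=\omega(U)$ (Lemma 3.1, together with $\omega(x)=\varGamma$ for $x\in\varGamma\in\mbox{CMin}(U)$), this yields $H\mathcal{S}(U)=\omega(P)$, the image of the Peano continuum $P$ under the continuous map $\omega$, hence again a Peano continuum (a continuous image of $\mathbb{D}^{1}$, Remark 1.b). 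Finally, non-degeneracy uses the standing assumption: were $H\mathcal{S}(U)=\{\varLambda\}$, every $x\in U$ would satisfy $\omega(x)=\varLambda$, whence $\mbox{int}\,U\subset B^{+}(\varLambda)$ and $\varLambda$ would be an attractor, contrary to hypothesis; so $|H\mathcal{S}(U)|\geq2$ and, being connected with more than one point, $H\mathcal{S}(U)$ is a non-degenerate Peano continuum. Assembling these facts gives (a) and (b).

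I expect the main obstacle to be precisely the local connectedness of $H\mathcal{S}(U)$: proving it \emph{is} the delicate point, and the device that resolves it is the identity $\omega\big(\overline{\mathcal{O}^{+}(P)}\big)=\omega(P)$, which transports the local Peano structure of the phase space $M$ (Lemma 5) across the continuous $\omega$-map onto $H\mathcal{S}(U)$ --- a concrete instance of the ``crossing the bridge'' duality between the two topologies highlighted in the introduction. The secondary care points are arranging $U$ to be simultaneously compact, connected, (+)invariant, and a neighbourhood of $\varLambda$ with $\mbox{CMin}(U)\subset H\mathcal{S}$ (handled by the nested choice $P\subset V\subset W$), and the routine verification that a finite union of Peano continua sharing the connected core $\varLambda$ is again a Peano continuum.
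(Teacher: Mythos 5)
Your proposal is correct, and it reaches conclusion (3) by a genuinely different mechanism than the paper. The paper never pushes a Peano continuum of the phase space through the $\omega$-map: instead it first transfers the local structure of $M$ abstractly to $H\mathcal{S}$ (Lemma 3.4 $\to$ Lemma 4 $\to$ Corollary 3, i.e.\ Lemma 5 applied to the space $[H\mathcal{S},d_{H}]$ itself), picks a Peano continuum neighbourhood $\mathfrak{N}$ of $\varLambda$ \emph{inside} $H\mathcal{S}$, and then engineers the neighbourhood in $M$ as $U:=V\cup\mathfrak{N}^{*}$ (with $V$ from Lemmas 1 and 7), checking that $\mbox{CMin}(U)=\mathfrak{N}$ exactly; the attractor alternative (2) is settled by case (B.1) of the component dichotomy, using the basin $A_{i}$ from Theorem 1. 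You go in the opposite direction: Lemma 5 applied to $M$ gives a Peano continuum neighbourhood $P$ of $\varLambda$, you set $U:=\overline{\mathcal{O}^{+}(P)}$, and the identity $H\mathcal{S}(U)=\omega(U)=\omega(P)$ (via continuity of $x\mapsto\omega(x)$ from Lemma 3.2 and $d_{H}$-compactness of $\omega(P)$) exhibits $H\mathcal{S}(U)$ as a continuous Hausdorff image of a Peano continuum, hence Peano by Hahn--Mazurkiewicz; you then dispose of the attractor case directly from $\mbox{int}\,U\subset B^{+}(\varLambda)$, with no appeal to Theorem 1. Each route has its merits: yours is more self-contained for Theorem 2 taken in isolation (it bypasses Lemmas 4 and 7, Corollary 3 and Theorem 1 entirely, and the push-forward $\omega\big(\overline{\mathcal{O}^{+}(P)}\big)=\omega(P)$ is an elegant, concrete instance of the ``crossing the bridge'' duality), while the paper's route front-loads the work into Corollary 3, which is reusable infrastructure already needed for Theorem 1 and for the instability criteria of Section 3.1, so that Theorem 2 then costs almost nothing and comes with the extra precision that $\mbox{CMin}(U)$ equals a prescribed $d_{H}$-neighbourhood $\mathfrak{N}$. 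Two small points you should make explicit if you write this up: the continuity of $\omega$ on $U$ is only \emph{sequential} continuity as stated in Lemma 3.2 (which suffices since $[\mbox{Ci}(U),d_{H}]$ is metric), and mutual exclusivity of (1)--(3), which the paper asserts, deserves the one-line remark that an attractor's basin contains no minimal set other than the attractor itself.
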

\begin{rem}
$\bullet$~(1) holds iff $\varLambda\not\in H\mathcal{S}$ i.e. if
there is a sequence $\mathcal{U}\ni\varLambda_{n}\overset{d_{H}}{\longrightarrow}\varLambda$
(Section \hyperlink{Section 2.3}{2.3}). If $\varLambda$ is unstable,
then this trivially holds since $\varLambda_{n}:=\varLambda$ is such
a sequence; 

$\bullet$~if $\varLambda=M$ (i.e. $M$ is compact and the flow
is minimal), then (2) trivially holds;

$\bullet$~(3.a) implies that, endowed with the Hausdorff metric,
$H\mathcal{S}(U)$\emph{ is compact, nontrivially arcwise connected
($\implies|H\mathcal{S}(U)|=\mathfrak{c}),$ locally arcwise connected
and admits a complete geodesic metric }(see Remark \hyperlink{Remark 1.c}{1.c});

$\bullet$~the final conclusion in (3.b) can be written as: 
\[
\exists\, T\leq0:\, x^{(-\infty,T)}\subset U^{c}\mbox{ and }\; x^{[T,+\infty)}\subset U
\]
(see the proof of Lemma \hyperlink{Lemma 3}{3}).\end{rem}
\begin{defn}
\hypertarget{Definition 8}{}$\varLambda\in\mbox{CMin}$ is \emph{isolated
(from minimals) }if there is an $U\in\mathcal{N}_{\varLambda}$ containing
no compact minimal set other than $\varLambda$. This is equivalent
to $\varLambda$ being $d_{H}$ isolated in $\mbox{CMin}$ (by the
$d_{H}$ metric definition and Lemma \hyperlink{Lemma 6}{6}).\end{defn}
\begin{cor}
\hypertarget{Corollary 6}{}If $\varLambda\in\mathcal{S}$ is isolated,
then it is an attractor.\end{cor}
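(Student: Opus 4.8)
The plan is to show that an isolated stable compact minimal set is automatically hyper-stable, and then to let Lemmas \hyperlink{Lemma 1}{1} and \hyperlink{Lemma 3}{3} do the work of attracting a whole neighbourhood of $\varLambda$ to it.

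First I would verify that $\varLambda\in H\mathcal{S}$. By Definition \hyperlink{Definition 8}{8}, being isolated is equivalent to being $d_{H}$ isolated in $\mbox{CMin}$, so $\{\varLambda\}$ is a $d_{H}$ neighbourhood of $\varLambda$ in $\mbox{CMin}$; in particular no sequence of compact minimal sets distinct from $\varLambda$ can $d_{H}$ converge to $\varLambda$. Since moreover $\varLambda\in\mathcal{S}$ (so $\varLambda\notin\mathcal{U}$), there is no sequence $\mathcal{U}\ni\varGamma_{n}\overset{d_{H}}{\longrightarrow}\varLambda$ at all. Hence $\varLambda\notin\mbox{cl}_{H}\mathcal{U}=H\mathcal{S}^{c}$ (Section \hyperlink{Section 2.3}{2.3}), that is $\varLambda\in H\mathcal{S}$.

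Next, having $\varLambda\in H\mathcal{S}$, I would apply Lemma \hyperlink{Lemma 1}{1} to obtain an arbitrarily small compact, connected, (+)invariant neighbourhood $U$ of $\varLambda$ in $M$ with $\mbox{CMin}(U)\subset H\mathcal{S}$. Choosing $U$ small enough to lie inside an isolating neighbourhood of $\varLambda$ (which exists by Definition \hyperlink{Definition 8}{8}) forces $\mbox{CMin}(U)=\{\varLambda\}$, and since $\mbox{CMin}(U)\subset H\mathcal{S}$ this gives $H\mathcal{S}(U)=\{\varLambda\}$. Lemma \hyperlink{Lemma 3}{3.1} then applies to this $U$: every $x\in U$ satisfies $\emptyset\neq\omega(x)\in H\mathcal{S}(U)=\{\varLambda\}$, so $\omega(x)=\varLambda$. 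Thus $U\subset B^{+}(\varLambda)=\{x\in M:\emptyset\neq\omega(x)\subset\varLambda\}$, whence $B^{+}(\varLambda)\in\mathcal{N}_{\varLambda}$. Together with the stability assumed on $\varLambda$, this is exactly the definition of an attractor (Definition \hyperlink{Definition 7}{7}), which completes the proof.

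The only conceptual step is the first one: recognizing that isolatedness together with stability upgrades to hyper-stability. Once $\varLambda\in H\mathcal{S}$ is established, Lemmas \hyperlink{Lemma 1}{1} and \hyperlink{Lemma 3}{3} leave no real obstacle. I therefore expect the care to concentrate on that first step, in particular on checking that \emph{$d_{H}$ isolated in} $\mbox{CMin}$ genuinely rules out any convergent sequence of Lyapunov unstable minimals, so that $\varLambda$ indeed falls outside $\mbox{cl}_{H}\mathcal{U}$.
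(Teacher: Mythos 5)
Your proof is correct, but it follows a genuinely different route from the paper's. The paper derives the corollary in one line from Theorem \hyperlink{Theorem 2}{2} (the local trichotomy): isolatedness rules out cases (1) and (3), since each of those forces compact minimals $\varGamma\neq\varLambda$ inside every neighbourhood of $\varLambda$, so case (2) --- $\varLambda$ is an attractor --- must hold. You instead bypass Theorems \hyperlink{Theorem 1}{1} and \hyperlink{Theorem 2}{2} entirely and argue directly from the basic machinery: first the upgrade from stable-and-isolated to hyper-stable (which is the same content as excluding case (1), and your check that $d_{H}$ isolatedness plus $\varLambda\notin\mathcal{U}$ kills any sequence $\mathcal{U}\ni\varGamma_{n}\overset{d_{H}}{\longrightarrow}\varLambda$ is sound); then Lemma \hyperlink{Lemma 1}{1} to produce a compact, connected, (+)invariant neighbourhood $U$ inside an isolating neighbourhood, so that $\mbox{CMin}(U)=H\mathcal{S}(U)=\{\varLambda\}$; then Lemma \hyperlink{Lemma 3}{3.1} to get $\omega(x)=\varLambda$ for every $x\in U$, i.e. $U\subset B^{+}(\varLambda)$, which together with stability is exactly Definition \hyperlink{Definition 7}{7}. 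What your approach buys: it is self-contained modulo Lemmas \hyperlink{Lemma 1}{1} and \hyperlink{Lemma 3}{3} (so the corollary could be stated immediately after Lemma \hyperlink{Lemma 3}{3}, before the main theorems), and it exhibits an explicit positively invariant neighbourhood contained in the attraction basin. What the paper's buys: brevity --- the trichotomy of Theorem \hyperlink{Theorem 2}{2} already encapsulates precisely the work you redo, since its case (B.1) is proved by the same mechanism via Theorem \hyperlink{Theorem 1}{1}. There is no gap in your argument.
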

\begin{proof}
If $\varLambda\in\mathcal{S}$ is isolated, then neither (1) nor (3)
of Theorem \hyperlink{Theorem 2}{2} can hold, since both imply the
existence of compact minimals $\varGamma\neq\varLambda$ contained
in every neighbourhood of $\varLambda$.
\end{proof}

\begin{proof}
(\emph{of Theorem }\hyperlink{Theorem 2}{2}) The three conditions
are mutually exclusive.

(A) if $\varLambda\not\in H\mathcal{S}$, then (1) holds (Section
\hyperlink{Section 2.3}{2.3}).

(B) suppose $\varLambda\in H\mathcal{S}$. Let $X_{i}$ be the component
of $H\mathcal{S}$ to which $\varLambda$ belongs (see the proof of
Theorem \hyperlink{Theorem 2}{2}). We distinguish two cases:

(B.1) if $X_{i}=\{\varLambda\}$, then $\varLambda$ is stable and
$A_{i}\in\mathcal{N}_{\varLambda}$ (Theorem \hyperlink{Theorem 1}{1})
is its region of attraction, thus $\varLambda$ is an attractor and
$A_{i}$ its basin. Hence (2) holds.

(B.2) otherwise, let $\mathfrak{N}$ be a Peano continuum neighbourhood
of $\varLambda$ in $H\mathcal{S}$, contained in $X_{i}$ ($X_{i}$
is open in $H\mathcal{S}$ (Theorem \hyperlink{Theorem 1}{1}) and
$H\mathcal{S}$ is locally a Peano continuum (Corollary \hyperlink{Corollary 3}{3})).
As $X_{i}$ is non-degenerate (i.e. $|X_{i}|>1$), so is $\mathfrak{N}$.
Take a sufficiently small compact, connected, (+)invariant neighbourhood
$V$ of $\varLambda$ in $M$ such that $\mbox{CMin}(V)\subset\mathfrak{N}$
(Lemmas \hyperlink{Lemma 1}{1}\foreignlanguage{english}{ and }\hyperlink{Lemma 7}{7}).
Observe that $\mathfrak{N}^{*}\subset M$ is (a) compact: $\mathfrak{N}$
is $d_{H}$ compact, hence given sequences $x_{n}\in\varLambda_{n}\in\mathfrak{N}$,
there is a convergent subsequence $\varLambda_{n'}\overset{d_{H}}{\longrightarrow}\varLambda\in\mathfrak{N}$.
We may assume that all $\varLambda_{n'}$'s are contained in some
compact neighbourhood $W$ of $\varLambda$ in $M$ ($\varLambda$
is compact and $M$ is locally compact). The conclusion follows since
$x_{n'}$ has a convergent subsequence $x_{n''}\rightarrow x\in W$
and necessarily $x\in\varLambda$, as $x_{n''}\in\varLambda_{n''}\overset{d_{H}}{\longrightarrow}\varLambda$;
(b) connected (as $X_{i}^{*}$ in the proof of Theorem \hyperlink{Theorem 1}{1})
and (c) invariant (union of minimal sets). Then $U:=V\cup\mathfrak{N}^{*}$
is a compact, connected, (+)invariant neighbourhood of $\varLambda$
in $M$ and $\mbox{CMin}(U)=\mathfrak{N}\subset H\mathcal{S}$. Thus
(3.a) is proved; (3.b) follows from Lemma \hyperlink{Lemma 3}{3.1}
and \hyperlink{Lemma 3}{3.3}, since $\mbox{CMin}(U)\subset H\mathcal{S}$.
\end{proof}
\hypertarget{Section 4.1.3}{}

\subsubsection{Topology of $H\mathcal{S}^{*}\subset M$}

It is easily seen that, as $H\mathcal{S}$, $H\mathcal{S}^{*}\subset M$
is locally compact and that $\{X_{i}^{*}\}$ are its (clopen) components
($\{X_{i}\}$ being the components of $H\mathcal{S}$). But while
$H\mathcal{S}$ is locally connected (in relation to the $d_{H}$
metric), the corresponding set of points $H\mathcal{S}^{*}$ needs
not to be a locally connected subset of $M$. The local topology of
the minimal sets $\varLambda\in H\mathcal{S}$ plays a determinant
role here. Actually, $H\mathcal{S}^{*}$ may be nowhere locally connected,
even if the flow is smooth. Handel's by product example \cite[p.166]{HA}
can be transferred to $\mathbb{S}^{2}$, to yield an orientation preserving
$C^{\infty}$ diffeomorphism $f:\mathbb{S}^{2}\circlearrowleft$ with
only three minimal sets, two repelling fixed points (the north and
south poles $\pm p$) and an attracting pseudo-circle $P$, with basin
$\mathbb{S}^{2}\setminus\{\pm p\}$. The pseudo-circle is nowhere
locally connected. Taking the suspension of $f$, we get a $C^{\infty}$
flow $v^{t}$ on $\mathbb{S}_{f}^{2}\simeq\mathbb{S}^{2}\times\mathbb{S}^{1}$,%
\footnote{We may actually identify these two manifolds and suppose $v^{t}$
defined on $\mathbb{S}^{2}\times\mathbb{S}^{1}$, as homeomorphic
3-manifolds are $C^{\infty}$ diffeomorphic.%
} with a nowhere locally connected attracting minimal set $\varLambda=P_{f}$
(locally, $\varLambda$ is homeomorphic to the Cartesian product of
$\mathbb{D}^{1}$ and some open subset of $P$, hence it is nowhere
locally connected). In this flow, $H\mathcal{S}=\{\varLambda\}\simeq\{0\}$
and $H\mathcal{S}^{*}=\varLambda$, therefore $H\mathcal{S}^{*}$
is nowhere locally connected. Examples of $C^{\infty}$ flows in higher
dimensions, with $H\mathcal{S}^{*}$ nowhere locally connected, are
generated by the vector fields
\[
(x,y)\longmapsto\big(v(x),0\big)\mbox{ \,\,\,\ on\,\,\,\ }(\mathbb{S}^{2}\times\mathbb{S}^{1})\times\mathbb{S}^{k},\mbox{ \,\,\,}k\geq1
\]
where $v$ is the original vector field on $\mathbb{S}^{2}\times\mathbb{S}^{1}$.
Then,
\[
H\mathcal{S}=\big\{ P_{f}\times\{y\}:\, y\in\mathbb{S}^{k}\big\}\simeq\mathbb{S}^{k}\mbox{ \,\,\,\ and \,\,\,\ensuremath{H\mathcal{S}^{*}=P_{f}\times\mathbb{S}^{k}}}
\]
is locally homeomorphic to the Cartesian product of $\mathbb{D}^{k+1}$
and an open subset of $P$, hence nowhere locally connected.

However, for $C^{0}$ flows on arbitrary 2-manifolds (possibly nonorientable,
noncompact, with boundary), $H\mathcal{S}^{*}$ is \emph{always }a
locally connected subset of $M$. Actually, each (clopen) component
$X_{i}^{*}$ of $H\mathcal{S}^{*}$ either
\begin{enumerate}
\item contains more than 2 equilibria, in which case it consists entirely
of equilibria, hence $X_{i}^{*}\simeq X_{i}$, implying that $X_{i}^{*}$
is locally connected (see Theorem \hyperlink{Theorem 1}{1}), or
\item contains no more than 2 equilibria, in which case $X_{i}^{*}$ is
a (connected) $k$-manifold ($0\leq k\leq2)$.
\end{enumerate}
The key fact to establish (2) is the following result of Athanassopoulos
and Strantzalos \cite{AT}:
\begin{thm*}
A Lyapunov stable compact minimal set of a $C^{0}$ flow on an arbitrary
2-manifold is either an equilibrium orbit, or a periodic orbit, or
a torus.
\end{thm*}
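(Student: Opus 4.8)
The plan is to split the argument according to the topological dimension of $K$, which must be $0$, $1$ or $2$. The cases $\dim K=0$ and $\dim K=2$ can be settled directly, \emph{without} using stability, while the whole force of Lyapunov stability will be spent on the case $\dim K=1$, where it is precisely what excludes exotic, Denjoy-type minimal sets. So the real content is the one-dimensional case.

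First the two easy cases. If $\dim K=0$ then $K$ is totally disconnected, so its only connected subsets are singletons; since each orbit $\mathcal{O}(x)$ is a continuous image of $\mathbb{R}$, hence connected, every orbit in $K$ is a single point, and by minimality $K$ is one equilibrium. If $\dim K=2$ then $K$ has an interior point in $M$ (a subset of a $2$-manifold of dimension $2$ has nonvoid interior), and $\mathrm{bd}\,K=K\setminus\mathrm{int}_M K$ is a closed invariant proper subset of $K$, so minimality forces $\mathrm{bd}\,K=\emptyset$; thus $K$ is clopen and, as $M$ is connected, $K=M$. Then $M$ is a closed surface (its boundary is invariant, hence empty, and $K$ compact gives $M$ compact) carrying a minimal, in particular fixed-point-free, flow, so $\chi(M)=0$ and $M$ is a torus or a Klein bottle; invoking the classical fact that the Klein bottle admits no minimal flow, $M$ is a torus.

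The remaining case $\dim K=1$ is the heart of the matter. Here $K$ has no equilibrium (an equilibrium would be a proper closed invariant subset), so the flow is regular on $K$, and I would fix a local cross-section (flow box) $\Sigma$ through a point $p\in K$, which exists for $C^{0}$ flows at regular points. The compact set $C:=K\cap\Sigma$ cannot contain a nondegenerate subinterval (its saturation would be $2$-dimensional), so $C$ has empty interior in $\Sigma$ and is totally disconnected; the first-return map $f$ is a homeomorphism defined near $C$ with $f(C)=C$, and $(C,f)$ is minimal. If $C$ is finite, then $K$ is a single periodic orbit and we are done. The task is therefore to rule out, using stability, the alternative in which $C$ is infinite and hence (being perfect and totally disconnected) a Cantor set.

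The hard part, and the only place Lyapunov stability enters, is to derive a contradiction from $C$ being a Cantor set. The complement $\Sigma\setminus C$ consists of countably many open gaps with endpoints in $C$, and $f$ permutes them. Along any $f$-orbit of gaps $\{f^{n}(G_0)\}_{n\in\mathbb{Z}}$ the widths are summable (their sum is at most the length of $\Sigma$), so they tend to $0$ as $n\to\pm\infty$ and attain a positive maximum $w$ at some gap $G_0$. Given $\delta>0$, choose $m$ so large that $G':=f^{-m}(G_0)$ has width less than $\delta$, and pick $x\in G'$ with $f^{m}(x)$ in the middle third of $G_0$ (possible since $f^{m}\colon G'\to G_0$ is a monotone homeomorphism). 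Then $x$ lies within $\delta$ of $K$, yet its forward orbit meets $G_0$ at a point whose distance from $C$ is at least $w/3$; inside the flow box, where $K$ looks like $C$ times the flow direction, this point is at distance at least $w/3$ from all of $K$. Hence points arbitrarily close to $K$ have forward orbits leaving the fixed neighbourhood of $K$ of radius $w/3$, contradicting Lyapunov stability. The delicate points I expect to fight with are the purely $C^{0}$ construction and the continuity and domain of definition of the first-return map, and the flow-box estimate guaranteeing that a point in the middle of a wide gap is genuinely far from all of $K$, and not merely from $C=K\cap\Sigma$.
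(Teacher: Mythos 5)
First, a point of reference: the paper does not prove this statement at all — it quotes it as a known result of Athanassopoulos and Strantzalos \cite{AT} — so your proposal can only be measured against the literature and its own internal logic. Your dimension trichotomy is sensible, and the two cases you call easy are indeed essentially correct without stability: in dimension $0$, connectedness of orbits plus minimality gives a single equilibrium; in dimension $2$, nonempty interior plus invariance of the frontier forces $K=M$, then absence of equilibria gives $\chi(M)=0$ and Kneser's theorem excludes the Klein bottle. The reduction in dimension $1$ to a Cantor cross-section $C=K\cap\Sigma$ with minimal return dynamics is also fine.

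The genuine gap is the sentence ``$f$ permutes the gaps \dots $f^{m}\colon G'\to G_{0}$ is a monotone homeomorphism.'' For a $C^{0}$ flow the first-return map to $\Sigma$ is defined, continuous and injective only on a \emph{neighbourhood of $C$ in $\Sigma$}; nothing forces the orbit of a point in the middle of a wide gap to return to $\Sigma$ at all, since the dynamics in the complement of $K$ is completely unconstrained (one can, for instance, modify a Denjoy suspension inside the open strips swept by a gap so that mid-gap orbits limit onto equilibria and never return, without altering $K$). With that, the entire gap-orbit bookkeeping — disjointness, summability of widths, existence of a widest gap $G_{0}$, and above all the choice of $x\in G'$ with $f^{m}(x)$ in the middle third of $G_{0}$ — collapses. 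This is not the routine ``domain of definition'' technicality you flagged; it is the point where stability itself must enter, and it does so as follows. (i) Although $f$ need not map gaps to gaps, it does induce a well-defined injective map on \emph{germs} of gaps at their endpoints in $C$, and since each gap carries only two germs, the widths of the gaps appearing in the backward germ-orbit of $(G_{0},a)$ ($a$ an endpoint of a widest interior gap $G_{0}$, of width $w$) are still summable, hence tend to $0$. (ii) Choose $\eta$ so small that orbits staying $\eta$-close to $K$ must keep recrossing $\Sigma$ at points within $\eta'<w/2$ of $C$, let $\delta$ be the corresponding stability constant, and pick an interior gap $G'=(a',b')$ in that germ-orbit, say $f^{n}$ carries the germ at $a'=f^{-n}(a)$ to the germ of $G_{0}$ at $a$, with $\mathrm{width}(G')<\delta$: then \emph{every} point of $\overline{G'}$ is $\delta$-close to $C\subset K$, so by stability all its forward returns exist and land within $\eta'$ of $C$. (iii) Now $f^{n}(\overline{G'})$ is an arc with endpoints $a$ and $f^{n}(b')\in C$, whose interior is a connected subset of $\Sigma\setminus C$ meeting $G_{0}$ (germ-tracking), hence contained in $G_{0}$; injectivity then forces $f^{n}(b')=b$, so the image is all of $\overline{G_{0}}$ and in particular contains the midpoint of $G_{0}$, at distance $w/2>\eta'$ from $C$ — contradicting (ii). So your strategy (instability of Denjoy-type minimal sets via shrinking gaps) is the right one and can be completed, but the missing ideas — germs instead of gaps, stability as the very tool that defines the iterated return map on whole narrow gaps, and the endpoint-matching argument that a narrow gap maps \emph{onto} the wide one — are the real content of the one-dimensional case, not details. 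Your last worry (that mid-gap points of $\Sigma$ are far from all of $K$, not just from $C$) is, by contrast, a routine compactness/flow-box argument.
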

The reader is invited to look up for the 13 possible manifolds (up
to homeomorphism) that might occur as $X_{i}^{*}$ in case (2).

\hypertarget{Section 4.2}{}

\subsection{Non-wandering flows}

Note that, in virtue of Poincaré Recurrence Theorem, all results below
are valid, not only, for conservative flows on finite volume manifolds,
but also for any flows topologically equivalent to these ones (being
non-wandering is a topological equivalence invariant). This includes
conjugations via homeomorphisms and time reparametrizations. 
\begin{rem}
The results in this section are, in general, false if the flow is
\emph{not} non-wandering (even when $M$ is compact), the north-south
flow on $\mathbb{S}^{n}$ being a trivial counter-example to all of
them. \end{rem}
\begin{thm}
\hypertarget{Theorem 3}{}(Global structure of $H\mathcal{S}$) Given
a $C^{0}$ non-wandering flow on a generalized Peano continuum $M$,
either
\begin{enumerate}
\item $H\mathcal{S}=\emptyset$ i.e. the Lyapunov unstable compact minimal
sets are $d_{H}$ dense in \emph{$\mathcal{\mbox{CMin}},$} or
\item $H\mathcal{S}=\{M\}$ i.e. $M$ is compact and the flow is minimal,
or
\item $H\mathcal{S}$ is the union of $1\leq\beta\leq\aleph_{0}=|\mathbb{N}|$
disjoint, clopen, non-degenerate generalized Peano continua $X_{i\in\beta}$,
each $X_{i}^{*}\subset M$ being a (nonvoid) connected, open invariant
set.
\end{enumerate}
\end{thm}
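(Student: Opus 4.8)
The plan is to obtain Theorem \hyperlink{Theorem 3}{3} as a refinement of the general Global Structure Theorem \hyperlink{Theorem 1}{1}, feeding in the two extra dynamical facts available in the non-wandering setting: that a proper stable compact minimal set is surrounded by infinitely many other compact minimals (Section \hyperlink{Section 2.2}{2.2}), and that $\omega(x)$ stable forces $x\in\omega(x)$ (Lemma \hyperlink{Lemma 2}{2.4}). First I would dispose of the trichotomy's first alternative: if $H\mathcal{S}=\emptyset$, then by Definition \hyperlink{Definition 4}{4} we have $\mbox{cl}_{H}\mathcal{U}=H\mathcal{S}^{c}=\mbox{CMin}$, which is exactly the assertion that $\mathcal{U}$ is $d_{H}$-dense in $\mbox{CMin}$, i.e. case (1). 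So I may assume $H\mathcal{S}\neq\emptyset$ and invoke Theorem \hyperlink{Theorem 1}{1} to write $H\mathcal{S}=\bigsqcup_{i}X_{i}$ as a disjoint union of clopen generalized Peano continua; the bound $\beta\leq\aleph_{0}$ is immediate from Corollary \hyperlink{Corollary 3}{3}(2), and $\beta\geq1$ from $H\mathcal{S}\neq\emptyset$.

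Next I would separate cases (2) and (3) according to whether $M\in\mbox{CMin}$. If $M$ is itself a compact minimal set then $M$ is compact, the flow is minimal and $\mbox{CMin}=\{M\}$; since then $\mathcal{U}=\emptyset$, we get $H\mathcal{S}=\{M\}$, which is case (2). Otherwise every $\varLambda\in\mbox{CMin}$ is a proper subset of $M$, and I claim each component $X_{i}$ is non-degenerate. Indeed, suppose $X_{i}=\{\varLambda\}$. Being clopen, $\{\varLambda\}$ is $d_{H}$-open in $H\mathcal{S}$, so $\varLambda$ is $d_{H}$-isolated among hyper-stable minimals. But $\varLambda\in H\mathcal{S}\subset\mathcal{S}$ is a proper stable compact minimal, hence in the non-wandering flow it is bi-stable and every $U\in\mathcal{N}_{\varLambda}$ has $U\setminus\varLambda$ containing infinitely many compact minimals (Section \hyperlink{Section 2.2}{2.2}). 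Choosing $U$ as in Lemma \hyperlink{Lemma 1}{1} so that $\mbox{CMin}(U)\subset H\mathcal{S}$, these minimals lie in $H\mathcal{S}$ and, by Lemma \hyperlink{Lemma 6}{6}, converge to $\varLambda$ in $d_{H}$, contradicting the $d_{H}$-isolation of $\varLambda$. Hence every $X_{i}$ is non-degenerate, as required by case (3). Equivalently, one may note that a degenerate component would force $\varLambda$ to be a proper attractor by Theorem \hyperlink{Theorem 1}{1}, which is excluded in the non-wandering setting (Definition \hyperlink{Definition 7}{7}).

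It remains to identify $X_{i}^{*}$ with the basin $A_{i}$ of Theorem \hyperlink{Theorem 1}{1}, which is where the non-wandering hypothesis does its decisive work. By Theorem \hyperlink{Theorem 1}{1} we already have $X_{i}^{*}\subset A_{i}$ with $A_{i}=\{x:\,\emptyset\neq\omega(x)\in X_{i}\}$ connected, open and invariant. For the reverse inclusion, take $x\in A_{i}$; then $\omega(x)\in X_{i}\subset H\mathcal{S}$ is stable, so Lemma \hyperlink{Lemma 2}{2.4} gives $x\in\omega(x)\subset X_{i}^{*}$. Thus $A_{i}=X_{i}^{*}$, and $X_{i}^{*}$ inherits from $A_{i}$ the properties of being a nonvoid, connected, open invariant subset of $M$, completing case (3). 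The main obstacle is precisely this last identity: in the general flow of Theorem \hyperlink{Theorem 1}{1} the basin $A_{i}$ can be strictly larger than $X_{i}^{*}$ (it may contain genuinely wandering, non-recurrent points), and it is only the recurrence forced by Lemma \hyperlink{Lemma 2}{2.4} --- ultimately Poincar\'e recurrence --- that collapses the basin onto the union of its minimal sets. Once this is in place, the remaining assertions are bookkeeping on top of Theorem \hyperlink{Theorem 1}{1} and Corollary \hyperlink{Corollary 3}{3}.
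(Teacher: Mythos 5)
Your proposal is correct and takes essentially the same route as the paper: reduce to Theorem \hyperlink{Theorem 1}{1}, rule out degenerate components via the non-wandering facts of Section \hyperlink{Section 2.2}{2.2} (the paper phrases this as the impossibility of proper attractors, which you note as your equivalent alternative), and obtain $X_{i}^{*}=A_{i}$ from Lemma \hyperlink{Lemma 2}{2.4}. Your direct non-degeneracy argument (Lemma \hyperlink{Lemma 1}{1} plus Lemma \hyperlink{Lemma 6}{6} contradicting $d_{H}$-isolation) is a slightly more self-contained rendering of the same underlying fact, so there is nothing to add.
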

\begin{proof}
The three conditions are mutually exclusive. Assume (1) and (2) fail.
Then, by Theorem \hyperlink{Theorem 1}{1}, $H\mathcal{S}$ is the
union of $1\leq\beta\leq\aleph_{0}$ clopen generalized Peano continua
$X_{i\in\beta}$ and these are non-degenerate since $|X_{i}|=1$ implies
the unique $\varLambda\in X_{i}$ is a proper attractor (as (2) fails,
$\varLambda\subsetneq M$), which is impossible in the non-wandering
context (as minimal attractors are both stable and isolated, see \hyperlink{I}{(1)}
in Section \hyperlink{Section 2.2}{2.2}). Let $A_{i}$ be as in Theorem
\hyperlink{Theorem 1}{1}. Then, $x\in A_{i}$ implies $\emptyset\neq\omega(x)\in X_{i}\subset H\mathcal{S}$
and $x\in\omega(x)$, as $\omega(x)$ is stable and the flow is non-wandering
(Lemma \hyperlink{Lemma 2}{2.4}). Therefore, $X_{i}^{*}=A_{i}$ and
$X_{i}^{*}$ is as claimed.
\end{proof}
The following result shows that any cardinal limitation on the number
of compact minimal sets implies either the minimality of the flow
or the $d_{H}$ denseness in $\mbox{CMin }$of the unstable compact
minimal sets.
\begin{cor}
\hypertarget{Corollary 6}{}If the flow is non-wandering and \emph{$|\mbox{CMin}|<\mathfrak{c}$},
then either 
\begin{enumerate}
\item $H\mathcal{S}=\emptyset$ ~i.e.~ \emph{$\mbox{cl}_{H}\mathcal{U}=\mbox{CMin}$,
}or
\item $M$ is compact and the flow is minimal. 
\end{enumerate}
Actually, in case (1), the isolated (and thus Lyapunov unstable) compact
minimal sets are $d_{H}$ dense in \emph{$\mbox{CMin}$.}\end{cor}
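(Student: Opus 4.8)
The plan is to reduce the dichotomy to Theorem \hyperlink{Theorem 3}{3} and then to settle the addendum by a Cantor-scheme argument carried out inside a compact piece of the hyperspace. First I would invoke Theorem \hyperlink{Theorem 3}{3}, which already splits the non-wandering situation into the three cases $H\mathcal{S}=\emptyset$, $H\mathcal{S}=\{M\}$ (with $M$ compact and the flow minimal), or $H\mathcal{S}=\bigsqcup_{i}X_{i}$ with each $X_{i}$ a \emph{non-degenerate} generalized Peano continuum. The first two cases are precisely alternatives (1) and (2) of the corollary, so it suffices to show that $|\mbox{CMin}|<\mathfrak{c}$ excludes the third. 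But a non-degenerate generalized Peano continuum is arcwise connected and contains a nontrivial arc (Remark \hyperlink{Remark 1.c}{1.c}; see also the remarks following Theorem \hyperlink{Theorem 1}{1}), hence has at least $\mathfrak{c}$ points; thus in the third case $|\mbox{CMin}|\geq|X_{i}|\geq\mathfrak{c}$, a contradiction. This yields the stated dichotomy.

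Next I would justify the parenthetical ``and thus Lyapunov unstable'' in case (1). Here the flow cannot be both compact and minimal, for otherwise $M$ would be a compact minimal set, hence a stable, isolated — and therefore hyper-stable — minimal, giving $H\mathcal{S}=\{M\}\neq\emptyset$. Consequently every $\varLambda\in\mbox{CMin}$ is a proper subset of $M$. If such a $\varLambda$ were isolated and Lyapunov stable, it would be an attractor (by the preceding corollary, that an isolated stable minimal is an attractor); but a non-wandering flow admits no proper attractor, since minimal attractors are simultaneously stable and isolated and the only such set is $M$ itself (Definition \hyperlink{Definition 7}{7}). Hence every isolated compact minimal set is Lyapunov unstable.

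The substance is the density of the isolated minimals, and here I would show that $[\mbox{CMin},d_{H}]$ is \emph{scattered}: once this is known, every nonvoid $d_{H}$-open $V$ contains a point isolated in $V$, which, $V$ being open, is isolated in $\mbox{CMin}$, so the isolated minimals are dense. To prove scatteredness I would argue by contradiction, manufacturing $\mathfrak{c}$ distinct compact minimals from a nonvoid open, dense-in-itself $V\subseteq\mbox{CMin}$. Fix $\varLambda_{\emptyset}\in V$ and a compact neighbourhood $W$ of $\varLambda_{\emptyset}$ in $M$ large enough that every minimal $d_{H}$-close to $\varLambda_{\emptyset}$ lies in $W$; then $\mbox{Ci}(W)$ is $d_{H}$-compact (Remark \hyperlink{Remark 4}{4}). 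Since no point of $V$ is $d_{H}$-isolated, I would build a Cantor scheme $\{\bar{B}_{s}:s\in 2^{<\omega}\}$ of closed $d_{H}$-balls, nested along branches, with radii tending to $0$, each centred at a minimal of $V$ contained in $W$, and with the two children $\bar{B}_{s0},\bar{B}_{s1}$ chosen around \emph{distinct} (hence disjoint, compact) minimals $\varLambda_{s0}\neq\varLambda_{s1}$ whose radii are less than one third of the positive $M$-distance $d(\varLambda_{s0},\varLambda_{s1})$. Each branch $x\in 2^{\omega}$ then yields a $d_{H}$-Cauchy sequence of centres converging in $\mbox{Ci}(W)$ to a nonvoid compact invariant set $\varLambda_{x}$ (invariance passes to $d_{H}$-limits), which therefore contains a minimal; and the radius/separation bookkeeping forces $\varLambda_{x}$ and $\varLambda_{y}$ into disjoint regions of $M$ for $x\neq y$, so their minimal subsets are disjoint, hence distinct. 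This exhibits $\mathfrak{c}$ distinct compact minimals, contradicting $|\mbox{CMin}|<\mathfrak{c}$.

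The main obstacle is exactly the last point: a $d_{H}$-limit of minimal sets need \emph{not} be minimal, so I cannot take the $\mathfrak{c}$ limits $\varLambda_{x}$ themselves as the required minimals. The remedy is to guarantee, through the choice of radii relative to $d(\varLambda_{s0},\varLambda_{s1})$, that incomparable nodes of the scheme carry $M$-neighbourhoods that remain disjoint; since distinct minimals are disjoint and every nonvoid compact invariant set contains a minimal, this disjointness upgrades the $\mathfrak{c}$ (possibly non-minimal) limits into $\mathfrak{c}$ genuinely distinct minimals. Verifying that the separation propagates down every level of the scheme is the one delicate step; everything else is routine once $\mbox{Ci}(W)$ furnishes a compact — hence complete — ambient space in which the Cauchy sequences of centres are guaranteed to converge.
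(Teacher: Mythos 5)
Your proof is correct, and the dichotomy part coincides with the paper's: Theorem \hyperlink{Theorem 3}{3} plus the observation that a non-degenerate generalized Peano continuum contains a nontrivial arc, hence $\mathfrak{c}$ points, rules out case (3) when $|\mbox{CMin}|<\mathfrak{c}$. The genuine divergence is in the addendum. The paper disposes of the density of isolated minimals in one line by citing Theorem 2 of \cite{TE}: $|\mbox{CMin}|<\mathfrak{c}$ forces $\mathfrak{M}_{10}=\emptyset$, whence the isolated minimals ($\mathfrak{M}_{1-6}$ in the terminology of that companion paper) are $d_{H}$-dense in $\mbox{CMin}$. You instead prove this from scratch, showing $[\mbox{CMin},d_{H}]$ is scattered via a Cantor scheme inside the compact hyperspace $\mbox{Ci}(W)$ of Remark \hyperlink{Remark 4}{4}, with the radius-versus-separation bookkeeping needed to turn the $\mathfrak{c}$ branch limits into $\mathfrak{c}$ pairwise disjoint compact invariant sets, each containing a minimal --- thereby correctly confronting the one real pitfall, namely that a $d_{H}$-limit of minimal sets need not be minimal. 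This costs more space but buys self-containedness (you use only the compactness of $\mbox{Ci}(W)$, invariance of $d_{H}$-limits, disjointness of distinct minimals, and the fact that nonvoid compact invariant sets contain minimals) and a strictly more general fact: your density argument nowhere uses that the flow is non-wandering, so it establishes that for \emph{any} $C^{0}$ flow on a locally compact metric space, $|\mbox{CMin}|<\mathfrak{c}$ implies the isolated compact minimals are $d_{H}$-dense in $\mbox{CMin}$, which is in effect the content of the external theorem the paper invokes. One minor simplification: your detour through attractors and the non-existence of proper attractors in non-wandering flows to justify ``isolated $\Rightarrow$ unstable'' in case (1) is sound but unnecessary; since $H\mathcal{S}=\emptyset$ means $\mbox{CMin}=\mbox{cl}_{H}\mathcal{U}$, a $d_{H}$-isolated $\varLambda$ has $\{\varLambda\}$ as a neighbourhood in $\mbox{CMin}$ meeting $\mathcal{U}$, so $\varLambda\in\mathcal{U}$ outright.
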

\begin{proof}
Case (3) of Theorem \hyperlink{Theorem 3}{3} implies that $|\mbox{CMin}|=\mathfrak{c}$,
thus $|\mbox{CMin}|<\mathfrak{c}$, implies that either (1) or (2)
holds. The last sentence is an immediate consequence of Theorem \hyperlink{Theorem 2}{2}
in \cite[p.248]{TE}: $|\mbox{CMin}|<\mathfrak{c}$ implies $\mathfrak{M}_{10}=\emptyset$,
thus $\mathfrak{M}_{1-6}$, the set of isolated compact minimals,
is $d_{H}$ dense in $\mbox{CMin}$.\end{proof}
\begin{rem}
Note that, in general, despite the abundance of recurrent points,
compact minimal sets may be extremely scarce in non-wandering flows
(e.g. the only $\varLambda\in\mbox{CMin}$ may be an equilibrium orbit).
Corollary \hyperlink{Corollary 6}{6} shows that this phenomenon may
only occur if isolated (unstable) compact minimal sets are $d_{H}$
dense in $\mbox{CMin}$.\end{rem}
\begin{thm}
\hypertarget{Theorem 4}{}(Local behaviour) Let $\varLambda$ be a
compact minimal set of a $C^{0}$ non-wandering flow on a generalized
Peano continuum $M$. Then, either
\begin{enumerate}
\item \emph{$\varLambda\in\mbox{cl}_{H}\mathcal{U}$}, or
\item $\varLambda=M$ i.e. $M$ is compact and the flow is minimal, or
\item there are arbitrarily small compact, connected, invariant neighbourhoods
$U$ of $\varLambda$ such that:

\begin{enumerate}
\item $U$ is the union of $\mathfrak{c}$ hyper-stable compact minimal
sets i.e. $U=H\mathcal{S}(U)^{*}$;
\item $H\mathcal{S}(U)$ is a (non-degenerate) Peano continuum.
\end{enumerate}
\end{enumerate}
\end{thm}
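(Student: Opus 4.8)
The plan is to bootstrap from the local picture already obtained for arbitrary $C^{0}$ flows (Theorem \hyperlink{Theorem 2}{2}) and then feed in the two features special to the non-wandering setting: that a proper stable compact invariant set cannot be isolated (Section \hyperlink{Section 2.2}{2.2}, case \hyperlink{I}{(1)}), and that a point whose $\omega$-limit is stable must lie in it (Lemma \hyperlink{Lemma 2}{2.4}). The first collapses the ``attractor'' alternative of Theorem \hyperlink{Theorem 2}{2} into $\varLambda=M$; the second promotes the (+)invariant neighbourhoods of Theorem \hyperlink{Theorem 2}{2} into genuinely invariant ones that are exact unions of minimal sets.

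First I would observe that the three alternatives are mutually exclusive, and dispatch case (1): if $\varLambda\not\in H\mathcal{S}$, then $\varLambda\in\mbox{cl}_{H}\mathcal{U}$ and we are done (Section \hyperlink{Section 2.3}{2.3}). So assume $\varLambda\in H\mathcal{S}$ and apply Theorem \hyperlink{Theorem 2}{2}, whose alternative (1) is now excluded, leaving (2) or (3).

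If Theorem \hyperlink{Theorem 2}{2}(2) holds, then $\varLambda$ is an attractor, hence stable and isolated: its basin $B^{+}(\varLambda)\in\mathcal{N}_{\varLambda}$ is open, invariant and meets no minimal other than $\varLambda$ (every point of it has $\varLambda$ as $\omega$-limit). But in a non-wandering flow a proper stable compact invariant set is bi-stable and therefore has infinitely many compact minimal sets in every neighbourhood (Section \hyperlink{Section 2.2}{2.2}, case \hyperlink{I}{(1)}), contradicting isolation unless $\varLambda=M$. Thus $M$ is compact and the flow is minimal, which is case (2).

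Finally, if Theorem \hyperlink{Theorem 2}{2}(3) holds, I would take the arbitrarily small compact, connected, (+)invariant neighbourhoods $U$ it provides: there $\mbox{CMin}(U)=H\mathcal{S}(U)$ is a non-degenerate Peano continuum and every $x\in U$ satisfies $\omega(x)\in H\mathcal{S}(U)$. Since $H\mathcal{S}(U)$ is a non-degenerate Peano continuum it has cardinality $\mathfrak{c}$, giving (3.b) and the count of $\mathfrak{c}$ minimals in (3.a). The one place the non-wandering hypothesis does real work is in proving $U=H\mathcal{S}(U)^{*}$: for $x\in U$ the minimal set $\omega(x)$ is stable, so Lemma \hyperlink{Lemma 2}{2.4} yields $x\in\omega(x)\subset H\mathcal{S}(U)^{*}$, while $H\mathcal{S}(U)^{*}\subset U$ is immediate. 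Consequently $U$ is a union of minimal sets, hence invariant rather than merely (+)invariant, and $U=H\mathcal{S}(U)^{*}$ remains $d_{H}$-arbitrarily small in $M$ as $H\mathcal{S}(U)$ shrinks, completing (3.a). I expect the main obstacle to be purely conceptual: recognizing that Lemma \hyperlink{Lemma 2}{2.4} is exactly what upgrades the one-sided conclusion $\omega(x)\in H\mathcal{S}(U)$ of the general case to the two-sided $x\in\omega(x)$, forcing $U$ to coincide with the union of its minimal sets.
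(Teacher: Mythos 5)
Your proposal is correct and takes essentially the same route as the paper's own proof: both deduce Theorem 4 from Theorem 2, using the non-existence of proper attractors in non-wandering flows (Section 2.2, case (1)) to collapse the attractor alternative into $\varLambda=M$, and Lemma 2.4 to upgrade the (+)invariant neighbourhoods $U$ of Theorem 2(3) into invariant sets with $U=H\mathcal{S}(U)^{*}$. The only difference is cosmetic: you spell out the isolation-versus-bi-stability contradiction for the attractor case, which the paper compresses into the citation ``there are no proper attractors.''
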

(see Fig. \hyperlink{Fig 1.2}{1.2}).
\begin{proof}
Theorem \hyperlink{Theorem 4}{4} is just Theorem \hyperlink{Theorem 2}{2}
in the non-wandering setting: (1) coincides in both theorems. If the
flow is non-wandering, then (2) of Theorem \hyperlink{Theorem 2}{2}
can hold iff $\varLambda=M$ since there are no proper attractors.
This is (2) of Theorem \hyperlink{Theorem 4}{4}. Finally, let $U\in\mathcal{N}_{\varLambda}$
be as in (3) of Theorem \hyperlink{Theorem 2}{2}. By (3.b) of Theorem
\hyperlink{Theorem 2}{2},
\[
x\in U\implies\omega(x)\in H\mathcal{S}(U)
\]
As the flow is non-wandering, this implies $x\in\omega(x)\subset U$
(Lemma \hyperlink{Lemma 2}{2.4}). Therefore, $U=H\mathcal{S}(U)^{*}$
is invariant and (3) holds.
\end{proof}
\hypertarget{Section 5}{}

\section{Global absence of Lyapunov instability}

\subsection{Dichotomy}

It is well know that transitive dynamical behaviour precludes the
existence of Lyapunov stable compact minimal sets $\varLambda\subsetneq M$.
At the other extreme of the spectrum are flows without Lyapunov unstable
compact minimals. \emph{If the phase space $M$ is compact}, this
imposes extremely strong dynamical constraints on the flow, the following
dichotomy holding:
\begin{thm}
\hypertarget{Theorem 5}{}(minimality or fragmentation into $\mathfrak{c}$
stable minimals) A $C^{0}$ flow $\theta$ without Lyapunov unstable
compact minimals on a Peano continuum $M$ is non-wandering. Actually,
every point is recurrent and the flow is either
\begin{enumerate}
\item minimal, or
\item partitions $M$ into $\mathfrak{c}$ Lyapunov hyper-stable compact
minimal sets, forming a non-degenerate Peano continuum (in the Hausdorff
metric). Every nonvoid open set $A\subset M$ intersects $\mathfrak{c}$
minimal sets.
\end{enumerate}
\end{thm}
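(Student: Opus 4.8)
The plan is to first convert the hypothesis $\mathcal{U}=\emptyset$ into $\mbox{CMin}=\mathcal{S}=H\mathcal{S}$ (so \emph{every} compact minimal is hyper-stable) and then to prove that every point is recurrent. I would fix $x\in M$ and use compactness of $M$ to get a nonvoid compact invariant set $\alpha(x)$; it contains a compact minimal set $\varGamma$, which is \emph{stable} by hypothesis. Since $\varGamma\subset\alpha(x)$ is stable, Lemma \hyperlink{Lemma 2}{2.3} forces $x\in\varGamma=\alpha(x)$. Thus $x$ lies in the minimal set $\varGamma$, whence $\omega(x)=\varGamma\ni x$: every point is recurrent and, in particular, the flow is \emph{non-wandering}. (This bootstrap does not itself use non-wandering, so there is no circularity in invoking Lemma \hyperlink{Lemma 2}{2.3}.) As a by-product, the minimal sets \emph{partition} $M$, i.e. $M=H\mathcal{S}^{*}=\bigcup_{\varLambda\in\mbox{CMin}}\varLambda$.

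Having established that the flow is non-wandering, I would feed it into Theorem \hyperlink{Theorem 3}{3}. Its case (1) is excluded because $H\mathcal{S}=\mbox{CMin}\neq\emptyset$; the minimal case $H\mathcal{S}=\{M\}$ gives alternative (1) of the present statement, and in case (3) the saturations $X_{i}^{*}$ of the components are disjoint, open and invariant with $\bigcup_{i}X_{i}^{*}=H\mathcal{S}^{*}=M$. Since $M$ is connected, this collapses to a \emph{single} component $X=H\mathcal{S}$ with $X^{*}=M$, so $M$ is partitioned into the (hyper-stable) minimal sets of a single non-degenerate generalized Peano continuum $H\mathcal{S}$.

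To upgrade $H\mathcal{S}$ to a \emph{compact} non-degenerate Peano continuum I would apply Lemma \hyperlink{Lemma 3}{3.4} with $U=M$: this is legitimate because $M$ is nonvoid, compact, connected and (+)invariant and $\mbox{CMin}(M)=H\mathcal{S}\subset H\mathcal{S}$. Lemma \hyperlink{Lemma 3}{3.4} then yields that $H\mathcal{S}$ is $d_{H}$ compact and connected, and together with local connectedness (Lemma \hyperlink{Lemma 4}{4}) this makes $H\mathcal{S}$ a Peano continuum. As it is non-degenerate, it has cardinality $\mathfrak{c}$ (a non-degenerate connected metric space has cardinality $\mathfrak{c}$), so $M$ is partitioned into exactly $\mathfrak{c}$ hyper-stable compact minimal sets, as claimed in alternative (2).

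For the local-density assertion, given a nonvoid open $B\subset M$ I would study the set $\pi(B)$ of minimal sets meeting $B$, noting that its complement in $H\mathcal{S}$ is $H\mathcal{S}(B^{c})$, the minimals contained in the closed set $B^{c}=M\setminus B$. A $d_{H}$-limit of minimals lying in a closed set again lies in that set and, by the compactness just proved, in $H\mathcal{S}$; hence $H\mathcal{S}(B^{c})$ is $d_{H}$ closed and $\pi(B)$ is $d_{H}$ open, and it is nonvoid since $\omega(x)\in\pi(B)$ for any $x\in B$. The proof then reduces to the purely topological fact that a nonvoid open subset of a non-degenerate Peano continuum has $\mathfrak{c}$ elements: it contains, by local connectedness, a non-degenerate connected open set, and a connected metric space with more than one point has cardinality at least $\mathfrak{c}$ (apply a distance function). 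I expect the only delicate points to be the verification that $\pi(B)$ is open and this final topological reduction; the dynamical heart, namely recurrence via Lemma \hyperlink{Lemma 2}{2.3} and the collapse to a single component, is short once the lemmas are lined up.
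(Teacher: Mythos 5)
Your proposal is correct and follows essentially the same route as the paper's proof: recurrence of every point via Lemma \hyperlink{Lemma 2}{2.3} applied to a compact minimal set inside $\alpha(x)$, then Lemma \hyperlink{Lemma 3}{3.4} with $U:=M$ combined with Theorem \hyperlink{Theorem 3}{3} to exhibit $H\mathcal{S}$ as a non-degenerate Peano continuum. Your density argument (showing the set of minimals meeting a nonvoid open $B$ is $d_{H}$ open and nonvoid in $H\mathcal{S}$) is only a cosmetic variant of the paper's more direct observation that the $\mathfrak{c}$ minimals $d_{H}$-close to the minimal through $x\in B$ all intersect $B(x,\epsilon)\subset B$; both rest on the same fact that a non-degenerate connected metric space has cardinality at least $\mathfrak{c}$.
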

Thus, in a certain sense, Lyapunov unstable compact minimal sets are
a vital ingredient of dynamical complexity and diversity of flows
on compact phase spaces.
\begin{proof}
As $\mathcal{U}=\emptyset$, $\mbox{CMin}=H\mathcal{S}$. Since $M$
is compact, for each $x\in M$, $\alpha(x)\neq\emptyset$ is compact
and thus contains a compact minimal set $\varGamma$. Necessarily,
$x\in\varGamma=\alpha(x)$, otherwise $\varGamma$ is unstable (Lemma
\hyperlink{Lemma 2}{2.3}). Thus, every $x$ belongs to some $\varGamma\in H\mathcal{S}$
and the flow is non-wandering, with all points recurrent. Since $H\mathcal{S}^{*}=M$,
if the flow is not minimal, then $|H\mathcal{S}|>1$, and by Lemma
\hyperlink{Lemma 3}{3.4} applied to $U:=M$, $H\mathcal{S}$ is $d_{H}$
compact and connected, hence by Theorem \hyperlink{Theorem 3}{3.3},
a non-degenerate Peano continuum (thus $|H\mathcal{S}|=\mathfrak{c}$).
It remains to prove the last sentence in (2). Let $x\in M$ and $\epsilon>0$.
We show that $B(x,\epsilon)$ intersects $\mathfrak{c}$ minimals:
$x$ belongs to some $\varLambda\in H\mathcal{S}$; as $H$$\mathcal{S}$
is a non-degenerate Peano continuum, there are $\mathfrak{c}$ distinct
minimals $\varGamma_{i\in\mathbb{R}}\in H\mathcal{S}$ such that $d_{H}(\varGamma_{i},\varLambda)<\epsilon$,
thus implying $\varGamma_{i}\cap B(x,\epsilon)\neq\emptyset$.\end{proof}
\begin{rem}
The last conclusion in (2) means that the ``local density'' of the
hyper-stable compact minimal sets is actually $\mathfrak{c}$ all
over $M$. 
\end{rem}

\begin{rem}
The same conclusion holds if the flow has no (-)unstable compact minimal
sets, as the time reversed flow $\phi(t,x)=\theta(-t,x)$ contains
no unstable compact minimal set and thus Theorem \hyperlink{Theorem 5}{5}
is valid for $\phi$, hence also for $\theta,$ as its conclusions
are preserved under time-reversal.
\end{rem}

\begin{rem}
This result, valid for arbitrary flows on compact (connected) manifolds,
is, in general, false when $M$ is noncompact (trivial counter-examples
include the flow on $\mathbb{R}^{n}$ generated by the vector field
$v(z)=-z$ and the parallel flows $\partial/\partial x_{i}$. If $M$
is noncompact, the flow may contain no minimal sets, compact or not,
see Inaba \cite{IN}, Beniere, Meigniez \cite{BE}). Actually, compactness
of $M$ plays the key role in the above proof. If $M$ is noncompact
(i.e. a noncompact generalized Peano continuum, e.g. a noncompact
manifold), then limit sets may be empty or noncompact and thus need
not contain compact minimal sets. This actually implies that, in the
noncompact setting, absence of Lyapunov unstable compact minimals
has no analogue constraining effect on the dynamical diversity and
complexity of the flow. In particular, $H\mathcal{S}$ may be empty
or discrete (i.e. every $\varLambda\in H\mathcal{S}$ may be an attractor).
Without entering into details, these flows may exhibit ``quite freely''
(on $M$ or on invariant noncompact submanifolds), an abundance of
dynamical phenomena e.g. non-minimal ergodic behaviour, absence of
minimal sets (compact and noncompact) etc.
\end{rem}
\hypertarget{Section 5.2}{}

\subsection{Fragmentation into $\mathfrak{c}$ hyper-stable minimal submanifolds}

In virtue of Theorem \hyperlink{Theorem 5}{5}, a non-minimal $C^{0}$
flow on a (connected) closed manifold $M$, displaying no unstable
minimal sets, partitions $M$ into $\mathfrak{c}$ hyper-stable minimal
sets $\{\varLambda_{i\in\mathbb{R}}\}$. If all these $\varLambda_{i}$
are submanifolds (necessarily closed and connected, of possibly non-fixed
codimension $\geq1$), then given the nature of the examples that
usually come to one's mind, it is tempting to ask if $[\mbox{CMin},d_{H}]=[\{\varLambda_{i\in\mathbb{R}}\},d_{H}]$\foreignlanguage{english}{
is itself a manifold (compact, connected, possibly with boundary).
For simplicity reasons, we will assume that the }flows has all orbits
periodic\foreignlanguage{english}{ and Lyapunov stable, hence $[\mbox{CMin},d_{H}]=[\mbox{Per},d_{H}]=H\mathcal{S}$.
In this particular case, the answer is positive in dimension 2 and
3 (Davermann \cite{D2,D4}, Davermann, Walsh \cite{D3}).}%
\footnote{Observe that the partitions (decompositions) of $M$ into closed submanifolds
we are considering are upper semicontinuous (usc) in the standard
sense \cite{B3,D1}, as required in \cite{D2,D3,D4}. This follows
immediately from Lemma \hyperlink{Lemma 1}{1}. They are in fact \emph{continuous}
in the standard sense \cite{B3}: $d(\varLambda_{n},\,\varLambda_{0})\rightarrow0$
implies $\varLambda_{n}\overset{d_{H}}{\longrightarrow}\varLambda_{0}$
(Lemmas \hyperlink{Lemma 1}{1} and \hyperlink{Lemma 6}{6}).%
}\foreignlanguage{english}{ }In dimension 2 this is actually straightforward,
using Gutierrez Smoothing Theorem \cite{GU} and Poincaré Index Theorem:
only the torus and the Klein bottle carry flows without equilibria.
For flows with all orbits periodic on $\mathbb{T}^{2},$ $[\mbox{Per},d_{H}]\simeq\mathbb{S}^{1}$
and on $\mathbb{K}^{2}$,\foreignlanguage{english}{ $[\mbox{Per},d_{H}]\simeq\mathbb{D}^{1}=[-1,1]$
(two $\mathbb{S}^{1}$-foliated Möbius bands glued along their boundaries$\implies$exactly
two 1-sided orbits).} The 3-dimensional case was established earlier
by Epstein \cite{E1}, assuming that the flows with all orbits periodic
is $C^{1}$ and $M$ orientable, with no a priori restrictions on
the Lyapunov's nature of the orbits (see Section \hyperlink{Section 2.5}{2.5}).
However, in the higher dimensions, the landscape changes radically:
there are $C^{\infty}$ periodic flows on closed $n$-manifolds (for
every $n\geq4$), for which $[\mbox{CMin},d_{H}]=[\mbox{Per},d_{H}]=H\mathcal{S}$
is a \emph{non}-manifold (see Example \hyperlink{Example 4}{4} below).

Cannon and Daverman \cite{CA} constructed remarkable examples of
periodic $C^{0}$ flows on $M=N\times\mathbb{S}^{1}$, $N$ any $C^{\infty}$
boundaryless $(n\geq3)$-manifold, on which every orbit is a wildly
embedded $\mathbb{S}^{1}$! By construction, these periodic flows
induce trivial circle bundles. As the fibres are wild in $M$, no
point of the bundle's base space $\varTheta$ has a neighbourhood
homeomorphic to $\mathbb{B}^{n}$, hence $H\mathcal{S}\simeq\varTheta$
is nowhere a manifold. These flows have nowhere an $n$-cell cross
section, and thus are nowhere topologically equivalent to $C^{1}$
flows, all this showing that $C^{0}$ dynamics harbours topological
phenomena unparalleled in the differentiable setting (even locally).
\begin{example}
\hypertarget{Example 4}{}The following construction provides examples
of $C^{\infty}$ periodic flows with all orbits Lyapunov stable, on
closed manifolds in all dimensions $n\geq4$, for which the space
of orbits $\mbox{Per}\simeq\varTheta$ is a \emph{non}-manifold. The
construction of the underlying tangentially orientable foliations,
which could hardly be simpler, was, essentially, kindly communicated
to us by Professor Robert Daverman \cite{D5}.

For $n\geq4$, let $f:\mathbb{S}^{n-1}\circlearrowleft$ act as the
orthogonal reflection on the north-south axis $[-p,p]$, $p=(0,0,\ldots,0,1)$
(this is the compactification of $\mathbb{R}^{n-1}\circlearrowleft:x\mapsto-x$).
The (semifree) $\mathbb{Z}/2\mathbb{Z}$ action determined by $f$
fixes $\pm p$ and the orbit space $\varTheta:=\mathbb{S}^{n-1}/f$
is homeomorphic to the topological suspension of $\mathbb{\mathbb{R}}P^{n-2}$.
As $n\geq4$, $\mathbb{R}P^{n-2}\not\hookrightarrow\mathbb{R}^{n-1}$
\cite{MA}, hence $\varTheta$ is a non-manifold (with singular points
$\pm p$). 

Let $F$ be the free $\mathbb{Z}/2\mathbb{Z}$ action on $\mathbb{S}^{1}\times\mathbb{S}^{n-1}$
which acts as the antipodal map on the 1st factor and as $f$ on the
2nd. Since the action is $C^{\infty},$ finite and free, the corresponding
orbit space $M=(\mathbb{S}^{1}\times\mathbb{S}^{n-1})/F$ is a connected,
$C^{\infty}$ closed $n$-manifold. The image of the circles $\mathbb{S}^{1}\times\{y\}$,
under the quotient map $h:\mathbb{S}^{1}\times\mathbb{S}^{n-1}\rightarrow M$,
are circles defining a tangentially orientable $C^{\infty}$ 1-foliation
of $M$, with all leaves Lyapunov stable. Now, starting with the periodic
vector field $(z_{1},z_{2})\mapsto(iz_{1},0)$ on $\mathbb{S}^{1}\times\mathbb{S}^{n-1}$,
we get, via the quotient map, a $C^{\infty}$ vector field on $M$,
tangent to the resulting foliation. Thus $H\mathcal{S}=\mbox{Per}$
is the space of leaves, which, by construction, is homeomorphic to
$\varTheta$, a non-manifold (see above). Also, trivially, the flow
is periodic with period $2\pi$ (identifying $\mathbb{S}^{1}$ with
$\mathbb{R}/2\pi\mathbb{\mathbb{Z}}$), the two orbits corresponding
to the image (under the quotient) of each circle $\mathbb{S}^{1}\times\{\pm p\}$
have minimal period $\pi$. All other orbits have minimal period $2\pi$.
\end{example}
\hypertarget{Section 6}{}

\section{Final remarks. Open questions}

Assuming the phase space $M$ of the flow to be a (generalized) Peano
continuum, the topological characterization of $H$$\mathcal{S}$
given by Theorem \hyperlink{Theorem 1}{1} is optimal: it is easily
seen that if a metric space $\mathfrak{M}$ is the union of countably
many disjoint, clopen, generalized Peano continua, then there is a
$C^{0}$ flow on a Peano continuum for which $H\mathcal{S}\simeq\mathfrak{M}$.
We sketch the proof in the case $\mathfrak{M}$ is noncompact and
has denumerably many components (the other cases are easier).

Let $\mathfrak{M}=\sqcup_{i\in\mathbb{N}}X_{i}$, where each $X_{i}$
is a nonvoid, clopen, generalized Peano continuum. For each $i\in\mathbb{N}$,
take a $C^{0}$ flow $\theta_{i}$ on $\mathbb{D}_{i}^{1}=[-1_{i},1_{i}]\simeq\mathbb{D}^{1}$,
with (exactly) three equilibria $-1_{i}$, $0_{i}$, $1_{i}$, $\{0_{i}\}$
a repeller. From each $X_{i}$ select a point $z_{i}$. Connect $z_{i}$
to $z_{i+1}$ pasting $-1_{i}$ to $z_{i}$ and $1_{i}$ to $z_{i+1}$
(the $\mathbb{D}_{i}^{1}$'s are disjoint, except that the pasting
induces the identification $1_{i}\equiv-1_{i+1}$). Define the $C^{0}$
flow $\theta$ on $\mathfrak{N}=\mathfrak{M}\cup\cup_{i\in\mathbb{N}}\mathbb{D}_{i}^{1}$,
which coincides with $\theta_{i}$ on $\mathbb{D}_{i}^{1}$ and has
each $x\in\mathfrak{M}$ has an equilibrium. $\mathfrak{N}$ is a
noncompact generalized Peano continuum and thus has an 1-point compactification
$\mathfrak{N}^{\propto}=\mathfrak{N}\sqcup\{0_{\infty}\}$, which
is a Peano continuum (see (2) in the proof of Lemma \hyperlink{Lemma 5}{5}).
The flow $\theta$ automatically extends to a $C^{0}$ flow on $\mathfrak{N}^{\propto}$,
with $0_{\infty}$ becoming an equilibrium. Now, let $\phi$ be a
$C^{0}$ flow on $\mathbb{D}^{1}$ with (exactly) two equilibria,
$-1$ and $1$, $\{-1\}$ an attractor. Paste $-1$ to $z_{1}$ and
$1$ to $0_{\infty}$. This defines a $C^{0}$ flow on the Peano continuum
\[
M:=\mathfrak{N}^{\propto}\cup\mathbb{D}^{1}
\]
 with $\mathcal{S}=H\mathcal{S}=\big\{\{x\}:\, x\in\mathfrak{M}\big\}\simeq\mathfrak{M}$
and $\mathcal{U}=\big\{\{0_{i}\}:\, i\in\mathbb{N}\sqcup\{\infty\}\big\}$.\medskip{}

A more difficult question is the following:
\begin{problem}
Assuming that $\mathfrak{M}$ (see above) is finite dimensional,%
\footnote{Every $n$-dimensional, separable metric space embeds in $\mathbb{R}^{2n+1}$,
hence in any $(2n+1)$-manifold (Menger-Nöbeling-Hurewicz Theorem,
see e.g. \cite[p.60]{HU}).%
}

when is $\mathfrak{M}$ homeomorphic to the $H\mathcal{S}$ set of
some flow on a manifold?
\end{problem}
We restrict our attention to the simpler problem:
\begin{problem}
\hypertarget{Hypothesis}{}If $K\subset\mathbb{S}^{n}$ is a Peano
continuum, under which conditions is there a flow on $\mathbb{S}^{n}$
such that:

(a) each point $x\in K$ is an equilibrium and

(b) $H\mathcal{S}=\big\{\{x\}:x\in K\big\}\simeq K$~?
\end{problem}
For $n=2$, such a flow exists iff $K^{c}$ has finitely many components,
and it can be made of class $C^{\infty}$ (if $\mathbb{S}^{2}$ is
replaced by any compact manifold, it is easily seen that this condition
remains necessary for the existence of such a flow, even of class
$C^{0}$, see \hyperlink{(B)}{(B)} below). Hence the answer is positive,
for example, if $K$ is homeomorphic to Wazewski's universal dendrite
(\cite[p.181]{NA},\cite[p.12]{CH}), but negative if it is homeomorphic
to Sierpinski's universal plane curve (``Sierpinski's carpet'',
\cite[p.9]{NA},\cite[p.31]{CH},\cite[p.160]{SA}). Our existence
proof relies heavily on Riemann Mapping Theorem.

\begin{figure}
\hypertarget{Fig 6.1}{}

\begin{centering}
\includegraphics[scale=0.85]{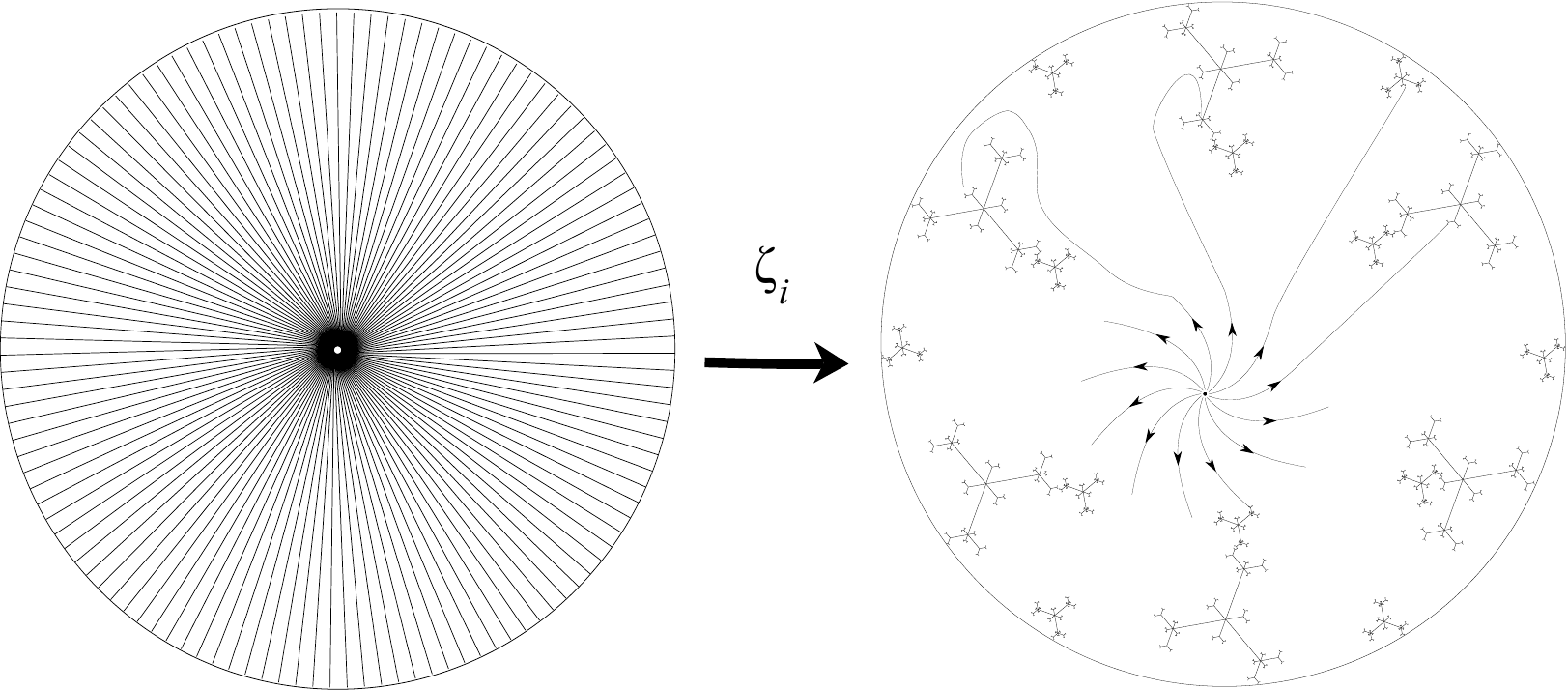}
\par\end{centering}

\caption{{\small{Riemann Mapping Theorem operating the miracle}}.}

\end{figure}

\textbf{Synopsis. }(\emph{Existence}) excluding trivialities, suppose
$K\subsetneq\mathbb{S}^{2}$ is a non-degenerate Peano continuum.
Through topology, each component $A_{i}$ of $K^{c}$ is simply connected,
hence there is a biholomorphism $\zeta_{i}:\mathbb{B}^{2}\rightarrow A_{i}$
(Riemann Mapping Theorem). We use this to put a $C^{\infty}$ vector
field $v_{i}$ on each $A_{i}$, with a repelling equilibrium $O_{i}$,
such that $A_{i}$ is its repulsion basin and $v_{i}$ extends to
the whole $\mathbb{S}^{2}$, letting $v_{i}=0$ on $A_{i}^{c}$. Define
$v=\sum v_{i}$ on $\mathbb{S}^{2}$. Clearly $v|_{A_{i}}=v_{i}$
and $\mbox{CMin}=\big\{\{x\}:\, x\in K\big\}\sqcup\big\{\{O_{i}\}\big\}_{i}$.
By topology again, $\mbox{bd\,}A_{i}$ is locally connected, hence
$\zeta_{i}$ extends continuously to $\zeta_{i}:\mathbb{D}^{2}\rightarrow\overline{A_{i}}$.
This ensures that, for each $p\in\mbox{bd\,}A_{i}\subset K$, $\{p\}$
is a stable equilibrium orbit, actually hyper-stable, since the unstable
minimals are the finitely many repellers $\{O_{i}\}$.
\begin{proof}
(A) \emph{Existence.} The cases $K=\emptyset$, $|K|=1$ and $K=\mathbb{S}^{2}$
are trivial. Assume that $K\subsetneq\mathbb{S}^{2}$ is non-degenerate
Peano continuum such that $K^{c}$ has finitely many components $\{A_{i}\}$.
Identify $\mathbb{R}^{2}$ with $\mathbb{C}$ and $\mathbb{S}^{2}$
with the Riemann sphere $\mathbb{C}\cup\{\infty\}$.

(1)\emph{ Claim. Each $A_{i}$ is biholomorphic to }$\mathbb{B}^{2}=\mbox{int\,}\mathbb{D}^{2}$:
let $\gamma$ be an $\mathbb{S}^{1}$ embedded in $A_{i}$. By Schoenflies
Theorem, we may reason as if $\gamma$ is the standard equator $\mathbb{S}^{1}\times\{0\}$.
Being connected and disjoint from $\gamma$, $K$ is contained in
one open hemisphere, say the north one. Then, the closed south hemisphere
is contained in $A_{i}$ (being a connected subset of $K^{c}$ containing
$\gamma\subset A_{i}$), hence $\gamma$ is contractible to a point
inside $A_{i}$. Therefore, as $A_{i}\neq\emptyset$ is open, simply
connected and $|A_{i}^{c}|>1$, there is a biholomorphic map $\zeta_{i}:\mathbb{B}^{2}\rightarrow A_{i}\subset\mathbb{S}^{2}$
(Riemann Mapping Theorem).

(2) as $K$ is Peano continuum, so is $\mbox{bd\,}A_{i}\subset K$
(\cite{KU}). This implies (\cite[p.18]{PO}) that $\zeta_{i}$ (uniquely)
extends to a $C^{0}$ map $\zeta_{i}:\mathbb{D}^{2}\rightarrow\overline{A_{i}}$.
It is easily seen that $\zeta_{i}$ maps $\mathbb{S}^{1}=\mbox{bd\,}\mathbb{D}^{2}$
onto $\mbox{bd\,}A_{i}=\overline{A_{i}}\setminus A_{i}$ (in general
not injectively).

(3) Take $\lambda\in C^{\infty}(\mathbb{R}^{2},[0,1])$ such that
$\lambda^{-1}(0)=(\mathbb{B}^{2})^{c}$. Let $\upsilon$ be the vector
field $\mathbb{R}^{2}\circlearrowleft:\, z\mapsto\lambda(z)z$. Transfer
$\upsilon|_{\mathbb{B}^{2}}$ to $A_{i}$ via $\zeta_{i}$, getting
$\upsilon_{i}=\zeta_{i_{*}}\upsilon|_{\mathbb{B}^{2}}\in\mathfrak{X}^{\infty}(A_{i})$.
By Kaplan Smoothing Theorem \cite[p.157]{KA}, there is $\mu_{i}\in C^{\infty}(\mathbb{S}^{2},[0,1])$
such that $\mu_{i}^{-1}(0)=A_{i}^{c}$ and 
\[
\begin{array}{lllc}
v_{i}:\mathbb{S}^{2} & \longrightarrow & \mathbb{R}^{3}\\
\quad\;\,\, z & \longmapsto & \mu_{i}\upsilon_{i}(z) & \mbox{ on \,}A_{i}\\
\quad\;\,\, z & \longmapsto & 0 & \mbox{ on \,}A_{i}^{c}
\end{array}
\]
defines a $C^{\infty}$ vector field on $\mathbb{S}^{2}$, whose restriction
to $A_{i}$ is topologically equivalent to $\upsilon|_{\mathbb{B}^{2}}$
via $\zeta_{i}$. Let $v=\sum v_{i}\in\mathfrak{X}^{\infty}(\mathbb{S}^{2})$.
Note that $v|_{A_{i}}=v_{i}.$ Its set of equilibria is $K\sqcup\{O_{i}\}_{i}$,
where $O_{i}=\zeta_{i}(0)$. The corresponding equilibrium orbits
are the only minimal sets of the flow $v^{t}$. Each $\{O_{i}\}$
is a repeller and $A_{i}$ its repulsion basin. For each $O_{i}\neq z\in A_{i}$,
$\alpha(z)=\{O_{i}\}$ and $\omega(z)=\{p\}$, for some $p\in\mbox{bd\,}A_{i}$.
Since $\zeta_{i}:\mathbb{S}^{1}\rightarrow\mbox{bd\,}A_{i}$ is onto,
every equilibrium $p\in\mbox{bd\,}A_{i}$ is the $\omega$-limit of
at least one $z\in A_{i}$ (Fig. \hyperlink{Fig 6.1}{6.1}).

(4) \emph{Claim.} $H\mathcal{S}=\big\{\{x\}:\, x\in K\big\}$. 

We show that each $y\in K$ has arbitrarily small (+)invariant neighbourhoods.
If $y\in\mbox{int\,}K$, this is obvious since $y$ has a neighbourhood
consisting of equilibria (see 3). Otherwise, given $\epsilon>0$,
we get a (+)invariant neighbourhood $D_{i}\subset B(y,\epsilon)$
of $y$ in $\overline{A_{i}}$, for each component $A_{i}$ of $K^{c}$
such that $y\in\mbox{bd\,}A_{i}$. Then, as the number of components
is finite, $\big(B(y,\epsilon)\cap K\big)\cup(\cup D_{i})\subset B(y,\epsilon)$
is a (+)invariant neighbourhood of $y$ in $\mathbb{S}^{2}$. 

Suppose $y\in\mbox{bd\,}A_{i}$. As $\zeta_{i}:\mathbb{D}^{2}\rightarrow\overline{A_{i}}$
is $C^{0}$, $\beta_{i}=\zeta_{i}^{-1}(y)\subset\mathbb{S}^{1}$ is
compact and $B_{i}=\zeta_{i}^{-1}\big(B(y,\epsilon)\cap\overline{A_{i}}\big)$
is an open neighbourhood of $\beta_{i}$ in $\mathbb{D}^{2}$. For
each $x\in\beta_{i}$, take a ``conic'' open, (+)invariant neighbourhood
$C_{x}$ of $x$ in $\mathbb{D}^{2}$, contained in $B_{i}$ ($\mathbb{D}^{2}$
is invariant under the flow $\upsilon^{t}$). Let $C_{i}$ be a finite
union of $C_{x}$'s covering $\beta_{i}$. Then, $\big(B(y,\epsilon)\cap K\big)\cup\big(\cup_{y\in\mbox{bd\,}A_{i}}\zeta_{i}(C_{i})\big)$
is a (+)invariant neighbourhood of $y$ in $\mathbb{S}^{2}$, contained
in $B(y,\epsilon)$. Therefore $\{y\}\in\mathcal{S}$. The only other
minimals are the finitely many repellers $\{O_{i}\}$, which are necessarily
away from $\mathcal{S},$ hence $H\mathcal{S}=\mathcal{S}=\big\{\{x\}:\, x\in K\big\}$.

\hypertarget{(B)}{}(B) Finally, we prove that if $K^{c}$ has infinitely
many components, then there is no such flow (even of class $C^{0}$).
Reasoning by contradiction, suppose there is such a flow. Let $\{A_{n}\}_{n\in\mathbb{N}}$
be the distinct components of $K^{c}$.

\emph{Claim. Each open invariant set $A_{n}$ contains an unstable
minimal set}:\emph{ }let $z\in A_{n}$. As $\mathbb{S}^{2}$ is compact,
$\alpha(z)\neq\emptyset$ is compact and thus contains a minimal set
$\varLambda_{n}$. Clearly, $\varLambda_{n}\subset A_{n}$, otherwise
$\varLambda_{n}\subset\mbox{bd\,}A_{n}\subset K$ which implies $\varLambda_{n}=\{x\}$,
for some $x\in K$ (by hypothesis \hyperlink{Hypothesis}{(a)}, each
$x\in K$ is an equilibrium) and $\{x\}$ is unstable (Lemma \hyperlink{Lemma 2}{2.3}),
contradicting hypothesis \hyperlink{Hypothesis}{(b)}. By hypothesis
(b), $\varLambda_{n}\not\in H\mathcal{S}$ i.e. $\varLambda_{n}\in\mbox{cl}_{H}\mathcal{U}$.
Take $\varGamma_{n}\in\mathcal{U}$ sufficiently $d_{H}$ near $\varLambda_{n}$
so that it is contained in $A_{n}$.

Now, by Blaschke Principle (\cite[p.223]{TE}), $\varGamma_{n}$ has
a subsequence $\varGamma_{n'}$ $d_{H}$ converging to some nonvoid,
compact invariant set $\varGamma\subset\mathbb{S}^{2}$. As the $A_{n}$'s
are disjoint, open and invariant, $\varGamma\subset K$. But then,
for each $x\in\varGamma\subset K$, $\{x\}\in\mbox{cl}_{H}\mathcal{U}=H\mathcal{S}^{c}$,
contradicting $\{x\}\in H\mathcal{S}$ (hypothesis (b)): this is obvious
if $\varGamma=\{x\}$, as $\mathcal{U}\ni\varGamma_{n'}\overset{d_{H}}{\longrightarrow}\varGamma$.
Otherwise $\{x\}\subsetneq\varGamma$, thus $\{x\}\in\mathcal{U}$
(Lemma \hyperlink{Lemma 2}{2.1}). \end{proof}

\selectlanguage{english}%
{\small{\bigskip{}
Centro de Matemática da Universidade do Porto }}\\
{\small{Rua do Campo Alegre, 687, 4169-007 Porto, Portugal}}\\
{\small{E-mail:}}\emph{\small{ pedro.teixeira@fc.up.pt}}\selectlanguage{british}%

\end{document}